\documentclass{article}
 
\usepackage{lineno}
\usepackage{verbatim,amsmath,amsfonts,epsfig,graphics,setspace,amssymb, amsbsy, float,stmaryrd, multirow}
\usepackage{caption}
\usepackage{epsfig}
\usepackage{enumerate,color}
\usepackage{url}
\usepackage{algorithm}
\usepackage{algorithmic}
\usepackage{amsthm}
\usepackage{tikz}

\usepackage[numbers,sort&compress]{natbib}
\bibpunct{[}{]}{,}{n}{,}{--}

\newcounter{rmrk}[section]
\newenvironment{rem}{\stepcounter{rmrk}
\noindent {\newline \bf Remark
\arabic{section}.\arabic{rmrk}.\ }}{\newline}

\newtheorem{lemma}{Lemma}[section]
\newtheorem{corollary}{Corollary}[section]
\newtheorem{theorem}{Theorem}[section]

\newcommand{\p}{\mathbb{P}}

\newcommand{\q}{\mathbb{Q}}

\newcommand{\B}{\mathcal{B}}

\newcommand{\DG}{\text{DG}}

\newcommand{\pspace}{\Lambda}
\newcommand{\dspace}{\mathcal{D}}

\newcommand{\pborel}{\mathcal{B}_{\pspace}}

\newcommand{\dborel}{\mathcal{B}_{\dspace}}

\newcommand{\initmeas}{P_{\text{init}}}
\newcommand{\initmeasomega}{P_{\text{init}, \omega}}
\newcommand{\updatemeas}{P_{\text{up}}}
\newcommand{\initdens}{\pi_{\text{init}}}
\newcommand{\updatedens}{\pi_{\text{up}}}
\newcommand{\predictmeas}{P_\text{pred}}
\newcommand{\predictdens}{\pi_\text{pred}}
\newcommand{\obsmeas}{P_{\text{obs}}}
\newcommand{\obsdens}{\pi_{\text{obs}}}
\newcommand{\obsmeasi}{P_{\text{obs}, i}}

\newcommand{\expnumber}[2]{{#1}\mathrm{E}{#2}}

\DeclareMathOperator*{\argmin}{arg\,min}

\title{Iterative Data-Consistent Inversion with Multiple Push-forward Constraints}
\author{Tianyi Jiang\thanks{Department of Statistics, Colorado State University, Fort Collins, CO 80523} \and
Troy Butler\thanks{Department of Mathematical and Statistical Sciences, University of Colorado Denver, Denver, CO 80202 ({\tt Troy.Butler@ucdenver.edu})}
  \and
    Timothy Wildey\thanks{Optimization and Uncertainty Quantification Department, Center for Computing Research, Sandia National Labs, Albuquerque, NM 87185.}
    \and
    Tim Kutta\thanks{Department of Mathematics, Aarhus University, Denmark}
    \and 
    Haonan Wang\thanks{Department of Statistics, Colorado State University, Fort Collins, CO 80523}}
\date{{\normalsize This Draft: \today}}

\begin{document}

\maketitle

\begin{abstract}
A foundational challenge in uncertainty quantification involves estimating a probability measure on the space of uncertain parameters such that its push-forward through a computational model matches an observed probability measure on the output data associated with quantities of interest (QoI). When multiple, distinct sets of observational data are available, the desired parameter measure should simultaneously satisfy multiple push-forward constraints associated with various subsets of the QoI. In this work, we present a convergent measure-theoretic framework for solving this problem based on an iterative application of Data-Consistent Inversion (DCI). We first rigorously establish the theoretical optimality of the DCI solution to the standard problem, proving that it minimizes the $f$-divergence over the space of all possible pullback measures that satisfy the push-forward constraint. This optimality property provides the foundation for our iterative DCI scheme, which is shown to converge to a solution of the multiple push-forward constraint problem. This iterative solution minimizes the cumulative $f$-divergence across all constraints and, under uniform initializations, represents the maximal entropy solution (the I-projection) onto the intersection of the solution sets. We provide a rigorous convergence analysis for the proposed method and demonstrate its practical utility through numerical examples, including a high-dimensional parameter space governed by partial differential equations, where the iterative approach robustly avoids the complexities associated with approximating high-dimensional joint observed measures.
\end{abstract}

%Label to \label{...}
%Refer to \ref{...}
%Refer to equation \eqref{...}

%%%%%%%%%%%%
\section{Introduction}
%%%%%%%%%%%%
% {\bf To-Do: Intro should both motivate and describe the stochastic inverse problem (SIP) and the generalized stochastic inverse problem (GSIP) considered in this work. This will lead to a high-level summary of the DCI literature that has developed solution methods for the SIP based on the disintegration theorem. This leads to the contributions of this work, which should then be clearly delineated including the theory of f-divergence optimality of the DCI solution to the SIP and the development and analysis of an iterative DCI approach for solving the GSIP. Numerical results both illustrate the theoretical results while also demonstrating the differences between the GSIP solution and the SIP solution.}

A key objective of modern computational science and engineering is the rigorous quantification of uncertainty in physics-based models, driven by the desire to produce data-informed predictions. 
Central to this effort is the formulation and solution of stochastic inverse problems (SIPs), which seek to constrain input uncertainties using observational data. 
The SIP is broadly defined in the context of this work as constructing a probability measure on an input space ($\Lambda$) of a measurable quantities of interest (QoI) map ($\phi$) that is subject to a push-forward constraint associated with some specified observed probability measure ($P_{\text{obs}}$) on the output space ($\dspace$). 
More formally, the solution is a probability measure $P$ on $\Lambda$ such that $P \circ \phi^{-1} = P_{\text{obs}}$. 
In other words, the solution is a pullback measure of the observed distribution, assigning the correct probability to events in the parameter space based on observed frequencies in the data space.
In that sense, the solution is naturally interpreted as being a data-generating distribution that is consistent with the observed output statistics and the assumed forward model. 

While the standard SIP considers a single (possibly vector-valued) QoI map, many practical applications involve conditioning parameter uncertainties on multiple, distinct observational datasets associated with separate QoI maps.
Such situations arise when different experimental campaigns lead to asynchronous data for which no joint structures are observed or known simultaneously across every QoI component from each QoI map. 
Additionally, in many realistic applications, even if joint structures were known across every QoI component, methods for estimating densities on such subspaces from finite data samples have practical dimension limitations that motivate a reformulation of the SIP on multiple, low-dimensional QoI subspaces.
This leads to the Generalized Stochastic Inverse Problem (GSIP), which requires finding a single probability measure $P$ that simultaneously satisfies $k$ distinct push-forward constraints, $P \circ \phi_i^{-1} = P_{\text{obs},i}$, for $i=1, \dots, k$, where $\phi_i$ denotes the $i$th (possibly vector-valued) QoI map and $P_{\text{obs},i}$ is the associated observed probability measure.
The GSIP presents unique challenges compared to the SIP, particularly because the full joint observed distribution across all output spaces ($\dspace_1 \times \cdots \times \dspace_k$ where $\dspace_i$ is the output space associated with $\phi_i$) is generally not assumed to be available or necessary for solution. 
Subsequently, solving the GSIP requires an approach that resolves these constraints individually and iteratively.

It is worth noting that the SIP and GSIP referred to in this work are distinct from the Bayesian formulation of parameter inference problems found in the uncertainty quantification literature. 
In a typical Bayesian framework, a common assumption is that of an additive noise model on the data that follows a given distribution, usually assumed to be Gaussian, which models the uncertainty associated with measurement errors.
Subsequently, the posterior distribution solving the Bayesian formulation of the problem is defined by a conditional density given by the product of a prior density on parameters and a data-likelihood function. 
The posterior is not a pullback measure. 
Instead, it is interpreted as defining the relative likelihoods that a fixed estimate for the parameters of interest could have produced all of the observed (noisy) data.
For more information on the Bayesian formulation and its solution, we direct the interested reader to~\cite{CKS14, Gelman2013, CDS10, KO2001, BMP+1994, Fitzpatrick1991, Burger_2014, PMS+14, APS+16}.
It is possible to construct hierarchical Bayesian methods that provide an alternative to the regular Bayesian framework to solve a similar problem to the SIP, e.g., see~\cite{Wikle1998}.
Hierarchical Bayesian methods commonly specify prior distributions from a parametric family of distributions.
Hyper-parameters are introduced as random variables into the inference process to describe the uncertainty in the parametric family of distributions used for the prior. 
Implementation of these approaches consequently require an increase in the number of samples that must be computed to obtain accurate inferences because samples must be drawn from a ``hierarchy'' of distributions \cite{pyMC3}.

To solve the SIP, we utilize the Data-Consistent Inversion (DCI) framework, which is a non-parametric measure-theoretic methodology rooted in the disintegration theorem \cite{BET+14, BJW18a, BBE24}. 
We utilize the density-based DCI terminology and notation in this work where one begins by specifying an initial density ($\initdens$) that quantifies uncertainties on the parameter space prior to the observation of QoI data.
A predicted density ($\predictdens$) is constructed as the density associated with the push-forward of the initial probability measure through the QoI map $\phi$. 
Given the observed density ($\obsdens$), the DCI solution, referred to as the updated density ($\updatedens$), is then given by the product of $\initdens(\lambda)$ and the ratio $\obsdens(\phi(\lambda)) / \predictdens(\phi(\lambda))$. 
The density-based DCI methodology has seen substantial theoretical development, establishing existence, uniqueness (up to choose of $\initdens$), stability, and convergence properties, e.g., see~\cite{BJW18a, BWY20, BSW+25}.

Despite the robustness of DCI for solving the standard SIP, solving the GSIP remains complex. 
Motivated alternating projection methods in convex optimization, we propose an iterative DCI approach. 
The foundation of this approach relies on demonstrating the optimality of the DCI solution, which naturally leads to the iterative scheme that is the focus of this work.

The main contributions of this work are summarized as follows:
\begin{enumerate}
    \item \textbf{Theoretical Optimality of DCI for SIP:} We prove that the DCI solution to the standard SIP, as defined by the disintegration theorem, is optimal in the sense that it minimizes the $f$-divergence (forward minimization) over the space of all pullback measures satisfying the push-forward constraint. Furthermore, we demonstrate that for the Kullback-Leibler (KL) divergence, the DCI solution satisfies both forward and backward minimization problems simultaneously under certain practical assumptions.
    \item \textbf{Development and Analysis of Iterative DCI for GSIP:} We introduce a novel iterative DCI procedure for solving the GSIP. This approach sequentially updates the parameter measure by cycling through the $k$ constraints, utilizing the DCI solution at each step as the new initial measure for the next iteration. We prove that this iterative sequence converges in total variation to a solution in the intersection of all constraint sets, provided the initial distribution satisfies mild predictability assumptions.
    \item \textbf{Maximal Entropy Solution and I-Projection:} We show that the converged solution of the iterative DCI scheme is the information projection (I-projection) of the initial distribution onto the GSIP solution set. 
    An immediate consequence is that if the procedure is initialized with a uniform distribution, the solution corresponds to the maximal entropy measure consistent with the $k$ marginal constraints.
    \item \textbf{Numerical Validation:} We provide numerical examples that illustrate the theoretical convergence properties of the iterative DCI algorithm. Critically, we demonstrate the computational utility of the GSIP solution, including in scenarios where traditional joint SIP approaches fail due to numerical challenges in approximating high-dimensional observational densities or using misspecified joint densities that may violate predictability assumptions.
\end{enumerate}

To help situate the main contributions of this work within the broader landscape of inverse problems, especially those based on entropy-regularized methods, we note some conceptual connections to classical methods.
Specifically, the iterative updates resemble adaptive/iterative importance sampling schemes, and the cycling over multiple marginal constraints is motivated by classical iterative proportional fitting. 
We also note that the convergence result identifying the limit as the I-projection also has a clear information-geometric interpretation: among all parameter measures that satisfy the push-forward constraints, the algorithm selects the one closest to the chosen reference measure $P_\text{init}$. 
When $P_\text{init}$ is uniform on the feasible set, this provides a maximum-entropy characterization under the imposed constraints.

The rest of this manuscript is organized as follows.
Section~\ref{sec:DCI_summary} provides background on DCI and summarizes the necessary notation and terminology while also providing citations to relevant literature including to comparisons of DCI and Bayesian methodologies. 
Section~\ref{sec:f-divergence-solns} relates DCI solutions and $f$-divergences including the theoretical proofs of the optimality of DCI solutions.
The GSIP is formally defined in Section~\ref{sec:GSIP}.
The iterative approach for solving the GSIP is developed and analyzed in Section~\ref{sec:iterative_DCI}.
Numerical results follow in Section~\ref{sec:numerics}.
Conclusions and future research directions are discussed in Section~\ref{sec:conclusions}.
Instructions on how to obtain the supplementary material containing the data and scripts used to generate the results in this work are found in Section~\ref{sec:supplementary}. 
Section~\ref{sec:acknowledgments} contains acknowledgment of funding sources for this research.

\section{Data-Consistent Inversion (DCI): Background, Notation, and Terminology}\label{sec:DCI_summary}

% {\bf To-Do: \sout{Define the SIP and add high-level summary of DCI that introduces the initial, predicted, observed, and updated notation/terminology in the context of a pullback for a single (possibly vector-valued) map $\phi$. This is in contrast to the next sections that define the generalized SIP and iterative framework for solving it.} Need to update the bibliography as well.}

The {\em stochastic inverse problem (SIP)} of this work refers, broadly, to the problem of constructing a probability measure on the input space of a map that is subject to a push-forward constraint associated with some specified observed probability measure on the output space. 
More precisely, given separable, complete metric spaces $\pspace\subset\mathbb{R}^p$ and $\dspace\in\mathbb{R}^d$ that are equipped with Borel $\sigma$-algebras $\pborel$ and $\dborel$ (resp.), a measurable map $\phi:\pspace\to\dspace$, and an observed probability measure $\obsmeas$ on $(\dspace,\dborel)$, the SIP is defined as finding a probability measure $P$ on $(\pspace,\pborel)$ subject to the push-forward constraint
\begin{equation}\label{eq:data-consistency}
    P\circ\phi^{-1}(A) = \obsmeas(A), \ \forall \, A\in\dborel.
\end{equation}
In other words, the solution to the SIP is a pullback measure of $\obsmeas$.
The push-forward constraint given by~\eqref{eq:data-consistency} is sometimes referred to as the data-consistency constraint since, for a given $A\in\dborel$, the solution must assign the same probabilities to events $\phi^{-1}(A)\in\pborel$ as observed in the data space. 

To solve the SIP, we utilize the data-consistent inversion (DCI) framework~\cite{BET+14, BJW18a, BBE24}.
In recent years, the density-based approximation of the DCI solution, as derived in \cite{BJW18a} via the Disintegration Theorem \cite{changandpollard1997}, has seen the most development, analysis, and application, e.g., see~\cite{ZM2023, RPK+23, MSB+22, tran2021solving, BGW2020, FBB25, BH20}.
A similar form of the density-based DCI solution was separately derived in \cite{PR2000} through heuristic arguments based on logarithmic pooling and referred to as ``Bayesian melding.'' 
To help distinguish this methodology from classical Bayesian inference, the DCI terminology, which is utilized in this work, was introduced in \cite{BJW18b}.
In that work, an initial and predicted density (denoted by $\initdens$ and $\predictdens$, resp.) are used to describe the initial quantification of uncertainties on $(\pspace,\pborel)$ and $(\dspace,\dborel)$ (resp.) independent of any observed data.
An observed density, denoted by $\obsdens$, then describes the quantification of uncertainty for the observed output data.
The DCI solution is then obtained via the product of $\initdens$ with the ratio of $\obsdens$ to $\predictdens$ evaluated at $\phi(\lambda)$ for each $\lambda\in\pspace$. 
We refer to this solution as the updated density and denote it by $\updatedens$.
We therefore focus the rest of this section on summarizing, briefly, the precise mathematical content of \cite{BJW18a} with the terminology and notation described above to set the stage for the contributions of this current work.
For a thorough comparison of DCI and Bayesian frameworks and methods, we direct the interested reader to Sections 2 and 4 of \cite{PCT+23}, Section 7 in \cite{BJW18a}, Sections 1 and 2 of \cite{BWY20}, and to a recent review paper \cite{BBE24}.

As previously mentioned, the DCI solution to the SIP relies upon the disintegration theorem \cite{changandpollard1997}. 
This is a critical theoretical result that provides a rigorous framework to decompose a probability measure through a measurable function, and helps construct solutions at each step in the iterative algorithm that we develop in this work. 
Given a probability $P^*$ on $(\Lambda,\B_{\Lambda})$ and a measurable mapping $\phi: \Lambda \rightarrow \mathcal{D}$, the disintegration theorem allows us to disintegrate $P^*$ as
\begin{equation} \label{disintegration}
    P^*(A) = \int_{\mathcal{D}}P^*_{\omega}(A) dQ^{*}(\omega), \ \forall \, A\in B_\Lambda,
\end{equation}
where $Q^{*}:= P^{*}\circ\phi^{-1}$ is the push-forward measure of $P^*$.
Here, the $\{P^*_\omega\}_{\omega \in \mathcal{D}}$ represent the {\em disintegration} of $P^*$ into a $Q^*$-a.e.~uniquely defined family of probability measures representing conditional probability measures defined on $\phi^{-1}(\omega)$ for a.e.~$\omega\in\mathcal{D}$, i.e., $P^*_\omega(A) = P^*_\omega(A\cap \phi^{-1}(\omega))$.

Let $\initmeas$ denote a probability measure on $(\Lambda,\B_\Lambda)$ and $\predictmeas:=\initmeas\circ \phi^{-1}$ denote its push-forward measure on $(\mathcal{D}, \B_\dspace)$, then the disintegration of $\initmeas$ yields
\begin{equation}\label{eq:disintegrate_initial}
    \initmeas(A) = \int_\dspace \initmeasomega(A)\, d\predictmeas(\omega), \ \forall \, A\in B_\pspace.
\end{equation}
It is shown in \cite{BET+14} that for a given $P_\text{obs}$ on $(\mathcal{D},\B_\mathcal{D})$ and $P_\text{init}$ on $(\Lambda,\B_\Lambda)$ (referred to as an ansatz in that work), the DCI solution is given by 
\begin{equation} \label{integration}
    P_\text{up}(\cdot) := \int_{\mathcal{D}} P_{\text{init}, \omega}(\cdot) dP_\text{obs}(\omega).   
\end{equation}

We further assume that $\initmeas, \predictmeas,$ and $\obsmeas$ are all absolutely continuous with respect to their associated dominating measures (denoted by $\mu_\Lambda$ on $(\Lambda,\B_\Lambda)$ and $\mu_\mathcal{D}$ on $(\mathcal{D}, \B_\mathcal{D})$) so that they admit Radon-Nikodym derivatives denoted by $\initdens$, $\predictdens$, and $\obsdens$, respectively. 
Following \cite{BJW18a}, if there exists a constant $C>0$ such that $\obsdens(\omega)\leq C\predictdens(\omega)$ for a.e.~$\omega$ (referred to as the predictability assumption), then the Radon-Nikodym derivative of the updated measure is shown to exist in the form given by
\begin{equation}\label{eq:updated_density}
    \updatedens(\lambda) = \initdens(\lambda)\frac{\obsdens(\phi(\lambda))}{\predictdens(\phi(\lambda))}.
\end{equation}
Subsequently, the evaluation of $\updatemeas(A)$ for $A\in\B_\Lambda$ can be rewritten either as
\begin{equation}\label{eq:density-based-DCI}
    \updatemeas(A) = \int_\mathcal{D} \int_{A\cap \phi^{-1}(\omega)} \initdens(\lambda)\frac{\obsdens(\phi(\lambda))}{\predictdens(\phi(\lambda))}\, d\mu_{\Lambda,\omega}(\lambda)\,  d\mu_\mathcal{D}(\omega),
\end{equation}
or as
\begin{equation}\label{eq:density-based-DCI-factored}
    \updatemeas(A) = \int_\mathcal{D} \left(\int_{A\cap \phi^{-1}(\omega)} \initdens(\lambda)\, d\mu_{\Lambda,\omega}(\lambda)\right) \frac{\obsdens(\omega)}{\predictdens(\omega)}\,  d\mu_\mathcal{D}(\omega).
\end{equation}
This later form emphasizes that the DCI solution is conceptually defined as a re-weighting of the initial measure, but only in the directions of $\Lambda$ for which $\phi$ varies.
While~\eqref{eq:updated_density}-\eqref{eq:density-based-DCI-factored} are all presented for Radon-Nikodym derivatives, we generally refer to these as representing the ``density-based'' DCI solution.

We conclude this section with a few remarks on the practical implementation of the density-based DCI solution and verification 
% \tmw{Troy - is this verification or validation?  We changed it at some point.} @Tim: I believe verification is the correct term here because we are verifying the conditions hold to apply the framework. I checked against the resposne an AI gave me to this prompt ``do you verify or validate an assumption holds to apply a theorem'' just to make sure I am not confusing the terms. 
of the predictability assumption.
First, we rewrite~\eqref{eq:updated_density} as 
\begin{equation}\label{eq:updated_density_r}
    \updatedens(\lambda) = \initdens(\lambda) r(\lambda), \ \text{ where } \ r(\lambda) := \frac{\obsdens(\phi(\lambda))}{\predictdens(\phi(\lambda))}.
\end{equation}
For a given set of $N$ independent identically distributed (iid) samples $\{\lambda^{(i)}\}\sim\initdens$, we refer to the corresponding $\{r(\lambda^{(i)})\}$ values as simply the ``$r$-values'' that re-weight this iid set of samples so that the weighted sample set can be interpreted as being drawn from $\updatedens$.
We can, for instance, perform rejection sampling (as in \cite{BJW18a}) on such a sample set to generate an iid set of samples from $\updatedens$.
Alternatively, we can compute a weighted kernel density estimate (KDE) to directly approximate $\updatedens$ (e.g., as shown in~\cite{RHB25}).
Similarly, applying a different QoI map $\tilde{\phi}$ to generate $\{\tilde{\phi}(\lambda^{(i)})\}$ and then computing the weighted KDE with the prior $r$-values will generate an approximation of the push-forward density defined by $\updatedens$ and $\tilde{\phi}$. 
This approach is utilized in Algorithm~\ref{alg:main} (discussed in Section~\ref{sec:iterative_DCI}) to generate the numerical results of Section~\ref{sec:numerics}.
Finally, we note that if the predictability assumption holds, then $\updatedens$ is indeed a density and it follows that
\begin{equation}
     \mathbb{E}_\text{init}(r(\lambda)) = \int_\pspace \updatedens(\lambda) = 1 \ \Rightarrow \ \frac{1}{N}\sum_i r(\lambda^{(i)}) \approx 1.
\end{equation}
This naturally leads to a computational diagnostic where, for a given set of parameter samples and associated $r$-values, we check the validity of the predictability assumption by verifying that
\begin{equation}\label{eq:diagnostic}
    \left\vert \frac{1}{N}\sum_i r(\lambda^{(i)}) - 1 \right\vert < tol_r
\end{equation}
for a specific tolerance $tol_r$ that is often chosen to be $0.1$ to account for finite sampling error in the approximation of the expected $r$-values.
This diagnostic is utilized in Algorithm~\ref{alg:main} and is discussed in more detail in the numerical results of Section~\ref{sec:numerics}.

\section{DCI Solutions Minimize f-Divergences}\label{sec:f-divergence-solns}

It is worth noting that by varying $\initmeas$, we can generate a range of possible solutions, $\updatemeas$, implying that any pullback measure can be recovered by a well chosen initial distribution. 
It may also be necessary to vary $\initmeas$ to ensure that $\initmeas \ll \updatemeas$, which ensures that the space of potential solutions is nonempty. 
This flexibility in the choice of an initial distribution ultimately allows for a more comprehensive exploration of the solution space.
Although varying $\initmeas$ is not the focus of this work, it will be the topic of a future work as mentioned in Section~\ref{sec:conclusions}.

Once $\initmeas$ is chosen, we prove below that the DCI solution is optimal in the sense of minimizing the f-divergences over the space of all probability measures that are absolutely continuous with respect to $\initmeas$ and whose push-forward measure is equal to $\obsmeas$. 
The f-divergences \cite{polyanskiy2022information} are commonly employed in probability and information theory to quantify the difference between probability measures often in the context of determining optimal parameters or hyper-parameters of density models, e.g., see~\cite{Guntuboyina2011, BKM17}. 
A key feature of f-divergences are their flexibility, as they generalize and connect many well-known divergence measures, such as the Kullback-Leibler (KL) divergence and total variation distance \cite{KL1951, RenyiKL2014, BKM17, AH21}.
Utilizing the notation of this work, the f-divergence is defined as:
\[
    D_{f}(P || P_\text{init}) = \int_\pspace f\left(\frac{dP}{dP_\text{init}}(\lambda)\right) dP_\text{init}(\lambda)
\]
where $P \ll P_\text{init}$ and \( f: [0, \infty) \to \mathbb{R} \) is a convex function satisfying \( f(1) = 0 \) and $f(0) = \lim_{t \rightarrow 0^{+}}f(t)$.
When $f(t)=t\log t$, we obtain the KL divergence.

\begin{lemma}\label{lemma:for_bac} 
Let $\obsmeas$ denote a probability measure on $(\dspace, \dborel)$ and $\initmeas$ denote any measure on $(\pspace,\pborel)$ such that $\obsmeas$ is absolutely continuous with respect to $\predictmeas=\initmeas\circ \phi^{-1}$. 
Let $\p = \{P\ll \initmeas\, : \, P\circ \phi^{-1} = \obsmeas\}$, then $\updatemeas\in \p$ as constructed in~\eqref{integration} solves both the forward minimization and backward minimization problems, i.e., 
\[
    \updatemeas = \argmin_{P\in \p} D_{f}(P || \initmeas) \quad \text{Forward minimization},
\]
and, if $\initmeas \ll \updatemeas$, then
\[
    \updatemeas = \argmin_{P\in \tilde{\p}} KL(\initmeas || P) \quad \text{Backward minimization}
\]
where $\tilde{\p}$ is the subspace of $\p$ such that $\initmeas \ll P$ for all $P\in\tilde{\p}$.
\end{lemma}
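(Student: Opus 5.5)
The plan is to reduce both minimization problems to statements about the push-forward measures on $\dspace$, using the disintegration theorem. First, note that $\updatemeas\in\p$: by construction~\eqref{integration}, for $A\in\dborel$ we have $\updatemeas(\phi^{-1}(A))=\int_\dspace \initmeasomega(\phi^{-1}(A))\,d\obsmeas(\omega)=\int_\dspace 1_A(\omega)\,d\obsmeas(\omega)=\obsmeas(A)$, since $\initmeasomega$ is concentrated on $\phi^{-1}(\omega)$. Moreover, because $\obsmeas\ll\predictmeas$, the Radon--Nikodym derivative $g:=d\obsmeas/d\predictmeas$ exists on $\dspace$. We then check that $d\updatemeas/d\initmeas=g\circ\phi$ by comparing $\int_A g(\phi(\lambda))\,d\initmeas(\lambda)=\int_\dspace \initmeasomega(A)\,g(\omega)\,d\predictmeas(\omega)$ with~\eqref{integration}. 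This is the measure-theoretic version of~\eqref{eq:updated_density} and does not need the predictability constant.

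For the forward problem, take any $P\in\p$ and set $h:=dP/d\initmeas$. The key identity is $\mathbb{E}_{\initmeas}[h\mid\sigma(\phi)]=g\circ\phi$. Indeed, for $B=\phi^{-1}(A)$ we have $\int_B h\,d\initmeas=P(\phi^{-1}(A))=\obsmeas(A)=\int_A g\,d\predictmeas=\int_B g\circ\phi\,d\initmeas$. Conditional Jensen for the convex $f$ then gives
\[
D_f(P\|\initmeas)=\int f(h)\,d\initmeas\ \ge\ \int f\big(\mathbb{E}[h\mid\sigma(\phi)]\big)\,d\initmeas=\int f(g\circ\phi)\,d\initmeas=D_f(\updatemeas\|\initmeas).
\]
So $\updatemeas$ attains the minimum. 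If $f$ is strictly convex, equality in Jensen forces $h=g\circ\phi$ $\initmeas$-a.e., which gives uniqueness of the argmin. Otherwise we read ``argmin'' as ``a minimizer''.

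For the backward problem, take $P\in\tilde{\p}$ and use the chain rule $dP/d\initmeas=h$ with $h>0$ $\initmeas$-a.e., since $\initmeas\ll P$. Then
\[
KL(\initmeas\|P)=-\int\log h\,d\initmeas .
\]
The function $-\log$ is convex, so conditional Jensen again gives $-\int \log h\,d\initmeas\ge-\int\log(g\circ\phi)\,d\initmeas=KL(\initmeas\|\updatemeas)$. Here $\updatemeas\in\tilde{\p}$ by hypothesis, so $g\circ\phi>0$ $\initmeas$-a.e. Alternatively, one can write $KL(\initmeas\|P)=KL(\initmeas\|\updatemeas)+\int\log\frac{d\updatemeas}{dP}\,d\initmeas$. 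Disintegrating $P=\int P_\omega\,d\obsmeas$ shows that the last term equals $\int_\dspace KL(\initmeasomega\|P_\omega)\,d\predictmeas(\omega)\ge0$, since $d\updatemeas/dP$ splits into the ratio of the fiber measures $\initmeasomega$ and $P_\omega$, with the $\omega$-parts cancelling.

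I expect the main obstacle to be integrability rather than the inequality itself. The conditional Jensen inequality needs $f(h)$ (respectively $\log h$) to be quasi-integrable. When $D_f(P\|\initmeas)=\infty$ the forward inequality is trivial, and otherwise Jensen applies with $h\in L^1(\initmeas)$. For the backward case, $\log h$ can have infinite negative part; if $KL(\initmeas\|P)=\infty$ there is nothing to prove, and otherwise I would use truncation and monotone convergence. A second technical point is that the statement allows $\initmeas$ to be a general, possibly non-probability, measure. I would handle this by assuming $\sigma$-finiteness so that conditional expectation is defined, or by normalizing when $\initmeas$ is finite.
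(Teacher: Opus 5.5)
Your proof is correct. The forward half matches the paper; the backward half takes a different and more general route.

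\textbf{Forward problem.} This is the paper's argument in different language. The paper disintegrates $D_f(P\|\initmeas)$ over $\dspace$ and applies Jensen on each fiber $\phi^{-1}(\omega)$ with respect to $\initmeasomega$. It then shows that the fiber average of $dP/d\initmeas$ equals $\frac{d\obsmeas}{d\predictmeas}(\omega)$. That is exactly your identity $\mathbb{E}_{\initmeas}[h\mid\sigma(\phi)]=g\circ\phi$ followed by conditional Jensen, with the disintegration serving as a regular version of the conditional law.

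\textbf{Backward problem.} The paper writes $KL(\initmeas\|P)-KL(\initmeas\|\updatemeas)=\int\log(d\updatemeas/dP)\,d\initmeas$. It introduces the change-of-conditionals measure $P'$ with $dP'/d\initmeas=dP/d\updatemeas$, which has push-forward $\predictmeas$ and conditionals $P_\omega$. It checks by disintegration that $P'$ is a probability measure and concludes from $KL(\initmeas\|P')\ge 0$. Your secondary sketch, $\int_\dspace KL(\initmeasomega\|P_\omega)\,d\predictmeas(\omega)$, is that same identity written fiberwise.

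Your primary route instead observes that on $\tilde{\p}$ the backward KL is simply the forward $f$-divergence with the convex function $t\mapsto-\log t$. The forward conditional-Jensen step then finishes it, with no auxiliary measure. This buys more than brevity. For $P\in\tilde{\p}$ one has $D_f(\initmeas\|P)=\int_\pspace h\,f(1/h)\,d\initmeas$, and the perspective $t\mapsto t f(1/t)$ is convex on $(0,\infty)$. Using $g\circ\phi>0$ $\initmeas$-a.e., the same two lines show that $\updatemeas$ minimizes the backward $D_f(\initmeas\|\cdot)$ over $\tilde{\p}$ for every convex $f$, not only for KL.

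The paper's route buys an exact decomposition of the gap, $KL(\initmeas\|P)=KL(\initmeas\|\updatemeas)+KL(\initmeas\|P')$, with an interpretable $P'$, rather than only an inequality.

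Two of your remarks address points the paper's proof leaves implicit. First, reading ``argmin'' as a unique minimizer requires strict convexity and $D_f(\updatemeas\|\initmeas)<\infty$; the paper only shows $\updatemeas$ is a minimizer. Second, $\initmeas$ is tacitly treated as a probability measure.
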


\begin{proof}
Note that $\updatemeas\in\p$ is immediate from its construction. 
We therefore first prove that $\updatemeas$ is the solution to the forward minimization problem.
First, we establish that $\updatemeas$ is absolutely continuous with respect to $\initmeas$ so that the standard f-divergence can be computed. 
By the construction of $\updatemeas$ via the disintegration theorem and the Radon-Nikodym formula for applying change-of-measure, we have for each $A\in \B_\Lambda$ the following, 
\begin{eqnarray*}
    \updatemeas(A) =\int_A \, d\updatemeas(\lambda) &=& \int_\dspace \initmeasomega(A) \, d\obsmeas(\omega)\\
                &=&\int_{\dspace} \initmeasomega(A) \cfrac{d\obsmeas}{d\predictmeas}(\omega)\, d\predictmeas(\omega) \\
                &=&\int_A  \cfrac{d\obsmeas}{d\predictmeas}(\phi(\lambda)) dP_\text{init}.
\end{eqnarray*}
It follows that for a.e.~$\lambda$, we have
\[
    \cfrac{d\updatemeas}{d\initmeas}(\lambda) = \cfrac{d\obsmeas}{d\predictmeas}(\phi(\lambda)).
\]
In other words, the Radon-Nikodym derivative of $\updatemeas$ with respect to $P_\text{init}$ can be computed in terms of the Radon-Nikodym derivative of $\obsmeas$ with respect to $\predictmeas$, which exists by the assumption of absolute continuity of $\obsmeas$ with respect to $\predictmeas$. 
Then, by the definition of f-divergence for the non-singular case and the disintegration theorem, we have
\begin{align}\label{eq:Aint}
    D_{f}(\updatemeas || \initmeas) &=  \int_{\Lambda} f\left(\cfrac{d\updatemeas}{d\initmeas}(\lambda)\right) d\initmeas(\lambda) \nonumber \\
    &= \int_{\mathcal{D}} \underbrace{\int_{\phi^{-1}(\omega)} f\left(\cfrac{d\updatemeas}{d\initmeas}(\lambda)\right) d\initmeasomega(\lambda)}_{=: a(\omega)} d\predictmeas(\omega) \\ 
    &= \int_{\mathcal{D}} a(\omega) d\predictmeas(\omega) \nonumber.
\end{align}

Of particular note is that since $\updatemeas$ and $P_\text{init}$ admit the same family of disintegrated conditional densities, then for each $\omega$, the Radon-Nikodym derivative $\frac{d\updatemeas}{d\initmeas}$ is a constant for all $\lambda\in\phi^{-1}(\omega)$.
Thus,
\begin{eqnarray*}
    a(\omega) &=& \int_{\phi^{-1}(\omega)} f\left(\cfrac{d\updatemeas}{d\initmeas}(\lambda)\right) d\initmeasomega(\lambda) \\
            &=& \int_{\phi^{-1}(\omega)} f\left(\cfrac{d\obsmeas}{d\predictmeas}(\phi(\lambda))\right) d\initmeasomega(\lambda)  \\
            &=& f\left(\cfrac{d\obsmeas}{d\predictmeas}(\omega)\right) \underbrace{\int_{\phi^{-1}(\omega)} d\initmeasomega(\lambda)}_{=1}\\
            &=& f\left(\cfrac{d\obsmeas}{d\predictmeas}(\omega)\right).
\end{eqnarray*}

Consider any other $P\in\p$ where it similarly follows that
\begin{align}\label{eq:Bint}
  D_{f}(P || \initmeas) =  \int_{\Lambda} f\left(\cfrac{dP}{d\initmeas}\right) d\initmeas(\lambda) &= \int_{\mathcal{D}} \underbrace{\int_{\phi^{-1}(\omega)} f\left(\cfrac{dP}{d\initmeas}\right) d\initmeasomega(\lambda)}_{=: b(\omega)} d\predictmeas(\omega) \\
  &= \int_{\mathcal{D}} b(\omega) d\predictmeas(\omega). \nonumber 
\end{align}
Due to the convexity of $f$,
\begin{equation*}
    b(\omega) = \int_{\phi^{-1}(\omega)} f\left(\cfrac{dP}{d\initmeas}(\lambda)\right) d\initmeasomega(\lambda) \ge f\left(\int_{\phi^{-1}(\omega)} \cfrac{dP}{d\initmeas}(\lambda) d\initmeasomega(\lambda)\right).
\end{equation*}
Note that for any $B \in \mathcal{B}_{\mathcal{D}}$, applying the disintegration theorem and using the Radon-Nikodym derivative of $P$ with respect to $P_\text{init}$ to perform a change of measure on $\phi^{-1}(B)\in\B_\Lambda$ yields
\begin{equation*}
    P(\phi^{-1}(B)) = \int_{\phi^{-1}(B)} \cfrac{dP}{d\initmeas}(\lambda) d\initmeas(\lambda) = \int_{B} \left( \int_{\phi^{-1}(\omega)} \cfrac{dP}{d\initmeas}(\lambda) \, d\initmeasomega(\lambda) \right) \, d\predictmeas(\omega).
\end{equation*}
At the same time, since $P\in\p$, we have
\begin{equation*}
    P(\phi^{-1}(B)) = \obsmeas(B) = \int_B \, d\obsmeas(\omega) = \int_{B} \cfrac{d\obsmeas}{d\predictmeas}(\omega) d\predictmeas(\omega).
\end{equation*}
Combining these results, it follows that for a.e.~$\omega$, 
\begin{equation*}
    \int_{\phi^{-1}(\omega)} \cfrac{dP}{d\initmeas}(\lambda) d\initmeasomega(\lambda) = \cfrac{d\obsmeas}{d\predictmeas}(\omega).
\end{equation*}
In other words, the integration of $\cfrac{dP}{dP_\text{init}}(\lambda)$ with respect to the conditional measure $\initmeasomega$ over $\phi^{-1}(\omega)$ is equal to $\cfrac{d\obsmeas}{d\predictmeas}(\omega)$.
This is different than the relationship discussed above between $\cfrac{d\updatemeas}{dP_\text{init}}(\lambda)$ and $\cfrac{d\obsmeas}{d\predictmeas}(\omega)$ since $\cfrac{dP}{dP_\text{init}}(\lambda)$ can vary across the set $\lambda\in\phi^{-1}(\omega)$.  
More importantly, this establishes the desired inequality of $b(\omega) \geq f\left(\cfrac{d\obsmeas}{d\predictmeas}(\omega)\right)$.
We have thus demonstrated $\omega$-a.e. that $a(\omega) \le b(\omega)$ so  that
\begin{equation*}
 D_{f}(\updatemeas || \initmeas) = \int_{\mathcal{D}} a(\omega) d\predictmeas(\omega)  \le \int_{\mathcal{D}} b(\omega) d\predictmeas(\omega) = D_{f}(P || \initmeas) ,
\end{equation*}
proving the assertion of forward minimization. 

We next show that $\updatemeas$ is the solution to the backward minimization problem where we restrict $P\in\tilde{\p}$.
We begin by manipulating Radon-Nikodym derivatives through the chain rule to obtain
\begin{align*}
KL( \initmeas || P) - KL( \initmeas || \updatemeas) &= \int_{\Lambda} \log\left(\cfrac{d\initmeas}{dP}(\lambda)\right) d\initmeas(\lambda) - \int_{\Lambda} \log\left(\cfrac{d\initmeas}{d\updatemeas}\right)(\lambda) d\initmeas(\lambda)\\
&= \int_{\Lambda} \log\left(\cfrac{d\updatemeas}{dP}(\lambda)\right) \, d\initmeas(\lambda).
\end{align*}
For each $A\in\B_\Lambda$, define the measure $P^{\prime}$ as follows %{\bf to-do: be more precise about the use of RN derivatives below by referring to the assumptions of absolute continuity that need to be made explicit in this lemma that allow for $dP/d\updatemeas$ to have meaning}
\begin{equation}\label{eq:change_of_conditionals}
    P^{\prime}(A) := \int_A \cfrac{dP}{d\updatemeas}(\lambda) \, d\initmeas(\lambda).
\end{equation}
By direct substitution, 
\begin{eqnarray*}
    KL( \initmeas || P) - KL( \initmeas || \updatemeas) &=& \int_{\Lambda} \log\left(\cfrac{d\updatemeas}{dP}(\lambda)\right) d\initmeas(\lambda) \\
    &=& \int_{\Lambda} \log\left(\cfrac{d\initmeas}{dP^{\prime}}(\lambda)\right) d\initmeas(\lambda) = KL(P_\text{init} || P^{\prime}).
\end{eqnarray*}    
If $P^{\prime}$ is a probability measure, then $KL(P_\text{init} || P^{\prime})\geq 0$ by properties of the KL divergence, which will complete the proof. 
Clearly, $\int \cfrac{dP}{d\updatemeas} d\initmeas$ defines a measure, so we only show that $P^{\prime}(\Lambda) = 1$, which follows from 
\begin{align*}
    P^{\prime}(\Lambda) = \int_{\Lambda} \cfrac{dP}{d\updatemeas}(\lambda) d\initmeas(\lambda) &= \int_{\mathcal{D}} \left(\int_{\phi^{-1}(\omega)} \cfrac{dP}{d\updatemeas}(\lambda) \, d\initmeasomega(\lambda)\right) \, d\predictmeas(\omega) \\
    &= \int_{\mathcal{D}} \left(\int_{\phi^{-1}(\omega)} \cfrac{dP_\omega}{d\initmeasomega}(\lambda) \, d\initmeasomega(\lambda) \right)\, d\predictmeas(\omega) \\
    &= \int_{\mathcal{D}} \int_{\phi^{-1}(\omega)} \, dP_\omega(\lambda) \, d\predictmeas(\omega) \\
    &= P(\pspace) = 1.
\end{align*}
Above, the conditionals defining $dP_\omega$ are uniquely determined by the disintegration of $P$ and we utilized the fact that, by construction, $\updatemeas$ has the same conditionals as $\initmeas$. 
\end{proof}

An interesting consequence from the above proof is that the last step implies
\begin{equation*}
P^{\prime}(A) = \int_{\mathcal{D}}P_{\omega}(A) d\predictmeas(\omega), \ \forall \, A\in\B_\Lambda.
\end{equation*}
In other words, the probability measure $P^{\prime}$ has the same push-forward as $P_\text{init}$ but disintegrates into the conditional distributions defined by $P$.
Subsequently,~\eqref{eq:change_of_conditionals} represents a change of conditional measure formula.

The following corollary is an immediate consequence of~\eqref{eq:density-based-DCI}.

\begin{corollary}
If the Radon-Nikodym derivatives $\initdens$, $\predictdens$, and $\updatedens$ all exist and there exists $C> 0$ such that $\predictdens(\omega)\leq C\obsdens(\omega)$, then the conclusions of Lemma~\ref{lemma:for_bac} hold for $\updatemeas$ as constructed in~\eqref{eq:density-based-DCI}.
\end{corollary}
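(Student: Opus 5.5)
The plan is to reduce everything to Lemma~\ref{lemma:for_bac}. I would first check that the measure defined by~\eqref{eq:density-based-DCI} coincides with the measure constructed in~\eqref{integration}. I would then check that the two absolute-continuity hypotheses of the lemma hold: $\obsmeas\ll\predictmeas$ for the forward problem, and $\initmeas\ll\updatemeas$ for the backward problem.

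\textbf{Step 1: identifying the two constructions.} I read the hypothesis that $\updatedens$ ``exists'' in the form~\eqref{eq:updated_density} as saying that the ratio $\obsdens/\predictdens$ is a well-defined, finite, measurable function $\mu_\dspace$-a.e. This forces $\obsdens=0$ a.e.\ on $\{\predictdens=0\}$, which gives $\obsmeas\ll\predictmeas$ with
\[
\frac{d\obsmeas}{d\predictmeas}=\frac{\obsdens}{\predictdens}.
\]
Next I would write $\initmeas(d\lambda)=\initdens\,d\mu_\pspace$ and disintegrate $\mu_\pspace$ along $\phi$, as is already implicit in~\eqref{eq:density-based-DCI}. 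With this, the inner integral in~\eqref{eq:density-based-DCI-factored} equals $\initmeasomega(A)\,\predictdens(\omega)$. Substituting gives
\[
\updatemeas(A)=\int_\dspace \initmeasomega(A)\,\obsdens(\omega)\,d\mu_\dspace(\omega)=\int_\dspace\initmeasomega(A)\,d\obsmeas(\omega),
\]
which is exactly~\eqref{integration}. Since $\obsmeas\ll\predictmeas$ now holds, the forward-minimization conclusion follows directly from the lemma.

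\textbf{Step 2: the backward hypothesis $\initmeas\ll\updatemeas$.} I would use the reversed bound $\predictdens\le C\obsdens$ for this. Suppose $\updatemeas(A)=0$. Then $\initdens\, r=0$ $\mu_\pspace$-a.e.\ on $A$, where $r$ is as in~\eqref{eq:updated_density_r}. So, up to a null set, $A$ is contained in
\[
\{\initdens=0\}\cup\phi^{-1}(N), \qquad N:=\{\obsdens=0\}.
\]
On $N$ the bound gives $\predictdens\le C\obsdens=0$, so $\predictmeas(N)=0$. Hence $\initmeas(\phi^{-1}(N))=\predictmeas(N)=0$. The set $\{\initdens=0\}$ is $\initmeas$-null by definition. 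Therefore $\initmeas(A)=0$, so $\initmeas\ll\updatemeas$. The lemma then yields the backward KL minimization over $\tilde\p$.

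\textbf{Main obstacle.} I expect the delicate point to be the measure-theoretic bookkeeping in Step 1. This means justifying the disintegration of $\mu_\pspace$, handling the $0/0$ convention on $\{\predictdens=0\}$, and making sure that "existence of $\updatedens$" is legitimately read as supplying $\obsmeas\ll\predictmeas$. The stated inequality $\predictdens\le C\obsdens$ only supplies the reverse continuity, so it cannot be used here. I would handle this by fixing the convention $r=0$ wherever $\obsdens=0$ and showing that the remaining exceptional set is $\predictmeas$-null. After that, both claims are direct applications of Lemma~\ref{lemma:for_bac}.
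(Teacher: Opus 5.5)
Your proposal is correct and is the paper's argument made explicit. The paper only calls the corollary an immediate consequence of \eqref{eq:density-based-DCI}: the density-based update coincides with \eqref{integration}, so Lemma~\ref{lemma:for_bac} applies. You also correctly note that the stated bound $\predictdens\le C\obsdens$ only gives $\predictmeas\ll\obsmeas$, hence $\initmeas\ll\updatemeas$ for the backward claim, so $\obsmeas\ll\predictmeas$ has to come from the hypothesis that $\updatedens$ exists.

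One slip in your closing paragraph: the set $\{\predictdens=0<\obsdens\}$ is trivially $\predictmeas$-null. What the forward claim needs, and what your Step~1 reading actually supplies, is that this set is $\mu_\dspace$-null, hence $\obsmeas$-null.
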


%%%%%%%%%%%%
\section{The Generalized Stochastic Inverse Problem}\label{sec:GSIP}
%%%%%%%%%%%%
We now describe the \textit{generalized stochastic inverse problem (GSIP)} that is the particular focus of this work, which also serves to motivate the iterative DCI approach developed and analyzed in the following section. 

Consider separable, complete metric spaces $\Lambda\subset\mathbb{R}^p$ and $\{\mathcal{D}_{i}\}_{i \le k}$ for a positive integer $k$, where, for each $i\le k$, $\mathcal{D}_i\subset\mathbb{R}^{d_i}$ for some positive integer $d_i$. 
Let $(\Lambda, \B_{\Lambda})$ and $\{(\mathcal{D}_{i}, \B_{\mathcal{D}_{i}})\}_{i \le k}$ be the corresponding measurable spaces with Borel $\sigma$-algebras $\B_{\Lambda}$ and $\B_{\mathcal{D}_{i}}$ for $i\le k$, respectively. 
For each $i$, let $\phi_{i}: \Lambda \rightarrow \mathcal{D}_{i}$ be a measurable mapping. 
Any probability measure $P$ on $(\Lambda, \B_\Lambda)$ induces push-forward probability measures denoted by $P\circ \phi_{i}^{-1}$ on $(\mathcal{D}_i, \B_{\mathcal{D}_i})$ for each $i$. 

The GSIP is then defined as follows: For a given collection of observed probability measures, denoted by $P_{\text{obs},i}$ for each $i\le k$, defined on each $(\mathcal{D}_i, \B_{\mathcal{D}_i})$, we aim to find a \emph{single} probability measure $P$ satisfying the following $k$ push-forward constraints
\begin{equation}\label{eq:k-data-consistent}
    (P\circ \phi_{i}^{-1})(A) = P_{\text{obs},i}(A), \quad \forall A\in\B_{\mathcal{D}_i}, \quad i =1, \ldots, k. 
\end{equation}
It is self-evident that if each $P_{\text{obs},i}$ is the induced push-forward measure of some probability measure defined on $(\Lambda, \B_\Lambda)$, then a solution is guaranteed to exist.
We make such an assumption in this work and refer to this measure as the {\em data-generating distribution} (denoted as $P_\text{DG}$), given its role in inducing the observed probability measures. 
Such an assumption is reasonable in practice when observed measures are associated with observed data, but such an assumption may be violated if observed measures are specified as target measures a priori of any data collection.

We briefly emphasize several important differences between the GSIP and the SIP.
First, for the SIP solved by DCI, only a single (possibly vector-valued) map $\phi$ is considered. 
By contrast, there are multiple (possibly vector-valued) maps considered in the GSIP. 
Moreover, for the GSIP, it is possible that some components of a map $\phi_i$ may also appear in other maps $\phi_j$.
This is related to a second key difference in terms of how uncertainties in output data are assumed to be quantified in the SIP and GSIP.
In the SIP, a single {\em joint} observed measure is assumed to be defined on the full data space defined by all the outputs of the map $\phi$. 
However, the GSIP does not assume that the full joint observed distribution is available on $\mathcal{D}_1\times\cdots\times\mathcal{D}_k$. 
A final important note is that a predictability assumption is sufficient to guarantee the DCI solution exists to the SIP (e.g., that $\obsmeas$ is absolutely continuous with respect to $\predictmeas=\initmeas\circ\phi^{-1}$). 
While an analogous predictability assumption is also required for the solution of the GSIP, a further assumption is needed to guarantee that each of the separate $k$ push-forward constraints of~\eqref{eq:k-data-consistent} are simultaneously satisfied, which is handled in this work by the additional assumption of a data-generating distribution. 

There are some notable similarities between the GSIP and SIP.
First of all, both involve similar forms of push-forward constraints, which at least hints at the possibility that the DCI framework could be used in some manner to solve the GSIP. 
We also note that for either problem, a convex combination of solutions is also a solution.
Thus, these problems either have a unique solution or infinite solutions. 
However, it is worth emphasizing that for either problem the solution may not be unique, which is immediately obvious once one considers the potentially ``ideal'' situation for the SIP where $\phi$ is invertible since this alone is insufficient to guarantee the conditions are satisfied for the well-known change-of-variables formula, which served as the motivation for the DCI framework \cite{BET+14, BJW18a}. 
Ultimately, for both the SIP and GSIP, solutions obtained via DCI-inspired techniques are unique only up to the choice of an initial distribution. 
%\tmw{When would we expect them to give the same solution?  When the push-forward of the data-generating distribution is actually the product of the marginals, i.e., if the QoI are actually independent?  This might be worth noting.} @ Tim, see the next sentence. I don't want to elaborate further how this actually gets into geometric decompositions of the QoI maps. I think this is something worth diving into in more detail in a follow-up paper on OED. 
When the initial distribution is identical for both the SIP and GSIP, we would only expect the solutions to the SIP and GSIP to agree if (1) the predicted distribution for the SIP is defined by the product of the predicted distributions for the GSIP, and (2) the observed distribution for the SIP is similarly defined by the product of the observed distributions for the GSIP. 
%In the case of nonuniqueness, all solutions are equivalent in the sense that they all induce the same collection of push-forward measures denoted by the $P_{\text{obs},i}$. 

%%%%%%%%%%%%
\section{An Iterative DCI Approach}\label{sec:iterative_DCI}
%%%%%%%%%%%%
Motivated by Lemma~\ref{lemma:for_bac}, we propose to find a solution to the GSIP based on an iterative DCI approach to sequentially update an initial distribution that results in a unique solution, up to the choice of $\initmeas$, that simultaneously minimizes f-divergences associated with each constraint.
This is in contrast to the DCI approach for solving the SIP that minimizes a single f-divergence associated with a single push-forward constraint.

For each $1\leq i\leq k$, let $\p_{i}$ denote the set of probability measures that satisfy the $i$th push-forward constraint, i.e., $\p_{i} := \{P\ll \initmeas\, :\,  P\circ\phi_{i}^{-1} = P_{\text{obs},i}\}$. 
For mathematical convenience in the iterative scheme utilized in this work, denote $\p_{mk+1}=\p_1, \p_{mk+2}=\p_2, \ldots$, where $m=0, 1, \ldots$. 
It follows that $\p_n=\p_i$ if $(n-i) \mathrm{~mod~} k =0$ (i.e., if there exists nonnegative integer $m$ such that $n=mk+i$ for $1\leq i\leq k$).
The solution set of the GSIP is then expressed as 
\begin{equation*}
    \p = \bigcap_{i = 1}^k \p_{i} = \bigcap_{n = 1}^\infty \p_{n}.
\end{equation*} 
Since each set $\p_i$ is convex, so is the intersection defining $\p$. 
Moreover, under the assumption that the $P_{\text{obs},i}$ are themselves induced by some data-generating distribution, the above intersection is non-empty as it must contain at least $P_\text{DG}$. 

Finding a solution in $\p$ can be challenging. 
We therefore begin by formulating an optimization problem over a convex set. 
In particular, we consider
\begin{align} \label{overall_min}
    \min_{P \in \p}D_f(P|| P_\text{init}).    
\end{align}
%where $D$ denotes a discrepancy such as an $f$-divergence between probability measures. 
Clearly, all elements in $\p$ can be viewed as a minimizer for some choice of $P_\text{init}$. 
However, we emphasize that $P_\text{init}$ need not belong to $\p$. 
This particular probability measure simply represents the initial probability measure previously mentioned and is a place where expert opinion can weigh in on the final structure of the solution.
The only requirement on $\initmeas$ is that $\obsmeasi \ll \initmeas\circ \phi_i^{-1}$ for each $i$. 

A commonly used approach to tackle an optimization problem with many constraints is to consider one constraint at a time, which leads to a sequence of \textit{subproblems} that we iterate through to compute a solution. 
For the $n$th iteration, we consider $\mathtt{Subproblem}_n$, defined as the minimization problem
\begin{align} \label{sub_min}
    \min_{P \in \p_{n}} D_f(P ||P^{n-1}),
\end{align}
where $P^{n-1}$ is the minimizer of $\mathtt{Subproblem}_{n-1}$.
By Lemma~\ref{lemma:for_bac}, the minimum of~\eqref{sub_min} is given by the DCI updated measure  defined by $P^{n-1}$ using map $\phi_i$ (where $i=n \mathrm{~mod~} k$) and observed measure $\obsmeasi$.
At this point, it is worth noting the use of subscripts and superscripts in this work and the role of DCI in constructing the iterative solutions based on the associated subproblems.
Subscripts are generally used to denote a particular map, space, subproblem, or the role of a particular measure whereas superscripts are used to denote measures that change at each iteration. 
% Thus, in the context of DCI, we start with $P^0=P_\text{init}$, determine the first prediction, denoted $P^1_\text{pred}$, and use density representations of these measures along with that of $P_{\text{obs},1}$ to obtain a solution to $\mathtt{Subproblem}_1$ denoted as $P^1_\text{update}$. 
% In $\mathtt{Subproblem}_2$, $P^1_\text{update}$ plays the role of $P^1$, so in the context of DCI, the first updated measure serves as the initial probability measure for the second iteration, i.e., $P^2\text{init} = P^1_\text{update}$. 
% Once we epoch through the first $k$ subproblems, we arrive at $\matt{Subproblem}_{k+1}$, which again involves $P_{\text{obs},1}$ in the construction of the solution, but now the initial measure $P^{k+1}_\text{init}$ is given by $P^k_\text{update}$. 
In constructing solutions, we utilize the DCI framework at each iteration where the updated measure that solves $\mathtt{Subproblem}_{n-1}$ subsequently serves as the initial measure used to construct a new updated measure that solves $\mathtt{Subproblem}_n$. 
Given the separate roles that a particular initial or updated measure may play, depending on the iteration, it is notationally convenient to drop subscripts from these measures and simply use $P^n$ where its role as either the initial or updated measure is understood based on the relationship of this superscript to the iteration index.
Thus, for a given initial probability measure $P_\text{init}$, we set $P^0=P_\text{init}$ and iteratively solve the subproblems within the DCI framework to yield a sequence of local solutions, $P^1, P^2, \ldots$. 
This is illustrated, abstractly, in Figure~\ref{fig:iterative} for the first $5$ iterations of the proposed iterative minimization process when $k = 3$. 
The illustration suggests that as the iterations progress, each successive element in the sequence is expected to move closer to the intersection $\p = \p_{1} \cap \p_{2} \cap \p_{3}$, and eventually converge to an element in $\p$. 
Such convergence depends on the choice of the discrepancy $D$.
In fact, under mild conditions, we show that such a sequence obtained via the DCI based construction converges to a solution of the optimization problem described in \eqref{overall_min}, namely, a global solution.

\begin{figure}[h!]
    \centering
    \includegraphics[width=0.4\textwidth]{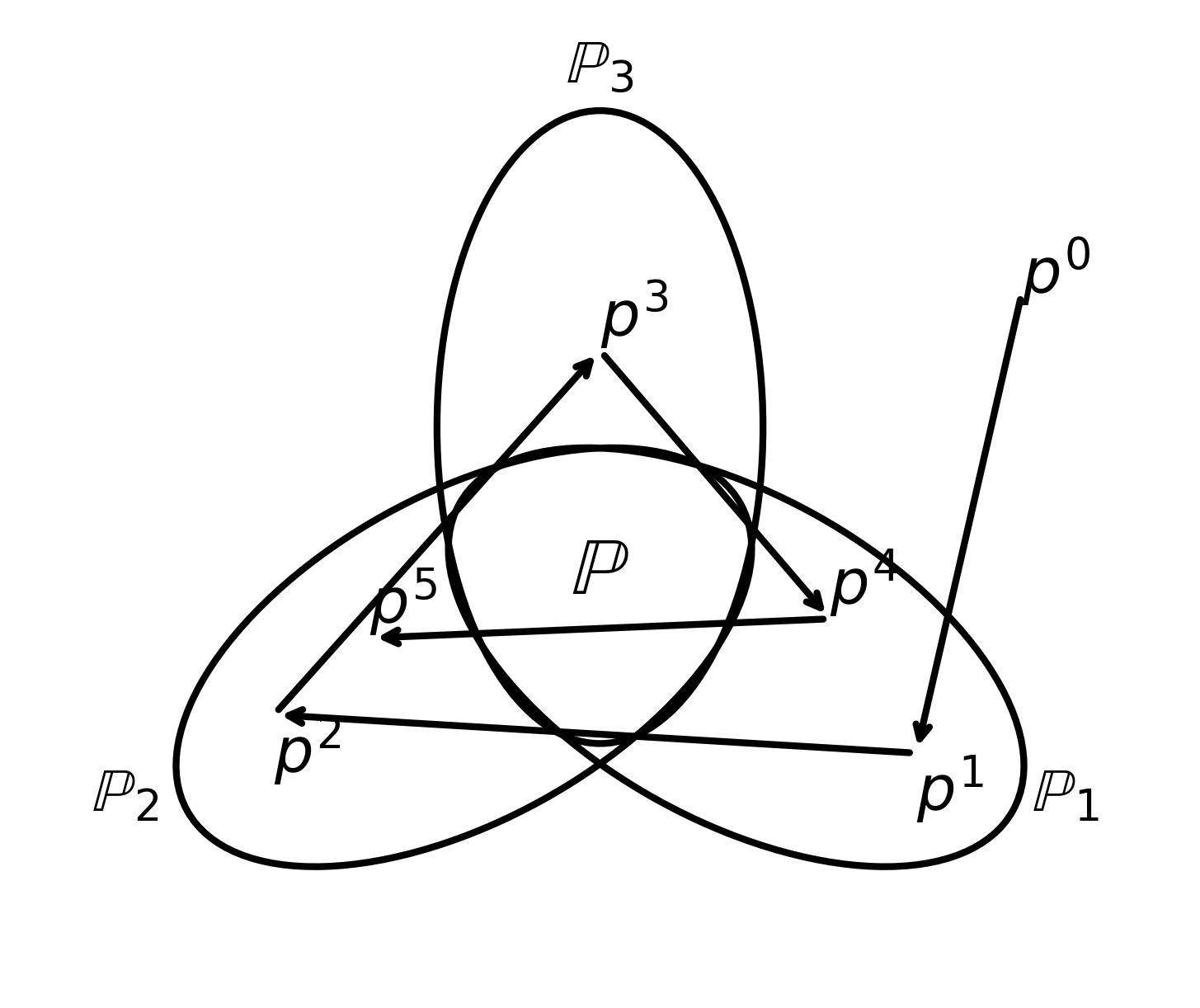}
    \caption{Schematic that conceptualizes the iterative DCI process when $k = 3$ and $\mathbb{P}=\mathbb{P}_1\cap\mathbb{P}_2\cap\mathbb{P}_3$.}
    \label{fig:iterative}
\end{figure}

Note that, due to the non-symmetry of most divergences, the solutions that minimize $D_f(\cdot || P_\text{init})$ and $D_f(P_\text{init}|| \cdot)$ are likely different. 
In practice, we may consider a problem similar to \eqref{overall_min}, defined as
\begin{align} \label{backward}
\min_{P \in \p}D_f(P_\text{init}||P).
\end{align}
Similar to Lemma~\ref{lemma:for_bac}, the problem in \eqref{overall_min} is commonly referred to as the \textit{forward minimization}, whereas \eqref{backward} is known as the \textit{backward minimization}. 
In general, the forward and backward minimization problems require different treatments. 
In this paper, we focus the on forward minimization only.
However, Lemma~\ref{lemma:for_bac} implies that if the KL divergence is utilized, then each updated measure is a solution to both the forward and backward minimization problems for the associated subproblems. 
It then follows that for the special case where $k = 2$ and the KL divergence is utilized, the iterative procedure can be viewed as alternating forward and backward minimization, which assures desirable properties of its accumulation points \cite{csiszar1984, alter_min}.

Before we state the main theoretical results for the iterative approach, we first take advantage of the structure of the DCI solutions to write out explicit forms of the probability measures obtained at each iteration, which sets the stage for both Algorithm~\ref{alg:main} and the proof of Theorem~\ref{thm:main}. 
Recall that $P^0$ not only denotes $\initmeas$, but it also serves as the initial measure in $\mathtt{Subproblem}_1$.
Following the notation of~\eqref{disintegration}, we represent its disintegration for $\mathtt{Subproblem}_1$ as follows,  
\begin{equation*}
    P^{0}(\cdot) = \int_{\mathcal{D}_{1}} P^{0}_{\omega,{1}}(\cdot) dQ^{0}_{1},
\end{equation*}
where $Q^{0}_{1}$ denotes the push-forward of $P^0$ with respect to $\phi_1$ (i.e., $Q^0_1:= P^{0}\circ\phi^{-1}_{1}$) and $\{P^0_{\omega,1}\}_{\omega \in\mathcal{D}_1}$ are the associated conditionals obtained via the disintegration.
The solution to $\mathtt{Subproblem}_1$ is then given by
\begin{equation*}
    P^{1}(\cdot) := \argmin_{P \in \p_{1}}D_f(P||P^{0}) = \int_{\mathcal{D}_{1}} P^{0}_{\omega,{1}}(\cdot) \, dP_{\text{obs}, 1}.
\end{equation*}
Consequently, at step $n$ (with $i=n\mathrm{~mod~}k$), we first disintegrate the solution to $\mathtt{Subproblem}_{n-1}$ with respect to $\phi_i$ to obtain
\begin{equation*}
    P^{n-1}(\cdot) = \int_{\mathcal{D}_{i}} P^{n-1}_{\omega,i}(\cdot) dQ^{n-1}_{i},
\end{equation*}
where $Q^{n-1}_{i} := P^{n-1}\circ\phi^{-1}_{i}$ and $\{P^{n-1}_{\omega,i}\}_{\omega\in\mathcal{D}_i}$ are the conditionals, and we construct the solution to $\mathtt{Subproblem}_n$ as
\begin{equation*}
    P^{n}(\cdot) := \argmin_{P \in \p_{i}}D_f(P || P^{n-1}) = \int_{\mathcal{D}_{i}} P^{n-1}_{\omega,i}(\cdot) dP_{\text{obs},i}.
\end{equation*}

The next result is an immediate consequence of the solution forms to the subproblems.

\begin{lemma} \label{lemma:seq_con}
If $\initmeas \circ \phi_i^{-1} \gg P_{\text{obs},i}$ for all $i\leq k$ and $\p\neq\emptyset$, then $P^{0}=\initmeas \gg P^{1} \gg \dots$. 
Furthermore, after the first $k$-epoch of iterations defined by iterating through all $k$ subproblems one time, all the $P^{n}$ measures are mutually absolutely continuous, i.e., $P^{n}\gg P^m$ and $P^n \ll P^m$ for $n, m\geq k $.
\end{lemma}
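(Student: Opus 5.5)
The plan has two parts: first the chain $P^0\gg P^1\gg\cdots$, then mutual absolute continuity after one full $k$-epoch. Both rest on the explicit density formula from the proof of Lemma~\ref{lemma:for_bac},
\[
\frac{dP^n}{dP^{n-1}}(\lambda)=\frac{dP_{\text{obs},i}}{dQ^{n-1}_i}(\phi_i(\lambda)),\qquad i=n \bmod k .
\]

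\textbf{Step 1: the chain $P^{n-1}\gg P^n$.} This formula makes $P^n\ll P^{n-1}$ immediate, as long as the step is well defined. That requires $P_{\text{obs},i}\ll Q^{n-1}_i=P^{n-1}\circ\phi_i^{-1}$ at every step. I would verify this by induction, using a dominating element of $\p$. The cleanest route is to show that $P_{\text{DG}}$, or any $P^*\in\p$, satisfies $P^*\ll P^{n-1}$ for all $n$. Then
\[
P_{\text{obs},i}=P^*\circ\phi_i^{-1}\ll P^{n-1}\circ\phi_i^{-1}.
\]
However, $P^*\ll P^0$ is not given directly; only the push-forward domination $\initmeas\circ\phi_i^{-1}\gg P_{\text{obs},i}$ is assumed. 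So I would instead argue with the null sets of $Q^{n-1}_i$ directly.

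Suppose $B$ is $Q^{n-1}_i$-null. Then $\phi_i^{-1}(B)$ is $P^{n-1}$-null. We need $P_{\text{obs},i}(B)=0$. Since $P^{n-1}$ is obtained from $\initmeas$ by multiplying by the density $\prod_j r_j$, we have $P^{n-1}(A)=0$ exactly when $\initmeas(A\cap\{\prod_j r_j>0\})=0$.

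This is the main obstacle. The earlier reweightings may have killed mass in regions that $P_{\text{obs},i}$ still needs. Here I would use $\p\ne\emptyset$, interpreting elements of $\p$ as $\ll\initmeas$ per the definition of $\p_i$. Taking $P^*\in\p$, I would show inductively that $P^*\ll P^{n-1}$. The set where $r_j=0$ is $\phi_j^{-1}(N_j)$ with $P_{\text{obs},j}(N_j)=0$, so
\[
P^*(\phi_j^{-1}(N_j))=P_{\text{obs},j}(N_j)=0.
\]
Hence $P^*$ gives no mass to the zero set of any factor, and $P^*\ll\initmeas$ then propagates to $P^*\ll P^{n-1}$. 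This yields $P_{\text{obs},i}\ll Q^{n-1}_i$, so the DCI step is well defined and $P^n\ll P^{n-1}$.

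\textbf{Step 2: mutual absolute continuity for $n,m\ge k$.} By transitivity of the chain it suffices to show $P^{n}\ll P^{n+1}$ for $n\ge k$, i.e.\ that the density $r_{n+1}=\frac{dP_{\text{obs},i}}{dQ^n_i}\circ\phi_i$ is $P^n$-a.s.\ positive.

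For $n\ge k$, $P^n$ is absolutely continuous with respect to every $P^m$ with $m\le n$. I would like to show that its push-forward $Q^n_i$ is dominated by $P_{\text{obs},i}$. The key observation is that $P^{n-k+i'}$, the iterate produced at the most recent update with index $i$, lies in $\p_i$, and $P^n\ll$ that iterate. Hence
\[
Q^n_i\ll P_{\text{obs},i}.
\]
Combined with Step 1, $Q^n_i\sim P_{\text{obs},i}$, so the Radon--Nikodym derivative is positive $Q^n_i$-a.e. Therefore $r_{n+1}>0$ $P^n$-a.s., giving $P^n\ll P^{n+1}$.

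Induction over $n\ge k$, together with Step 1, yields $P^n\sim P^m$ for all $n,m\ge k$.
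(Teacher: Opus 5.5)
Your argument is correct and follows the route the paper intends. The paper gives no separate proof; it says only that the lemma is an immediate consequence of the solution form $P^n=\int_{\mathcal{D}_i}P^{n-1}_{\omega,i}(\cdot)\,dP_{\text{obs},i}$, that is, of the density $\frac{dP^n}{dP^{n-1}}=\frac{dP_{\text{obs},i}}{dQ^{n-1}_i}\circ\phi_i$ you use. Your write-up makes explicit two points the paper leaves implicit: $\p\neq\emptyset$ (with elements $\ll\initmeas$) is what keeps $P_{\text{obs},i}\ll Q^{n-1}_i$ at every later step, and $P^n\ll P^{n+1-k}\in\p_i$ gives $Q^n_i\ll P_{\text{obs},i}$, hence the reverse domination for $n\ge k$.
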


Lemma~\ref{lemma:seq_con} implies that there exists a reference measure (e.g., $\initmeas$) that uniformly dominates the entire sequence. 
%For simplicity, we let the reference measure be the initial measure denoted by $P^0$.
% Therefore, we may write the algorithm in terms of Radon-Nikodym derivatives with respect to this particular measure. 
% We use whichever form is better suited for a given scenario. \tmw{Do we?} Nope. Revised.
Therefore, we may write the algorithm in terms of Radon-Nikodym derivatives with respect to this particular measure. 
Note that this is a weaker assumption than assuming the various probability measures admit Radon-Nikodym derivatives with respect to dominating measures on $(\pspace,\pborel)$ and $(\dspace,\dborel)$ that is assumed when applying the iterative density-based DCI approach summarized in Algorithm~\ref{alg:main} below. 
%\tmw{There's a disconnect here.  Algorithm 1 is Iterative DCI.} Fixed above. 

\begin{algorithm}\caption{Iterative DCI}\label{alg:main}
\begin{algorithmic}
    \REQUIRE Predicted QoI samples, initialized $r$-values, observed densities, tolerances and $\max_\text{epoch}.$  
    \FOR {epoch = $1,\ldots, \max_{\text{epoch}}$}
        \FOR {each QoI subspace}
        \STATE {Normalize current $r$-values to have unit sample mean.}
            \STATE {Estimate predicted density on subspace with QoI samples and $r$-values.}
            \STATE {Multiplicatively update the $r$-values using DCI.}
            \IF {diagnostic of $r$-values within tolerance}
        	\STATE {Continue to next subspace.}
            \ELSE
                \STATE {Exit both loops.}
            \ENDIF
        \ENDFOR
        \IF {absolute or relative KL divergences of QoI marginals within tolerances}
        	\STATE {Exit loop.}
            \ELSE
                \STATE {Continue to next epoch.}
        \ENDIF
    \ENDFOR
    \ENSURE Updated $r$-values.
\end{algorithmic}
\end{algorithm}

The algorithm is presented at a high-level to avoid being overly prescriptive about how one may go about estimating densities and applying DCI at each iteration. 
We provide more specific computational details in the numerical results presented in Section~\ref{sec:numerics} including a discussion of the KL divergence tolerances (both absolute and relative) since the algorithm terminates due to these tolerances in all of the numerical results.
The diagnostic of $r$-values mentioned in the algorithm refers to~\eqref{eq:diagnostic} discussed in Section~\ref{sec:DCI_summary}.
Note that by normalizing the $r$-values to have unit sample mean at the start of each iteration through the QoI subspaces ensures that the diagnostic is checking the predictability assumption applies at each specific iteration.
The first numerical example of Section~\ref{sec:numerics} provides more details about this diagnostic in the context of both the iterative algorithm and joint DCI inversion. 
By QoI marginals, we are referring to the push-forward of the updated density at the end of each epoch or the observed density on each QoI subspace but {\em not} the individual components of each QoI subspace for subspaces that are multidimensional.

At a conceptual-level, the algorithm in this work operates much like an extension of the iterative proportional fitting procedure (IPFP) for bivariate densities as analyzed in~\cite{Ruschendorf1995}. 
We do note that there does exist some prior work on extending the IPFP method to multiple marginals in the context of the Sinkhorn algorithm and optimal transport, but these require stronger assumptions and do not directly apply to our framework, e.g., see \cite{Marino2020}.
The IPFP is analogous to an alternating projection algorithm designed to work on a product space where iterative projections of marginals are utilized to systematically adjust a multivariate distribution to some specified marginals. 
Unfortunately, this cannot be applied directly to the solutions of the subproblems since the marginals of these solutions are unknown on $\pspace$.
In other words, the IPFP convergence analysis in~\cite{Ruschendorf1995} must be modified to make results applicable for the iterative DCI algorithm.
The next result provides the conditions under which such convergence is obtained in the total-variation metric within the DCI context.  
It is presented with a variant of the predictability assumption that is convenient for the proof that follows. 
A remark follows the proofs that explains how to weaken this predictability assumption. 

%{\bf Troy note: I am not sure about the 2-way predictability assumptions used here. The lower inequalities (bounds away from zero) are a bit curious to me. They are proven in the Ruschendorf paper in Lemma 4.3, but only Lemmas 4.1, 4.2, and 4.4 are used in the proof of Theorem 3.1. I am still trying to wrap my head around some of this. I think that after a single $k$-epoch (in this proof, a 3-epoch) where the predictability assumptions hold (the upper inequalities), then any subsequent iterations will likely have the lower bound satisfied because of the shared support (this is related to earlier comments in the paper about $b_1$ and $r_2$ having the same support and how this follows, by induction for $a_i$ having the same support as $r_1$ and $b_i$ having the same support as $r_2$ for all $i$. This is one way to perhaps get at an initial measure that will push-forward to $Q^0$ that has the property currently stated below with respect to the various observed distributions. But, I just don't see how to guarantee this from the first guess of $P^0=\initmeas$. I believe that if we say that $P^k$ is the new initial measure, then we can reasonably assume this inequality. I need to think about this more. I am not quite convinced we need it though.}

\begin{theorem} \label{thm:main}
Suppose there exists at least one $P \in \p$ such that $P \ll P^{0}$, i.e. $\inf_{P \in \p} KL(P || P^{0}) < \infty$. 
For $P^0=\initmeas$, further suppose the predictability assumption holds that there exists some constant $C$ such that for any $1 \le i \le k$, then
\begin{equation*}
    \cfrac{1}{C} \le \cfrac{dP_{\text{obs},i}}{dQ^{0}_i} \le C,
\end{equation*}
where $Q^0_i = P^0\circ\phi^{-1}_i$ and $P_{\text{obs},i}=P\circ\phi_i^{-1}$. 
Then 
$P^{n}$ converges in total variation to $P^{\infty} \in \p$ where $P^{\infty}$ is the information projection (I-projection) of $P^{0}$ onto $\p$, i.e., 
\[
P^{\infty} := \argmin_{P \in \p} KL(P || P^{0}).
\]    
\end{theorem}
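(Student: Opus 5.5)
We follow Csiszár's I-projection analysis and its adaptation to IPFP by Rüschendorf, working throughout with Radon--Nikodym derivatives with respect to $P^0$; Lemma~\ref{lemma:seq_con} makes this legitimate. Write $g_n := dP^n/dP^0$. By the proof of Lemma~\ref{lemma:for_bac}, each step multiplies by a function of $\phi_i$:
\[
g_n(\lambda) = g_{n-1}(\lambda)\,\rho_n(\phi_i(\lambda)), \qquad \rho_n := \frac{dP_{\text{obs},i}}{dQ^{n-1}_i},
\]
with $i=n \bmod k$. Hence $\log g_n = \sum_{j\le n}\log\rho_j\circ\phi_{i(j)}$ is a sum of functions each depending on a single $\phi_i$. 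This is the structural fact that identifies the limit as an I-projection.

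\textbf{Step 1 (Pythagorean identity).} For any $R\in\p$ with $KL(R\|P^0)<\infty$, we show
\[
KL(R\|P^{n-1}) = KL(R\|P^n) + KL(P^n\|P^{n-1}).
\]
This holds because $\log(dP^n/dP^{n-1}) = \log\rho_n\circ\phi_i$, and its integral against $R$ equals its integral against $P^n$, since $R$ and $P^n$ share the push-forward $P_{\text{obs},i}$. The two-sided predictability bound, propagated by induction, keeps every $\log\rho_n$ bounded, so these integrals are finite. Summing over $n$ gives $\sum_n KL(P^n\|P^{n-1}) \le KL(R\|P^0) < \infty$, so $KL(P^n\|P^{n-1})\to 0$. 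Pinsker's inequality then gives $\|P^n-P^{n-1}\|_{TV}\to 0$. Iterating the identity also shows that $KL(R\|P^n)$ is nonincreasing.

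\textbf{Step 2 (asymptotic feasibility).} Fix $i$ and take $n$ with $n\equiv i$. Then $Q^n_i = P_{\text{obs},i}$. Since push-forwards contract total variation, $\|P^{n+j}\circ\phi_{i}^{-1} - P_{\text{obs},i}\|_{TV}\le \sum_{l\le j}\|P^{n+l}-P^{n+l-1}\|_{TV}\to 0$ for $j<k$. So every constraint is satisfied in the limit along the whole sequence.

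\textbf{Step 3 (compactness and the limit).} This is the main obstacle, since TV convergence of $P^n$ itself is not yet known. We first try the following. Because $KL(R\|P^n)$ is bounded, the densities $dR/dP^n$, or equivalently, through the bounded $\log\rho$'s on each epoch block, the densities $g_{mk}$ along each residue class, lie in a set that is uniformly integrable under $P^0$. Dunford--Pettis then yields a weakly $L^1(P^0)$ convergent subsequence $g_{n_j}\to g_\infty$. By Step 1, the shifts $g_{n_j+l}$ converge to the same limit for each $l<k$. By Step 2, the limit $P^\infty$ lies in every $\p_i$, hence in $\p$. The limit also keeps the ``sum of functions of $\phi_i$'' form, since $\log g_\infty$ lies in the closed span of $\bigcup_i L^1(\sigma(\phi_i))$ (Csiszár's characterization; we work with a closure argument because bounded logs may not converge individually). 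Csiszár's theorem then says that a feasible $P^\infty$ whose log-density lies in this closure satisfies the Pythagorean equality $KL(R\|P^0)=KL(R\|P^\infty)+KL(P^\infty\|P^0)$ for all $R\in\p$, so $P^\infty$ is the I-projection. The I-projection is unique by strict convexity of $KL$ on the convex set $\p$, so every subsequence has the same limit. If the closure step proves delicate, we fall back on Rüschendorf's route. Lower semicontinuity gives $KL(P^\infty\|P^0)\le\liminf KL(P^n\|P^0)$. The identity $KL(P^n\|P^0)=\int\log g_n\,dP^n=\sum_j\int \log\rho_j\circ\phi\, dR + o(1)$ (obtained from Step 1) gives the reverse inequality against any $R\in\p$.

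\textbf{Step 4 (upgrade to total variation).} Take $R=P^\infty$ in Step 1. Then $KL(P^\infty\|P^n)$ decreases to some limit $L$. Weak convergence combined with lower semicontinuity, together with the identity $KL(P^\infty\|P^n)=KL(P^\infty\|P^0)-\int\log g_n\,dP^\infty$, forces $L=0$. Pinsker's inequality then gives $P^n\to P^\infty$ in total variation.
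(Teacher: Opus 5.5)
\textbf{There is a genuine gap, in Step 3.} Your Steps 1 and 2 are sound. Working with $g_n=dP^n/dP^0$ directly on $\Lambda$ is also a legitimate alternative to the paper's lift to $\mathcal{D}_1\times\cdots\times\mathcal{D}_k$ followed by disintegration, since each $g_n$ is a function of $(\phi_1,\dots,\phi_k)$. Step 3, however, is the step you identify as the main obstacle, and you do not actually resolve it.
\begin{itemize}
\item \emph{Bounded $KL(R\|P^n)$ does not give uniform integrability of $\{g_n\}$ under $P^0$.} Take $P^n=(1-\eta)R+\eta\mu_n$, with $d\mu_n/dP^0=P^0(B_n)^{-1}$ on $B_n$, $0$ elsewhere, and $P^0(B_n)\to0$. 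Then $KL(R\|P^n)\le\log\frac{1}{1-\eta}$, but $\int_{B_n}g_n\,dP^0\ge\eta$.
\item \emph{The ``bounded $\log\rho$'s'' do not help.} The bounds produced by your Step 1 induction deteriorate with $n$. Even bounds on the increments $\rho_n$ that were uniform in $n$ would not control the cumulative products $g_{mk}$.
\item \emph{The identification of the limit fails for the same reason.} Weak $L^1(P^0)$ convergence of $g_{n_j}$ carries no information about $\log g_{n_j}$. So the claim that $\log g_\infty$ lies in the closed span is asserted, not proved.
\item \emph{Your fallback and Step 4 need the same missing estimate.} The fallback rests on $KL(P^n\|P^0)-\int\log g_n\,dR=\sum_i\int h_i^n\,d(Q_i^n-P_{\text{obs},i})\to0$, where $h_i^n:=\sum_{j\le n,\ j\equiv i \pmod{k}}\log\rho_j$ are the \emph{cumulative} log-factors. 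Step 2 gives this only under something like $\sup_n\|h_i^n\|_\infty<\infty$. Your Step 4 relies on the same $o(1)$ term.
\end{itemize}

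\textbf{What would close it, and why the paper's bounds do not supply it.} Uniform control of these cumulative factors is what the paper's Step 3 is devoted to. It bounds $q^n_{\mathcal{D}_i}$ (your $e^{h_i^n}$ on the product data space) from above via Jensen and the monotonicity of $\int\log q^n_{\mathcal{D}_i}\,dP_{\text{obs},i}$, and from below via Cauchy--Schwarz. It then appeals to R\"uschendorf's Lemma~4.4 and Theorem~3.1. If you had $\sup_{i,n}\|h_i^n\|_\infty\le M$, your route would close more directly than the paper's:
\begin{itemize}
\item $e^{-kM}\le g_n\le e^{kM}$ gives uniform integrability;
\item the error term above is at most $2M\sum_i\|Q_i^n-P_{\text{obs},i}\|_{TV}\to0$;
\item lower semicontinuity then identifies every weak limit as the I-projection;
\item Step 4 gives total-variation convergence with no product-space IPFP and no disintegration.
\end{itemize}
Be aware, though, that such bounds do not follow from the theorem's hypotheses. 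The correct update divides $dP_{\text{obs},1}/dQ_1^0$ by the conditional expectation of $q^n_{\mathcal{D}_2}q^n_{\mathcal{D}_3}$ under $Q^0$ given the first coordinate. The paper's constant denominator $\int q^n_{\mathcal{D}_2}q^n_{\mathcal{D}_3}\,dQ_2^0\,dQ_3^0$ is only correct when $Q^0$ is a product measure, and its Jensen step inherits this.

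Here is a concrete counterexample to uniform bounds. Take $\Lambda=\{1,2\}^2$ with $P^0$ uniform, and let $\phi_1,\phi_2$ be the coordinates. Let $\phi_3(\lambda)=1$ if $\lambda_1=\lambda_2$ and $0$ otherwise. Let the observed measures give mass $0.8$, $0.8$, $0.6$ to $\{\phi_1=1\}$, $\{\phi_2=1\}$, $\{\phi_3=1\}$.
\begin{itemize}
\item The two-sided bound holds with $C=2.5$.
\item The unique element of $\p$ gives the point $(2,2)$ zero mass.
\item If the $h_i^n$ were uniformly bounded by $M$, then $P^n(\{(2,2)\})\ge e^{-3M}/4$ for all $n$.
\item A limit point exists by compactness of the simplex and lies in $\p$ by your Step 2, which contradicts this lower bound.
\end{itemize}
So Step 3 needs a genuine compactness-and-identification argument that tolerates unbounded potentials, not just pointwise bounds.
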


% Before we present the proof, we provide high-level comments to help provide some intuition and orient the reader on the steps involved.
% The proof is structured to both prove and utilize a generalization of results found in~\cite{Ruschendorf1995} which prove convergence of an iterative proportional fitting procedure (IPFP) for the $k=2$ case.
% We do note that there does exist some prior work on extending the IPFP method to multiple marginals in the context of the Sinkhorn algorithm and optimal transport, but these require stronger assumptions and are also not immediately suitable for the DCI context, e.g., see \cite{Marino2020}.
Before we present the proof, we provide high-level comments to help provide some intuition and orient the reader on the steps involved.
The proof we provide shows the $k=3$ case, and we note that the approach we provide is extended to the $k>3$ cases naturally. 
The first step in the proof constructs a product data space from the $k$ push-forward constraints along with a family of push-forward probability measures since each constraint defines a known marginal on this space. 
The second step in the proof involves expressing the Radon-Nikodym derivatives of the push-forward measures with respect to the push-forward of $P^0=\initmeas$, which provides a representation in the form of an IPFP on the product data space. 
To help build some intuition about the structure of these Radon-Nikodym derivatives and the relation to the IPFP method, suppose we equip $\mathbb{R}^3$ with the initial density $f^0(x,y,z)$ but that we wish to adjust this joint density so that the $x$-marginal is changed from $f^0_1(x):=\int_{\mathbb{R}^2}f^0(x,y,z)\, dy\, dz$ to $f_1(x)$.
This is easily accomplished through the following formula (assuming the support of $f_1(x)$ is a subset of the support of $f^0_1(x)$)
\begin{equation*}
    \cfrac{f^0(x,y,z)}{\int_{\mathbb{R}^2} f^0(x,y,z)\, dy\, dz}f^1(x) = f^0(x,y,z) a^1(x), 
\end{equation*}
where $a^1(x)=f^1(x)/f^0(x)$. 
Denote this new joint distribution $f^1(x,y,z)$. 
It follows that
\begin{equation*}
    \cfrac{f^1(x,y,z)}{f^0(x,y,z)} = a^1(x). 
\end{equation*}
Following a similar procedure to construct a joint distribution by adjusting each marginal of $f^0$ and redefining $f^1$ after each marginal adjustment leads to a change of density formula of the form
\begin{equation*}
    \cfrac{f^1(x,y,z)}{f^0(x,y,z)} = a^1(x)b^1(y)c^1(z)
\end{equation*}
where each function $a^1(x)$, $b^1(y)$, and $c^1(z)$ involves the associated marginal from $f^0$ and the target marginal of $f^1$. 
We make use of a similar projection algorithm for the Radon-Nikodym derivatives with respect to the induced push-forward of $P^0=\initmeas$.
The third step of the proof then shows that the sequence of Radon-Nikodym derivatives of the push-forward measures satisfies the conditions necessary to apply a generalization of the $k=2$ convergence results found in~\cite{Ruschendorf1995}.
The fourth step is to utilize disintegration of the subproblem solutions to exploit the convergence of their push-forward measures and complete the proof.

\begin{proof}
{\bf Step 1 (Construct the family of joint push-forward measures):} 
To set up the IPFP procedure, we consider the product space $\mathcal{D}:=\mathcal{D}_{1} \times \cdots \times \mathcal{D}_{k}$ with the product Borel $\sigma$-algebra denoted by $\B_\mathcal{D}$.
Define $\phi:\Lambda\to\mathcal{D}$ as the vector-valued map with $i$th component given by $\phi_i:\Lambda\to\mathcal{D}_i\subset\mathbb{R}^{d_i}$ for $1\leq i\leq k$. 
Let $Q^0:=P^0\circ\phi^{-1}$ denote the induced push-forward of $P^0=\initmeas$ on $(\mathcal{D},\B_\mathcal{D})$. 
As before, we let $P^n$ denote the solution to $\mathtt{Subproblem}_n$ and similarly define $Q^n:=P^n\circ \phi^{-1}$. 

For each $n$, the marginals of $Q^n$ coincide with the push-forward of $P^n$ through the associated component map, i.e., $Q^n_{i} = P^n\circ \phi_i^{-1}$ for each $i$ and $n\geq 1$.
For $n=0$, this implies that each marginal of $Q^0$ is simply defined in terms of the associated marginal of the push-forward of $P^0=\initmeas$, i.e.,  $Q_{i}^{0} = P^0\circ\phi_i^{-1}$ 
% \tmw{Should this be $\phi_i$?} Yes, fixed above.
for $1\leq i\leq 3$.
However, for $n\geq 1$, the DCI updating procedure implies that $Q^{i+mk}_{i} = P_{\text{obs},i}$ for $1\leq i\leq k$ and $m=0,1,\ldots$. 
Below, we show the specific details for the $k=3$ case, from which the $k>3$ case follows naturally.

{\bf Step 2 (Represent the Radon-Nikodym derivatives):} 
Since Lemma~\ref{lemma:seq_con} implies the absolute continuity results $P^0\gg P^n$ and $Q^0\gg Q^n$ for all $n$, the Radon-Nikodym derivatives $\cfrac{dQ^{n}}{dQ^{0}}$ exist for all $n$. 
By adapting the process for adjusting marginals of a joint density described prior to this proof, we have that the sequence of Radon-Nikodym derivatives are described as the product of measurable functions living on the $d_1$-, $d_2$-, and $d_3$-dimensional fibers of $\mathcal{D}_{1}$, $\mathcal{D}_{2}$, and $\mathcal{D}_{3}$ (resp.).
% To help build some intuition, suppose we equip $\mathbb{R}^3$ with the initial density $f^0(x,y,z)$ but that we wish to adjust this joint density so that the $x$-marginal is changed from $f^0_1(x):=\int_{\mathbb{R}^2}f^0(x,y,z)\, dy\, dz$ to $f_1(x)$.
% This is easily accomplished through the following formula (assuming the support of $f_1(x)$ is a subset of the support of $f^0_1(x)$)
% \begin{equation*}
%     \cfrac{f^0(x,y,z)}{\int_{\mathbb{R}^2} f^0(x,y,z)\, dy\, dz}f^1(x) = f^0(x,y,z) a^1(x), 
% \end{equation*}
% where $a^1(x)=f^1(x)/f^0(x)$. 
% Denote this new joint distribution $f^1(x,y,z)$. 
% It follows that
% \begin{equation*}
%     \cfrac{f^1(x,y,z)}{f^0(x,y,z)} = a^1(x). 
% \end{equation*}
% Following a similar procedure to construct a joint distribution by adjusting each marginal of $f^0$ to be some new functions leads to a change of density formula of the form
% \begin{equation*}
%     \cfrac{f^1(x,y,z)}{f^0(x,y,z)} = a^1(x)b^1(y)c^1(z)
% \end{equation*}
% where each function $a^1(x)$, $b^1(y)$, and $c^1(z)$ involves the associated marginal from $f^0$ and the target marginal. 
Specifically, after completing $n$ 3-epochs, we have constructed $Q^{3n}$ with the property that
\begin{equation*}
    \cfrac{dQ^{3n}}{dQ^{0}}= q^{n}_{\mathcal{D}_{1}}q^{n}_{\mathcal{D}_{2}}q^{n}_{\mathcal{D}_{3}}, 
\end{equation*}
for some $L^1$-functions $q_{\mathcal{D}_i}^n$ for $1\leq i\leq 3$. 
The next 3-iterations in the sequence subsequently adjust these Radon-Nikodym marginals as follows, 
\begin{equation*}
    \cfrac{dQ^{3n+1}}{dQ^{0}} = q^{n+1}_{\mathcal{D}_{1}}q^{n}_{\mathcal{D}_{2}}q^{n}_{\mathcal{D}_{3}}, \qquad q^{n+1}_{\mathcal{D}_{1}} = \frac{dP_{\text{obs},1}/dQ_{1}^{0}}{\int_{\mathcal{D}_{2} \times \mathcal{D}_{3}}q^{n}_{\mathcal{D}_{2}}q^{n}_{\mathcal{D}_{3}} dQ^{0}_{2}dQ^{0}_{3}}, 
\end{equation*}
\begin{equation*}
\cfrac{dQ^{3n+2}}{dQ^{0}} = q^{n+1}_{\mathcal{D}_{1}}q^{n+1}_{\mathcal{D}_{2}}q^{n}_{\mathcal{D}_{3}}, \qquad q^{n+1}_{\mathcal{D}_{2}} = \frac{dP_{\text{obs},2}/dQ_{2}^{0}}{\int_{\mathcal{D}_{1} \times \mathcal{D}_{3}}q^{n+1}_{\mathcal{D}_{1}}q^{n}_{\mathcal{D}_{3}} dQ^{0}_{1}dQ^{0}_{3}}, \end{equation*}
and 
\begin{equation*}
    \cfrac{dQ^{3n+3}}{dQ^{0}} = q^{n+1}_{\mathcal{D}_{1}}q^{n+1}_{\mathcal{D}_{2}}q^{n+1}_{\mathcal{D}_{3}}, \qquad q^{n+1}_{\mathcal{D}_{3}} = \frac{dP_{\text{obs},3}/dQ_{3}^{0}}{\int_{\mathcal{D}_{1} \times \mathcal{D}_{2}}q^{n+1}_{\mathcal{D}_{1}}q^{n+1}_{\mathcal{D}_{2}} dQ^{0}_{1}dQ^{0}_{2}},
\end{equation*}
%where $G_{1}^{0} = G^{0}\phi_{1}^{-1}$, $G_{2}^{0} = G^{0}\phi_{2}^{-1}$, and $G_{3}^{0} = G^{0}\phi_{3}^{-1}$. 
% By construction, each marginal of $Q^0$ is defined in terms of the associated marginal of the push-forward of $P^0=\initmeas$, i.e.,  $Q_{i}^{0} = P^0\circ\phi_1^{-1}$ for $1\leq i\leq 3$.
%since $G_{1}^{0}(S_{1}) = P^0(\phi_{1}^{-1}(S_{1}))$ for $S_{1} \in \mathcal{B}_{\mathcal{D}_{1}}$. 

{\bf Step 3 (Extend the IPFP convergence results):}
We first define $\q$ as follows
\begin{equation*}
    \q := \{Q \text{ on } \mathcal{D}_{1} \times \mathcal{D}_{2} \times \mathcal{D}_{3}: Q_1 = P_{\text{obs},1}, Q_2 = P_{\text{obs},2}, Q_3 = P_{\text{obs},3}\}.
\end{equation*}
The goal of this step is to extend the conclusion of Theorem 3.1 in~\cite{Ruschendorf1995} to the $k=3$ case to conclude that $Q^n$ converges in total variation to $Q^\infty:=\argmin_{Q\in\mathbb{Q}} KL(Q||Q^0)$. 
This requires an extension of several technical steps found in the proofs of Lemmas 4.1--4.4 in Section 4 of that same work.
Below, we extend these steps within the context of this work.

First, note that $\inf_{P \in \p} KL(P || P^{0}) < \infty$ implies $\inf_{Q \in \q} KL(Q || Q^{0}) < \infty$.
%By Lemma~\ref{lemma:for_bac}, and the fact that the KL divergence is both an $f$-divergence and a Bregman divergence, $KL(G || G^{0}) = KL(G || G^{3n}) + \sum_{m=1}^{3n}KL(G^{m} || G^{m-1}) < \infty$. 
By Lemma~\ref{lemma:for_bac} and the ``Pythagorean Law'' for the KL divergence of I-projections (cf.~equation (3.14) in~\cite{Csiszar1975}), $KL(Q || Q^{0}) = KL(Q || Q^{3n}) + \sum_{m=1}^{3n}KL(Q^{m} || Q^{m-1})$.
It follows that $\sum_{m=1}^{\infty}KL(Q^{m} || Q^{m-1})\leq KL(Q||Q^0) <\infty$, which implies that $KL(Q^m || Q^{m-1})\to 0$. 
Furthermore, by the 3-epoch iteration procedure, we have
\begin{align*}
     \sum_{m=1}^{3n}  KL(Q^{m} || Q^{m-1}) = \sum_{m=1}^n & \left[ \int_{\mathcal{D}_{1}} \log \left(\cfrac{q_{\mathcal{D}_{1}}^{m}}{q_{\mathcal{D}_{1}}^{m-1}}\right)\, dP_{\text{obs},1} + \int_{\mathcal{D}_{2}}\log\left(\cfrac{q_{\mathcal{D}_{2}}^{m}}{q_{\mathcal{D}_{2}}^{m-1}}\right)\, dP_{\text{obs,2}} \right. \\
     & \quad + \left.\int_{\mathcal{D}_{3}}\log\left(\cfrac{q_{\mathcal{D}_{3}}^{m}}{q_{\mathcal{D}_{3}}^{m-1}}\right)\, dP_{\text{obs},3}\right].
\end{align*}
By properties of the logarithm, this defines a telescoping sum, which simplifies to
\begin{equation*}
     \sum_{m=1}^{3n}  KL(Q^{m} || Q^{m-1})=\int_{\mathcal{D}_{1}}\log(q_{\mathcal{D}_{1}}^{n})\, dP_{\text{obs},1} + \int_{\mathcal{D}_{2}}\log(q_{\mathcal{D}_{2}}^{n})\, dP_{\text{obs},2} + \int_{\mathcal{D}_{3}}\log(q_{\mathcal{D}_{3}}^{n})\, dP_{\text{obs},3},
\end{equation*}
where we note that the final three integral terms in the telescoping sum associated with the $Q^0$ measure vanish since $\cfrac{dQ^{0}}{dQ^{0}}=1$, i.e., $q_{\mathcal{D}_i}^0=1$ for $1\leq i\leq 3$ implying each of the final three integral terms is identically zero. 

Since each term $\int_{\mathcal{D}_{i}}\log(q_{\mathcal{D}_{i}}^{n})\, dP_{\text{obs},i}$ for $1\leq i\leq 3$ is defined from a sum of KL divergences of probability measures, these terms define three separate monotonically non-decreasing sequences of non-negative numbers.
Additionally, since $\sum_{m=1}^{3n} KL(Q^m || Q^{m-1})$ is bounded, each of these sequences converges. 
We derive upper bounds for the $q_{\mathcal{D}_1}^n$ functions and note that only indices change to derive upper bounds for the other cases. 
\begin{align*}
    q^{n+1}_{\mathcal{D}_{1}} &= \frac{dP_{\text{obs},1}/dQ_{1}^{0}}{\int_{\mathcal{D}_{2} \times \mathcal{D}_{3}}q^{n}_{\mathcal{D}_{2}}q^{n}_{\mathcal{D}_{3}} dQ^{0}_{2}dQ^{0}_{3}}\\
        &= \frac{dP_{\text{obs},1}/dQ_{1}^{0}}{\int_{\mathcal{D}_{2} \times \mathcal{D}_{3}}q^{n}_{\mathcal{D}_{2}}q^{n}_{\mathcal{D}_{3}}\frac{dQ^{0}_{2}}{dP_{\text{obs},2}}\frac{dQ^{0}_{3}}{dP_{\text{obs},3}} \, dP_{\text{obs},2}\, dP_{\text{obs},3}}\\
        &= \frac{dP_{\text{obs},1}/dQ_{1}^{0}}{\int_{\mathcal{D}_{2} \times \mathcal{D}_{3}}\exp(\log(q^{n}_{\mathcal{D}_{2}}q^{n}_{\mathcal{D}_{3}}\frac{dQ^{0}_{2}}{dP_{\text{obs},2}}\frac{dQ^{0}_{3}}{dP_{\text{obs},3}} ))\, dP_{\text{obs},2}\, dP_{\text{obs},3}}\\
        &\le\frac{dP_{\text{obs},1}/dQ_{1}^{0}}{\exp(\int_{\mathcal{D}_{2} \times \mathcal{D}_{3}}\log(q^{n}_{\mathcal{D}_{2}}q^{n}_{\mathcal{D}_{3}}\frac{dQ^{0}_{2}}{dP_{\text{obs},2}}\frac{dQ^{0}_{3}}{dP_{\text{obs},3}} )\, dP_{\text{obs},2}\, dP_{\text{obs},3})}\\
        &\le \frac{dP_{\text{obs},1}/dQ_{1}^{0}}{\exp(\int_{\mathcal{D}_{2} \times \mathcal{D}_{3}}\log(q^{0}_{\mathcal{D}_{2}}q^{0}_{\mathcal{D}_{3}}\frac{dQ^{0}_{2}}{dP_{\text{obs},2}}\frac{dQ^{0}_{3}}{dP_{\text{obs},3}} )\, dP_{\text{obs},2}\, dP_{\text{obs},3})}\\
        &= \frac{dP_{\text{obs},1}/dQ_{1}^{0}}{\exp(\int_{\mathcal{D}_{2} \times \mathcal{D}_{3}}\log(\frac{dQ^{0}_{2}}{dP_{\text{obs},2}}\frac{dQ^{0}_{3}}{dP_{\text{obs},3}} )\, dP_{\text{obs},2}\, dP_{\text{obs},3})}.
\end{align*}
The first inequality is a consequence of Jensen's inequality.
The second inequality is due to the monotonic non-decreasing property previously established for $\int_{\mathcal{D}_{i}}\log(q_{\mathcal{D}_{i}}^{n})\, dP_{\text{obs},i}$ for $1\leq i\leq 3$.
The final equality follows by recalling that $q_{\mathcal{D}_i}^0=1$ for $1\leq i\leq 3$. 
By the assumptions of Theorem \ref{thm:main}, $\cfrac{dP_{\text{obs},i}}{dQ^0_i} \le C$. 
Thus, $q^{n+1}_{\mathcal{D}_{i}}$ is bounded from above for all $1 \le i \le 3$.
We now derive lower bounds by noting that
\begin{align*}
    q^{n+1}_{\mathcal{D}_{1}} &= \frac{dP_{\text{obs},1}/dQ_{1}^{0}}{\int_{\mathcal{D}_{2} \times \mathcal{D}_{3}}q^{n}_{\mathcal{D}_{2}}q^{n}_{\mathcal{D}_{3}} dQ^{0}_{2}dQ^{0}_{3}}\\
        &\ge \frac{dP_{\text{obs},1}/dQ_{1}^{0}}{|\int_{\mathcal{D}_{2} \times \mathcal{D}_{3}}(q^{n}_{\mathcal{D}_{2}})^{2}dQ^{0}_{2}dQ^{0}_{3}|^{1/2} |\int_{\mathcal{D}_{2} \times \mathcal{D}_{3}}(q^{n}_{\mathcal{D}_{3}})^{2}dQ^{0}_{2}dQ^{0}_{3}|^{1/2}}. %\textcolor{blue}{\text{looking back, do we really need this line?}}
\end{align*} 
By the assumptions of Theorem \ref{thm:main} again, $\cfrac{1}{C} \le \cfrac{dP_{\text{obs},i}}{dQ^0_i}$. 
Taking this together with the upper bound of $q^{n}_{\mathcal{D}_{i}}$ established above, the lower bound follows. 
We have thus established the necessary conditions for the conclusions of the extensions of Lemma 4.4 and thus Theorem 3.1 in~\cite{Ruschendorf1995} to apply.
In other words, we have that $Q^{n}$ converges in total variation to $Q^{\infty} := \argmin_{Q \in \q}KL(Q || Q^0)$.

{\bf Step 4 (Apply disintegration):} 
To show that $P^{n}$ converges as well, we note that $P^0$ admits disintegration through $\phi$ such that $P^0(\cdot) = \int_\mathcal{D} P^0_{\omega}(\cdot) dQ^0(\omega)$. 
By the nature of the procedure and construction of $Q^n$, $P^0_{\omega}$ stays unchanged, and thus $Q^n$ converges in total variation implies $P^n$ converges as well to the limit $P^\infty(\cdot)=\int_\mathcal{D} P^0_{\omega}(\cdot) dQ^\infty$.
\end{proof}

\begin{rem} 
In the above proof, the lower limit given in the predictability assumption, $\frac{1}{C}\leq \cfrac{dP_{\text{obs},i}}{dQ^0_i}$, is utilized in Step 3.
This ensures that $\sup_n \int_{\mathcal{D}_i} \log\left(|q_{\mathcal{D}_i}^n|\right)\, dP_{\text{obs},i}<\infty$ for each $1\leq i\leq 3$, which sets up the conditions for the conclusion of Lemma 4.4 in~\cite{Ruschendorf1995} to apply.
A practical workaround exists if this lower limit fails to hold for a given $P^0=\initmeas$. 
This is more easily explained in the context of densities.
Suppose the corresponding support of the marginal density associated with $Q^0_i$ extends beyond the support of the density associated with $P_{\text{obs},i}$ so that $dP_{\text{obs},i} / dQ^0_i$ is equal to zero on a set of positive measure in $\mathcal{D}_i$. 
After the first $k$-epoch, $P^k$ has the property that its associated marginal densities all share the same support as the observed marginal densities, which is also related to the conclusion of Lemma~\ref{lemma:seq_con}. 
This was similarly observed for the convergence of the original IPFP method in the $k=2$ case (cf.~the brief discussion following equation (2.8) in~\cite{Ruschendorf1995}). 
Thus, we can start with a $\initmeas$ that satisfies the upper bounds of the predictability assumption in Theorem~\ref{thm:main}, assume instead that $P^k$ admits the desired lower bound, and subsequently use this measure as the initial starting point of the sequence for which we prove convergence (i.e., redefine $P^k$ as $P^0$).
\end{rem}

The limit is clearly dependent upon the choice of $P^{0}=\initmeas$. 
As previously mentioned, this is typically specified by the user and reflects prior knowledge or beliefs about the unknown distribution, which is conceptually similar to the role of the prior distribution in Bayesian inference. 
However, in the absence of a specified initial distribution and assuming $\Lambda$ is a precompact subset of a finite-dimensional Euclidean space, we recommend using a uniform distribution as a default option based on the maximum entropy principle \cite{Guiasu1985}.
The following corollary is then a simple result of the limit being an I-projection. 

\begin{corollary}
If $P^{0}$ is the uniform measure and the conditions of Theorem~\ref{thm:main} hold, then $P^{\infty}$ has the maximal entropy compared to any other $P \in \p$.    
\end{corollary}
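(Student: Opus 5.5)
The plan is to combine the I-projection characterization from Theorem~\ref{thm:main} with the standard identity linking KL divergence against a uniform reference to differential entropy. Assume $\pspace$ is precompact with $0<\mu_\pspace(\pspace)<\infty$. Let $P^0$ be the normalized Lebesgue measure, $dP^0 = \mu_\pspace(\pspace)^{-1}\,d\mu_\pspace$.

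Theorem~\ref{thm:main} gives $P^\infty = \argmin_{P\in\p} KL(P\,||\,P^0)$. So it suffices to show that, on the set of measures absolutely continuous with respect to $P^0$, minimizing $KL(\cdot\,||\,P^0)$ is the same as maximizing entropy.

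For any $P\in\p$ we have $P\ll P^0$ by the definition of the sets $\p_i$. Hence $P$ has a Lebesgue density $\pi = dP/d\mu_\pspace$, and
\[
\frac{dP}{dP^0}(\lambda) = \mu_\pspace(\pspace)\,\pi(\lambda).
\]
Substituting into the definition of KL gives
\begin{align*}
KL(P\,||\,P^0) &= \int_\pspace \log\bigl(\mu_\pspace(\pspace)\,\pi(\lambda)\bigr)\, \pi(\lambda)\, d\mu_\pspace(\lambda)\\
&= \log \mu_\pspace(\pspace) - H(P),
\end{align*}
where $H(P) := -\int_\pspace \pi\log\pi \, d\mu_\pspace$ is the differential entropy. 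Since $\log\mu_\pspace(\pspace)$ is a constant independent of $P$, minimizing KL over $\p$ is equivalent to maximizing $H$ over $\p$. Therefore $H(P^\infty)\ge H(P)$ for every $P\in\p$.

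The one point needing care is finiteness, so that the subtraction is meaningful. The hypothesis of Theorem~\ref{thm:main} gives $\inf_{\p}KL(\cdot\,||\,P^0)<\infty$, so $KL(P^\infty\,||\,P^0)$ is finite. For any $P\in\p$ with $KL(P\,||\,P^0)=\infty$, the identity gives $H(P)=-\infty$, and the inequality holds trivially. On a set of finite Lebesgue measure, $H(P)$ is bounded above by $\log\mu_\pspace(\pspace)$, since KL is nonnegative. Hence $H$ takes values in $[-\infty,\log\mu_\pspace(\pspace)]$ and the identity is valid in the extended reals.

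I expect no real obstacle. The only subtlety is this bookkeeping of infinite values, together with the implicit requirement that $\pspace$ have finite positive measure so that a uniform $P^0$ exists.
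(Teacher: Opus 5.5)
Your proof is correct and follows the same route as the paper, which just remarks that the corollary is a direct consequence of $P^{\infty}$ being the I-projection of the uniform $P^0$ onto $\p$. Your identity $KL(P\,||\,P^0)=\log\mu_\pspace(\pspace)-H(P)$ is exactly the computation the paper leaves implicit, and your handling of infinite values together with the requirement $0<\mu_\pspace(\pspace)<\infty$ (the paper's precompactness assumption) adds rigor the paper omits.
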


\section{Numerical Examples}\label{sec:numerics}

We present two numerical examples to illustrate various aspects of the iterative approach. 
The first example involves the use of linear QoI maps on a low-dimensional parameter space.
This example is intended to help build intuition regarding the similarities and differences of the iterative solution to the solution obtained from using either the joint QoI distribution or the product of the observed QoI marginals on the product QoI space. 
It also serves to highlight the key feature that the iterative solution exists even in scenarios where the product of the observed QoI marginals fails to satisfy the predictability assumption. 
Such scenarios can arise, for instance, when the observational data for the components of a vector-valued QoI map are obtained asynchronously. 

The second example involves a high-dimensional parameter space for a partial differential equation model with various low- and high-dimensional QoI maps.
This example illustrates the computational utility of the iterative approach in cases where approximating the joint observational density is complicated due to a variety of factors including the dimension of the space, limited observational sample size, and potentially complex and nonlinear relationships between the various components. 

\subsection{Linear QoI Maps}\label{sec:example1}

\subsubsection{Numerical Setup}
We consider a 2-dimensional parameter space given by $\Lambda=[0,1]^2\subset\mathbb{R}^2$. 
% We first consider linear QoI maps. epochs are terminated once diagnostic (i.e., $\mathbb{E}_\text{init}(r)$) is more than 10\% from target value of unity. 
The initial distribution is taken to be uniform on $\pspace$.
The data-generating distribution is defined by assuming independent Beta distributions on both parameters. Specifically, we assume a $Beta(3,9)$ distribution on $\lambda_1$ and a $Beta(8,2)$ distribution on $\lambda_2$. 
We initially consider the linear QoI maps given by
\begin{align}
    Q_1(\lambda) &= 2\lambda_1 + \lambda_2, \label{eq:linear_Q1}\\
    Q_2(\lambda) &= 2.5\lambda_1 + 0.5\lambda_2. \label{eq:linear_Q2}
\end{align}

Computationally, we generate $\expnumber{1}{3}$ samples from both the initial and data-generating distributions.
The predicted and observed QoI samples are obtained by evaluating the QoI maps on the samples drawn from the initial and data-generating distributions, respectively.
These sample sets are shown in Figure~\ref{fig:ex1_param_and_QoI_samples}.
All densities are approximated with a standard Gaussian KDE with Scott's rule used to determine the bandwidth parameter~\cite{TS92}.

% \begin{figure}
%     \centering
%     \includegraphics[width=0.5\linewidth]{figures/ex1_linear_QoI_predicted_vs_observed.png}
%     \caption{The QoI samples from the initial and DG distributions using the QoI maps from~\eqref{eq:linear_Q1} and~\eqref{eq:linear_Q2}.}
%     \label{fig:ex1_linear_QoI_samples}
% \end{figure}
\begin{figure}
    \centering
    \includegraphics[width=0.25\linewidth]{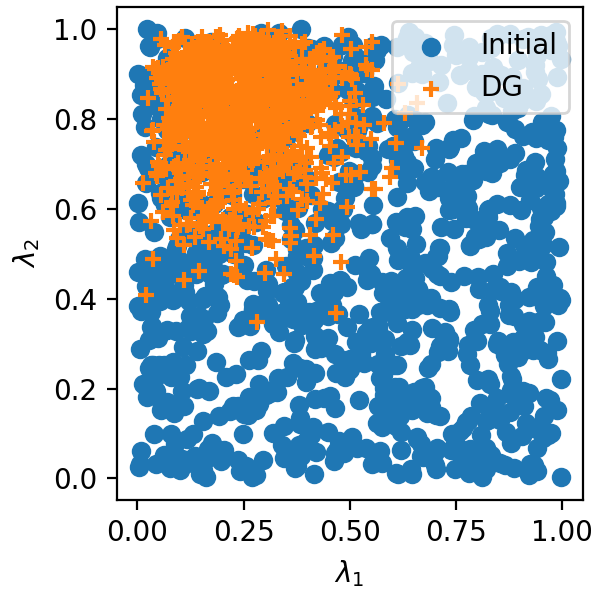}
    \includegraphics[width=0.25\linewidth]{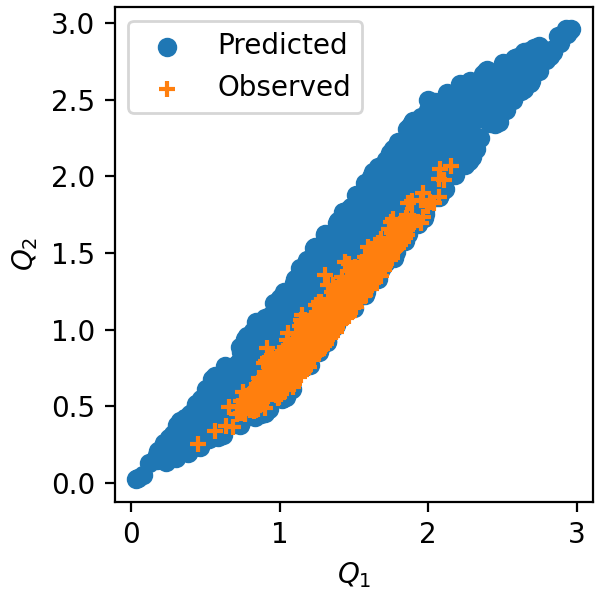}
    \caption{(Left) Uniform initial samples (blue circles) and data-generating samples (orange pluses) on $\Lambda=[0,1]^2\subset\mathbb{R}^2$. (Right) The predicted (blue circles) and observed (orange pluses) samples obtained by evaluating the QoI maps (eqs.~\eqref{eq:linear_Q1} and~\eqref{eq:linear_Q2}) on the initial and data-generating samples.}
    \label{fig:ex1_param_and_QoI_samples}
\end{figure}

For the algorithm, the stopping criteria were defined as follows.
The maximum of number of epochs was set to 100, which was determined by trial-and-error to ensure that the algorithm stopped due to other criteria.
The tolerance was set to 0.1 for the numerical diagnostic, i.e., if the sample average of the $r$-values deviates by more than 10\% from the theoretically expected value of unity, then the algorithm terminates.
This value of 0.1 was chosen based on the prior experience of the authors regarding typical variation seen in the diagnostic for DCI problems where the predictability assumption is satisfied. 
The tolerance for the KL divergence of all QoI marginals associated with the observed distribution and push-forward of the updated distribution after each epoch was set to $\expnumber{1}{-7}$.
In other words, this tolerance is reached at epoch $k$ only if every QoI marginal associated with the observed distribution and the push-forward of the updated distribution is less than $\expnumber{1}{-7}$.
This value was chosen based on some numerical experiments so that it was smaller than typically observed KL divergence values between KDE estimates obtained from different sample sets taken from the same distribution. 
Finally, the relative tolerance for the difference in such KL divergences obtained between successive epochs was set to $\expnumber{1}{-2}$. 
In other words, this tolerance is reached at epoch $k+1$ only if the KL divergences of every QoI marginal associated with the observed and updated distributions are within $1\%$ of those computed at epoch $k$.

\subsubsection{Iterative Results}

The relative tolerance stopping criteria was triggered at the 46th epoch where the push-forward of the updated distribution at this epoch produced marginal QoI densities that both had KL divergences from the associated observed marginals that were within 1\% from the values computed from the 45th epoch. 
We illustrate some of the results in Figures~\ref{fig:ex1_linear_epoch1_results}-\ref{fig:ex1_linear_epoch46_results}.
For reference, the top-right plot in each of these figures shows the KDE of the data-generating samples shown as the orange pluses in the left plot of Figure~\ref{fig:ex1_param_and_QoI_samples}.
The top-left and top-middle plots in each of these figures show the weighted KDE computed on the initial samples shown as the blue circles in Figure~\ref{fig:ex1_param_and_QoI_samples} with weights determined at the first (left) and second (middle) iteration of each epoch.
The bottom two plots in each of these figures show the KDE estimates of the marginal densities on $Q_1$ (left) and $Q_2$ (right) associated with the observed distribution and push-forward of the updated distribution at the end of each epoch.

\begin{figure}
    \centering
    \includegraphics[width=0.75\linewidth]{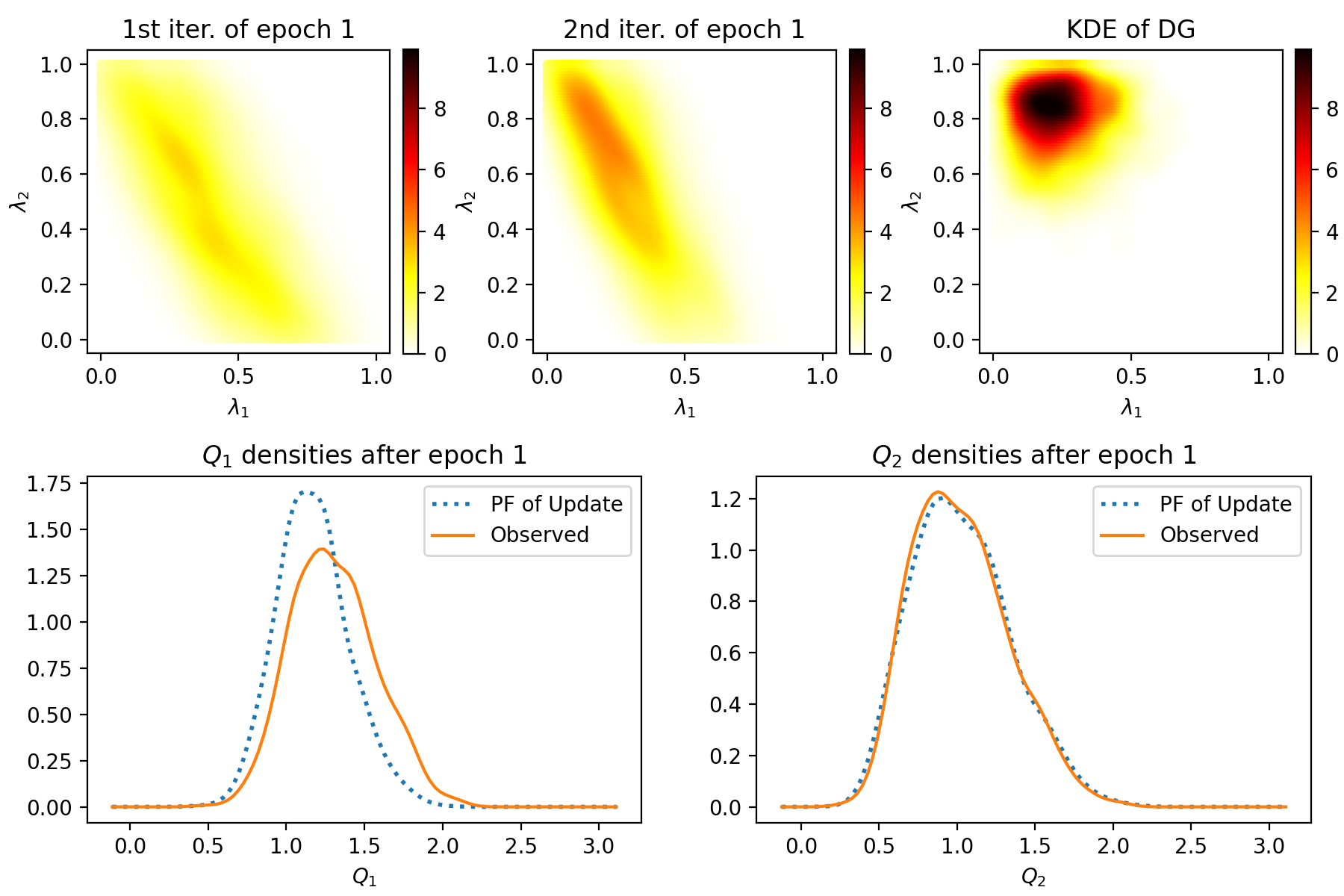}
    \caption{The KDE estimates of densities associated with Figure~\ref{fig:ex1_param_and_QoI_samples} after the first epoch of iterations. (Top Row): The updated density obtained after using the $Q_1$ map in the first iteration of this epoch (left), after using the $Q_2$ map in the second iteration of this epoch (middle), and the data-generating density estimate (right). (Bottom Row): The marginal QoI densities for $Q_1$ (left) and $Q_2$ (right) associated with the push-forward of the updated density (blue dashed curves) and observed data (orange solid curves).}
    \label{fig:ex1_linear_epoch1_results}
\end{figure}

The top-left and top-middle plots of Figure~\ref{fig:ex1_linear_epoch1_results} show a significant change both between the first two iterations of the first epoch as well as in relation to the uniform initial distribution.
The bottom two plots illustrate a few things worth noting.
First, at the end of the first epoch, the marginals associated with the push-forward of the iterative solution produces a better estimate of the $Q_2$ observed KDE than of the $Q_1$ observed KDE.
This is expected due to the alternating projection that occurs in the algorithm as well as the specific ordering of the maps. 
The reverse would be observed if we swapped the order of the maps although there would be little discernible difference in the final result obtained after the termination of the algorithm.
Second, the discrepancies between both of the marginals associated with the push-forward of the iterative updated distribution and the observed distribution are significant enough that more epochs are clearly necessary before any termination criteria are reached.

\begin{figure}
    \centering
    \includegraphics[width=0.75\linewidth]{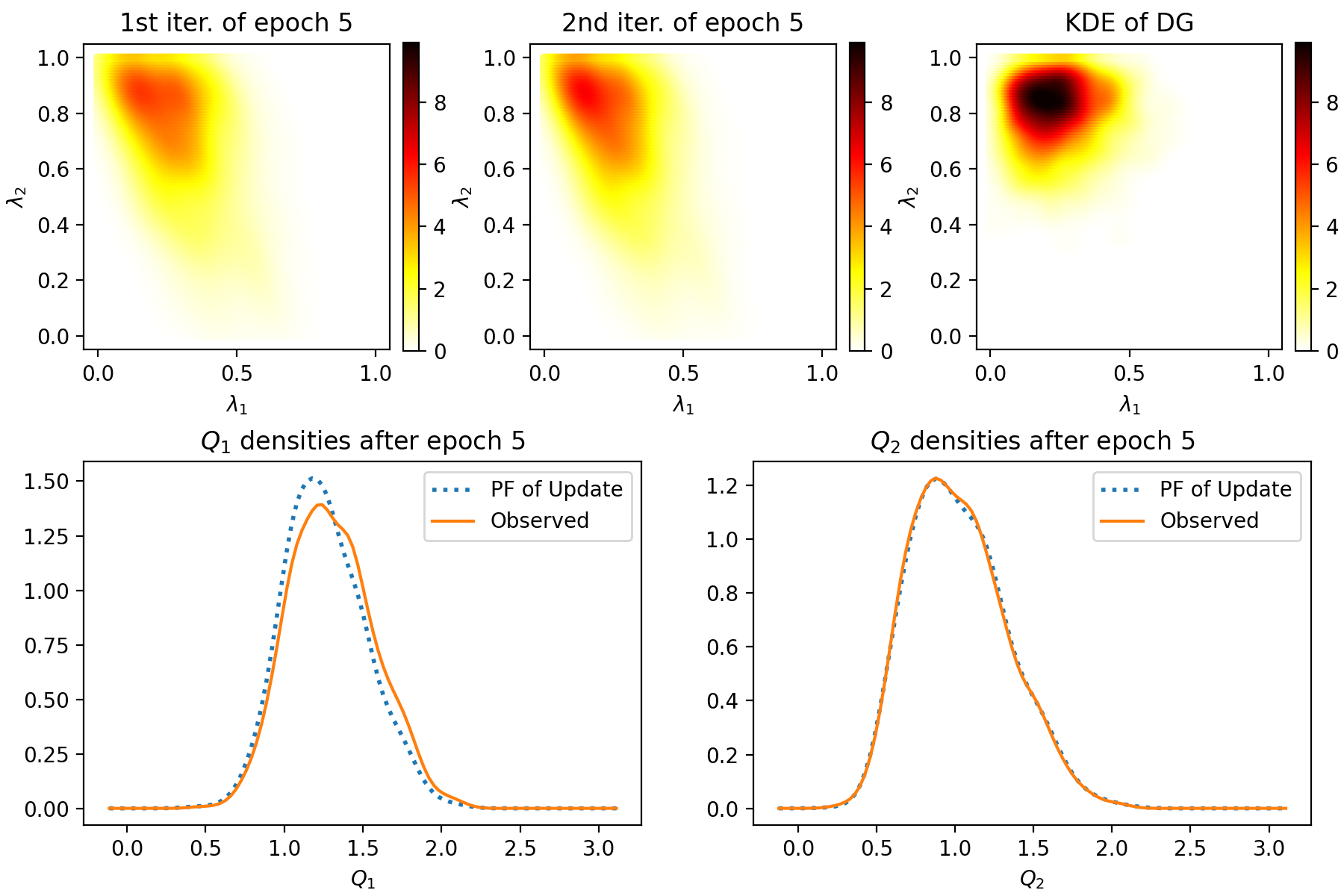}
    \caption{The KDE estimates of densities associated with Figure~\ref{fig:ex1_param_and_QoI_samples} after the fifth epoch of iterations. (Top Row): The updated density obtained after using the $Q_1$ map in the first iteration of this epoch (left), after using the $Q_2$ map in the second iteration of this epoch (middle), and the data-generating density estimate (right). (Bottom Row): The marginal QoI densities for $Q_1$ (left) and $Q_2$ (right) associated with the push-forward of the updated density (blue dashed curves) and observed data (orange solid curves).}
    \label{fig:ex1_linear_epoch5_results}
\end{figure}

The top-left and top-middle plots of Figure~\ref{fig:ex1_linear_epoch5_results} show that after five epochs, the difference in the solution between iterations within the epoch are not as large as with earlier epochs.
Moreover, more of the probability mass is becoming concentrated around the data-generating distribution samples as evidenced by comparing these plots with the corresponding plots of Figure~\ref{fig:ex1_linear_epoch1_results} and the KDE of the data-generating distribution shown as the top right plot.
While the bottom-right plot indicates solid agreement between the $Q_2$-marginals of the push-forward of the iterative updated distribution and the observed distribution after five epochs, the bottom-left plot indicates that convergence of the alternating projection is still not yet achieved across both densities.

\begin{figure}
    \centering
    \includegraphics[width=0.75\linewidth]{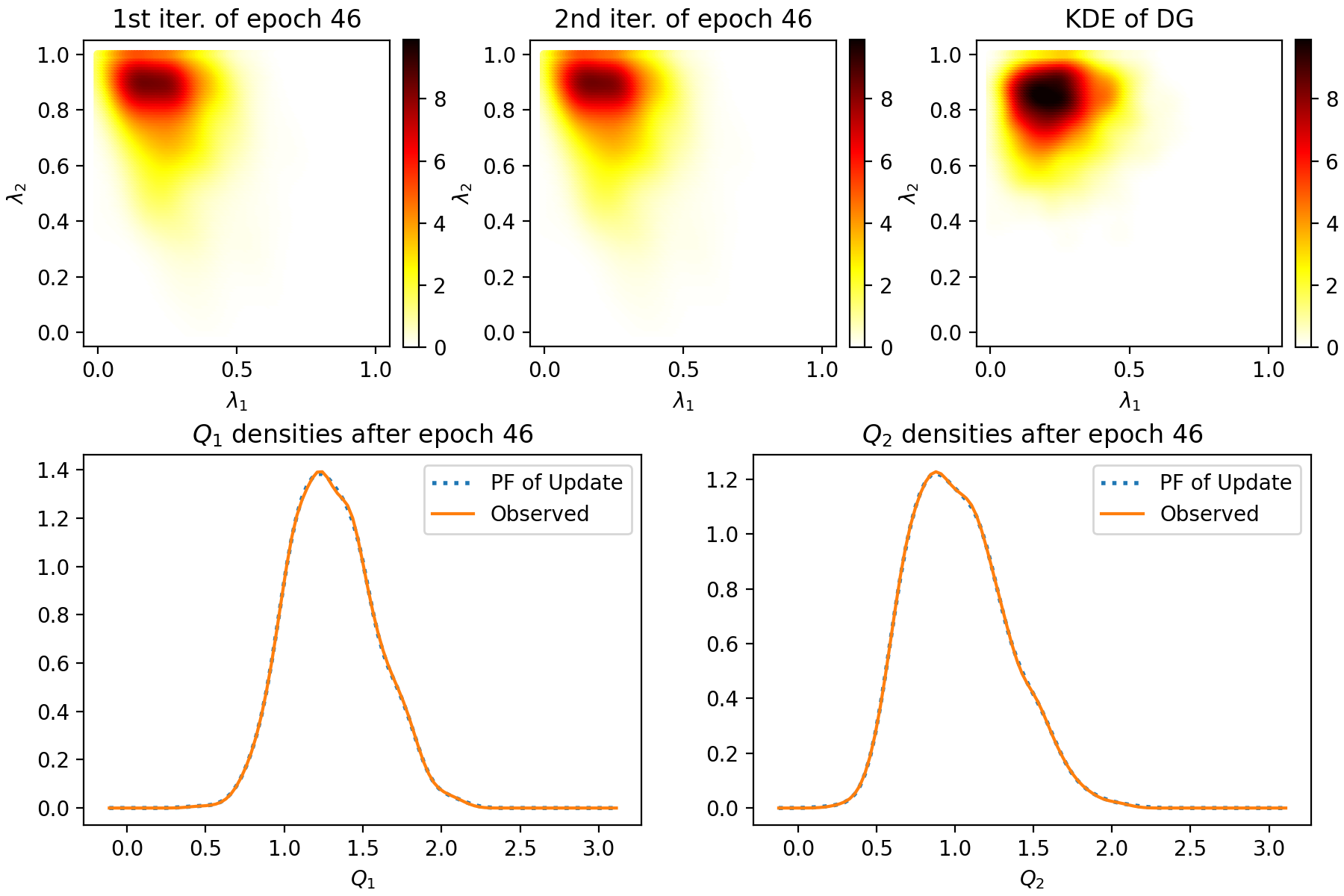}
    \caption{The KDE estimates of densities associated with Figure~\ref{fig:ex1_param_and_QoI_samples} after the final epoch (46th) of iterations. (Top Row): The updated density obtained after using the $Q_1$ map in the first iteration of this epoch (left), after using the $Q_2$ map in the second iteration of this epoch (middle), and the data-generating density estimate (right). (Bottom Row): The marginal QoI densities for $Q_1$ (left) and $Q_2$ (right) associated with the push-forward of the updated density (blue dashed curves) and observed data (orange solid curves).}
    \label{fig:ex1_linear_epoch46_results}
\end{figure}

The results at the 46th epoch where the algorithm terminated are shown in Figure~\ref{fig:ex1_linear_epoch46_results}.
While the termination criteria indicates these results are nearly identical to those obtained at the 45th epoch, the results are practically indistinguishable from any of those obtained after 20 epochs.
The interested reader can verify this with the supplementary material provided.
This provides numerical evidence that the tolerances utilized for identifying ``numerical convergence'' are sufficiently small for this example.
Consequently, it is difficult to identify any significant differences between results obtained at individual iterations of this epoch in the parameter space or in the marginals of the push-forward of the updated distribution at the end of the epoch.

\begin{figure}
    \centering
    \includegraphics[width=0.5\linewidth]{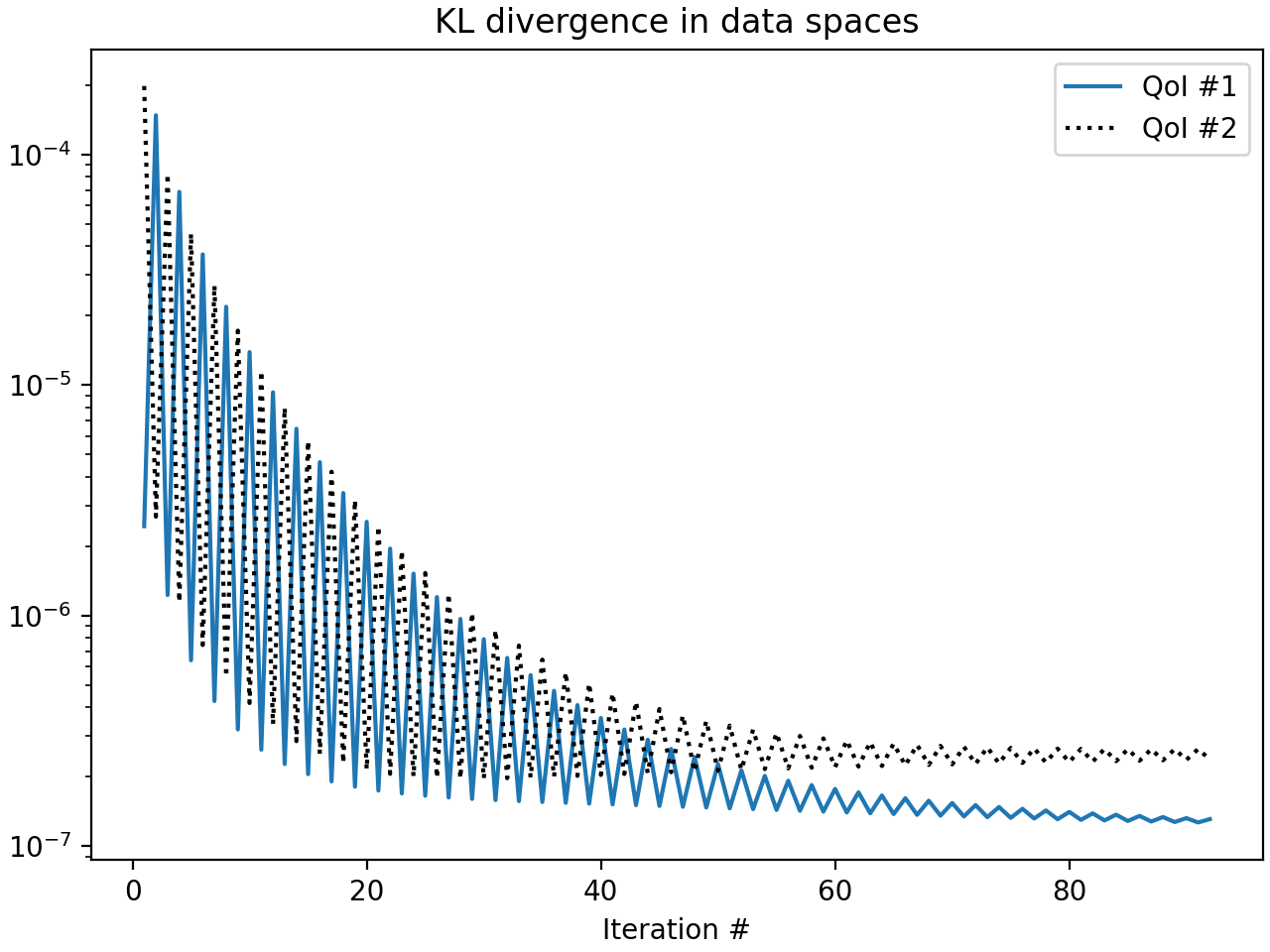}
    \caption{The KL divergences between the QoI marginals associated with the push-forward of the updated distribution at each iteration and the observed distribution. The results for the $Q_1$ marginal are shown as the blue solid curve and for the $Q_2$ marginal are shown as the black dotted curve.}
    \label{fig:ex1_linear_KL_data_iterations}
\end{figure}

Numerical evidence of convergence is further supported by Figure~\ref{fig:ex1_linear_KL_data_iterations}, which shows the rapid decrease in the KL divergence between the QoI marginals associated with the observed distribution and the push-forward of the updated distribution at each iteration.
Here, we show results for each iteration (of which there are 92 total) instead of each epoch (of which there are 46) to highlight the impact of alternating between the $Q_1$ and $Q_2$ distributions at each iteration within an epoch that produce oscillations in the decay trend. 
The magnitude of the oscillations is observed to quickly decay before essentially becoming flat with minor oscillations smaller than $\expnumber{1}{-6}$ in magnitude after 30-40 iterations (i.e., after 15-20 epochs). 

\subsubsection{Joint DCI Results}

\begin{figure}
    \centering
    \includegraphics[width=0.75\linewidth]{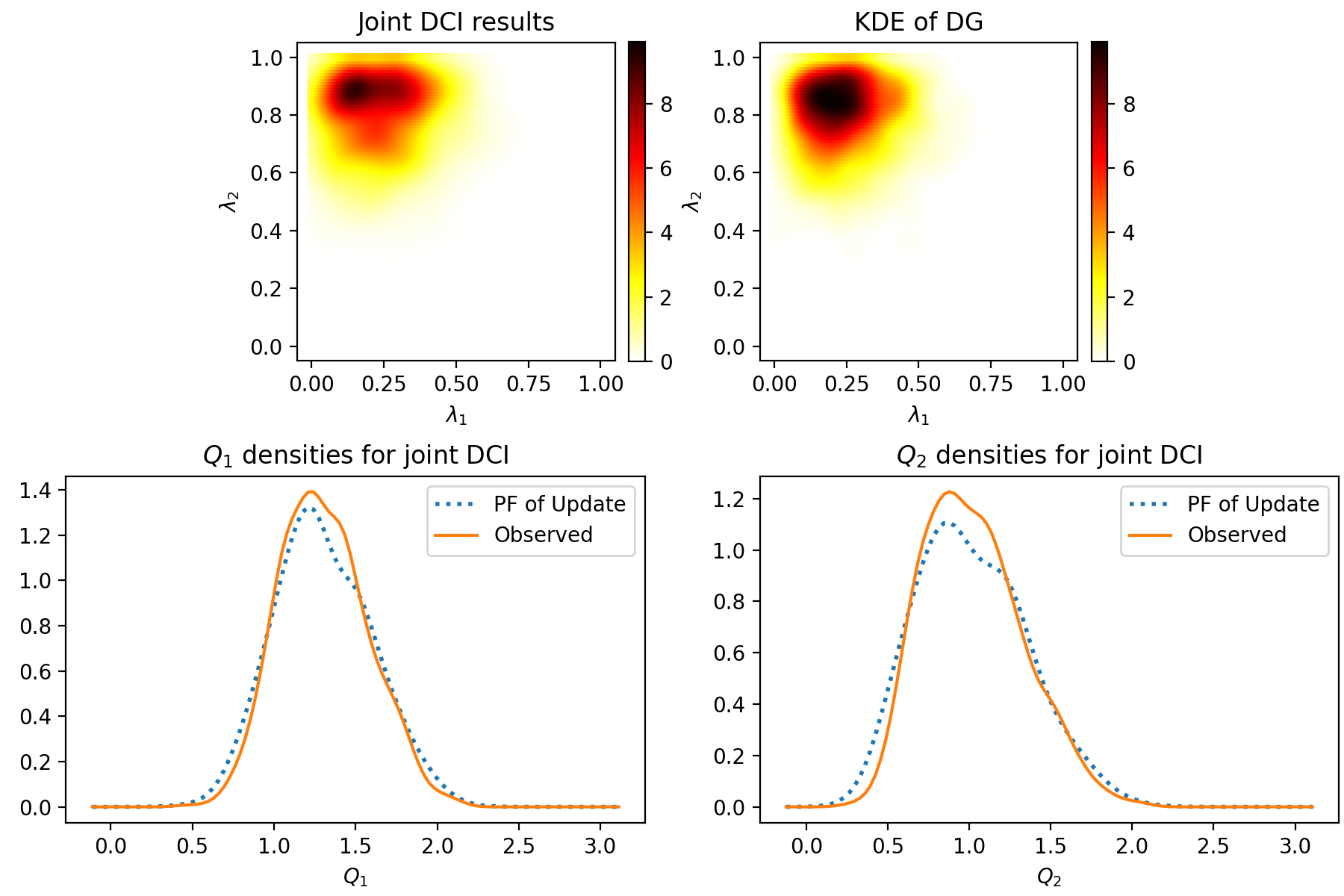}
    \caption{(Top Row:) The updated density obtained by using both the joint QoI data shown in Figure~\ref{fig:ex1_param_and_QoI_samples} simultaneously (left) and the data-generating density estimate (right). (Bottom Row): The marginal QoI densities for $Q_1$ (left) and $Q_2$ (right) associated with the push-forward of the updated density (blue dashed curves) and observed data (orange solid curves).}
    \label{fig:ex1_linear_joint_results}
\end{figure}

We now compare the above results obtained for the iterative example to those obtained from the joint inversion.
Figure~\ref{fig:ex1_linear_joint_results} summarizes the results obtained from the inversion of the observed distribution on the joint QoI space.
Note that this assumes that it is is possible to observe, simultaneously, the $Q_1$ and $Q_2$ data, which is not assumed in the iterative approach.
It is readily apparent from the QoI marginal plots that the joint approach suffers from some density approximation errors that the iterative approach was able to resolve over its epochs.
The errors in density approximation methods are often written as functions of sample size and dimension. 
However, in the authors' experience, such errors are exacerbated by several other factors.
One factor is the ``skewness'' of the joint data space qualitatively described as ``acute angles'' forming along boundaries of the data space.
Note that some skewness is observed in Figure~\ref{fig:ex1_param_and_QoI_samples}.
Another factor is boundary effects that can arise when the observed distribution has probability mass concentrated near the tail-end of a predicted distribution or in an area that is sparsely sampled such as near a boundary of the data space.
We again note that this is observed in Figure~\ref{fig:ex1_param_and_QoI_samples}.
Taken together, the diagnostic we obtain for the joint DCI inversion is approximately $1.138$, which is more than 10\% from the theoretical value of unity.
In the authors' experience, when the predictability assumption is theoretically satisfied by the exact densities, numerical approximations that produce diagnostics within 10\% of the target value of unity are expected due to approximation errors from finite sampling, and, in many cases, reasonable approximations of the updated density are still made with diagnostics that deviate up to $15-20\%$ from the target value of unity since the deviation is attributed to finite sample error and not a violation of the predictability assumption.
The interested reader can verify with the supplemental material that this can be reduced to $1.068$ by increasing the sample size of both the initial and data-generating distributions from $\expnumber{1}{3}$ to $\expnumber{1}{4}$.
Although, it is also worth noting that such an increase in sample sizes results in the iterative method converging at the 12th epoch (instead of the 46th epoch) due to every QoI marginal associated with the push-forward of the updated distribution being within the absolute KL tolerance of the associated observed QoI marginal.

\begin{figure}
    \centering
    \includegraphics[width=0.75\linewidth]{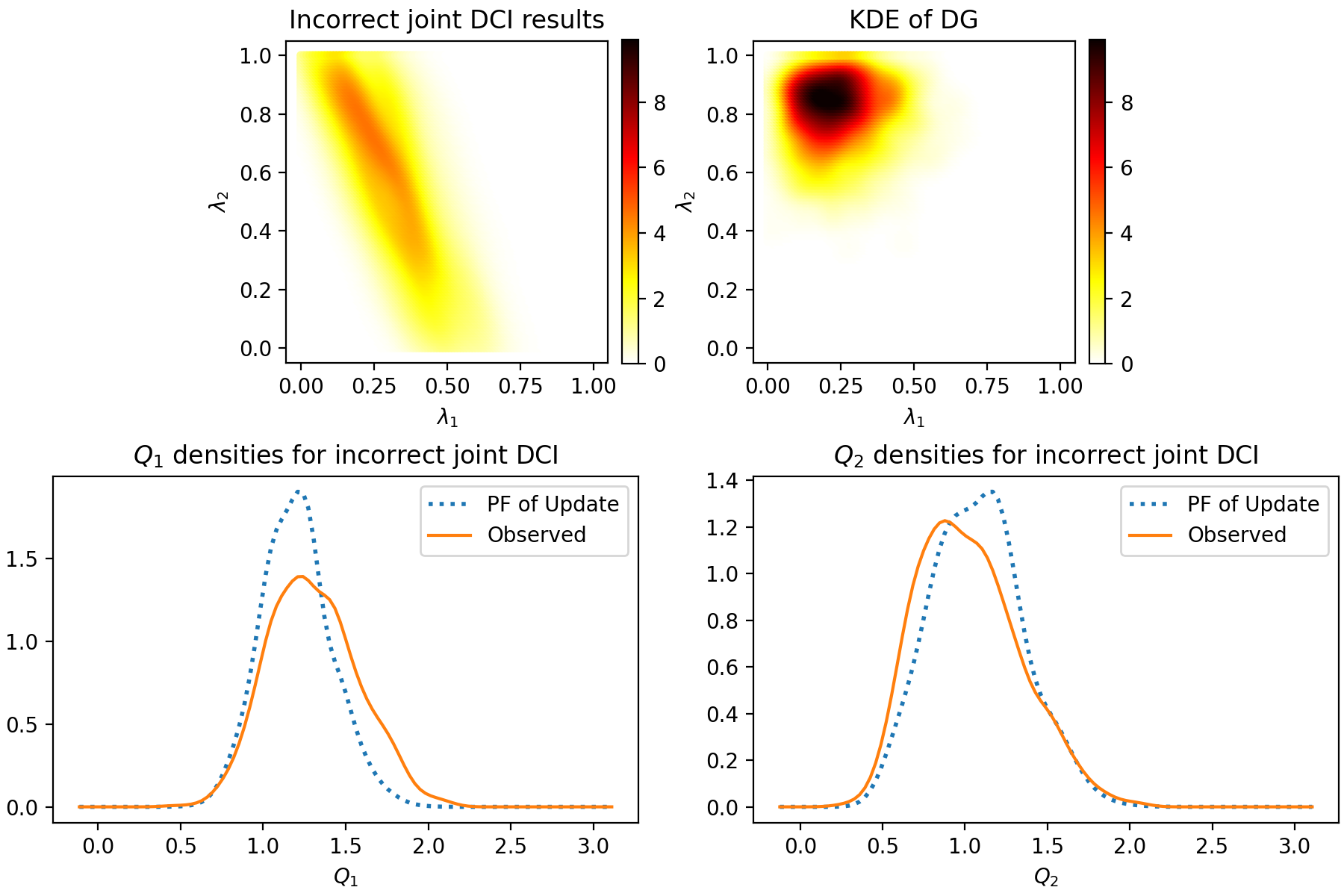}
    \caption{(Top Row:) The updated density obtained by incorrectly defining the observed QoI distribution in terms of the product of the marginals estimated from the QoI data shown in Figure~\ref{fig:ex1_param_and_QoI_samples} (left) and the data-generating density estimate (right). (Bottom Row): The marginal QoI densities for $Q_1$ (left) and $Q_2$ (right) associated with the push-forward of the updated density (blue dashed curves) and observed data (orange solid curves).
    This represents an incorrect approach to DCI on joint QoI data that results in a violation of the predictability assumption.}
    \label{fig:ex1_linear_wrong_joint_results}
\end{figure}

It is worth emphasizing that approximating the observed density as the product of its marginals, which ignores the clear relationship between $Q_1$ and $Q_2$ shown in Figure~\ref{fig:ex1_param_and_QoI_samples}, leads to a violation of the predictability assumption, and produces a diagnostic of $0.572$, which is not improved in any significant way by increasing sample size. 
In the interest of completeness, the results for this incorrect joint inversion are shown in Figure~\ref{fig:ex1_linear_wrong_joint_results}.
This ``wrong'' example illustrates an important, but perhaps subtle, point about Step 2 in the proof of Theorem~\ref{thm:main}.
Specifically, while a joint product space is utilized to represent the joint Radon-Nikodym derivative as a product of individual Radon-Nikodym derivatives for each QoI, this is with respect to the initial push-forward on the product space (i.e., $Q^0$) {\em not} the Lebesgue measure.
In other words, the structure of the data space is implicit within this theoretical construct.
While the iterative approach does not need to explicitly construct the product of these Radon-Nikodym derivatives, this ``wrong'' example is demonstrating that the iterative inversion obtained by this ``product of Radon-Nikodym derivatives'' is, in fact, distinct from constructing the product of marginal densities and performing a joint inversion.

% KL Divergence of updated density from the joint inversion to the DG density (both approximated with KDEs and computed on samples from the initial): 2.404e-05.

% KL Divergence of iteratively obtained updated density to the DG density (both approximated with KDEs): 1.012e-04

% KL divergences of push-forwards of updated densities from joint obserserved: 2.404e-05 (joint) and 1.929e-04 (iterative)

% KL divergences of marginals of push-forwards of updated densities from observed: 1.012e-05 ($Q_1$, joint), 1.683e-05 ($Q_2$, joint), and 1.283e-07 ($Q_1$, iterative), and 2.629e-07 ($Q_2$, iterative).

\subsubsection{Increasing the QoI Dimension}

To conclude this example, suppose we add the following third QoI to the vector-valued map, 
\begin{align}
    Q_3(\lambda) &= -\lambda_1 + \lambda_2. \label{eq:linear_Q3}
\end{align}
The result is a joint data space defined as a 2d-planar object embedded within $\mathbb{R}^3$ as shown in Figure~\ref{fig:ex1_linear_3_QoI_samples}.

\begin{figure}[htbp]
    \centering
    \includegraphics[width=0.5\linewidth]{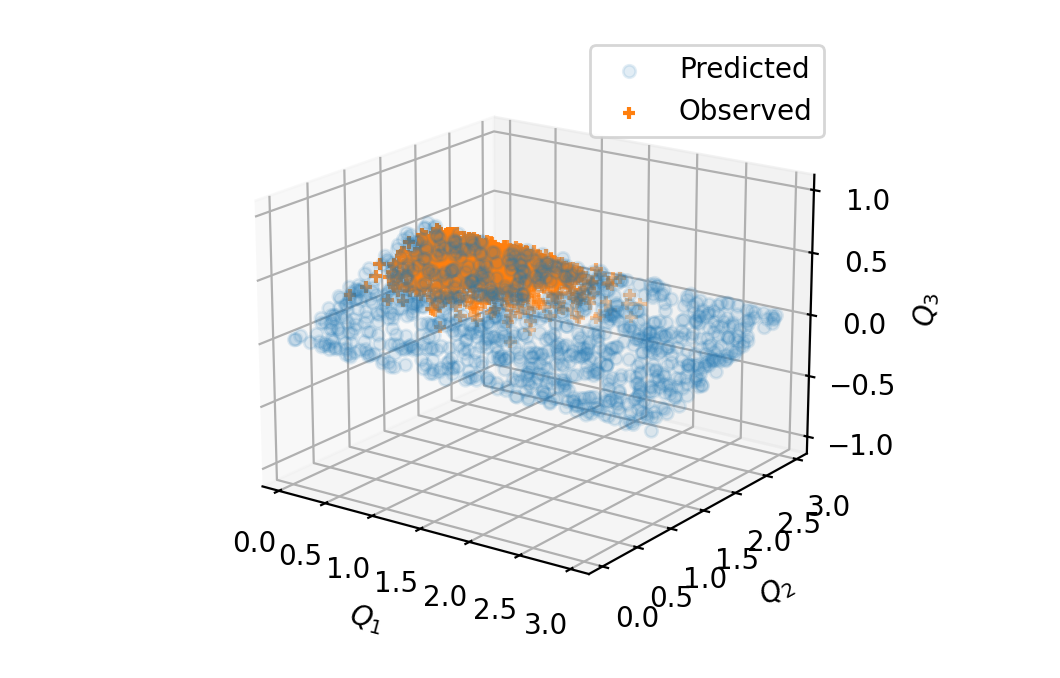}
    \caption{The joint data space for three linear QoI maps (eqs.~\eqref{eq:linear_Q1}-\eqref{eq:linear_Q3}) on a 2-dimensional parameter space results in a 2-dimensional planar object embedded in $\mathbb{R}^3$. The predicted (blue circles) and observed (orange pluses) samples associated with the initial samples shown in Figure~\ref{fig:ex1_param_and_QoI_samples} are shown here.}
    \label{fig:ex1_linear_3_QoI_samples}
\end{figure}

The joint DCI inversion now requires either more specialized methods for density estimation or pre-processing of the data before we can compute the updated density since traditional methods based on KDE fail due to the singular nature of the sample covariance matrix. 
The interested reader is encouraged to test this with the supplemental material where a linear algebra error occurs when a KDE is applied to the QoI data that correctly identifies the ``data appears to lie in a lower-dimensional subspace of the space in which it is expressed'' and the user is directed to ``Consider performing principal component analysis / dimensionality reduction'' to rectify the issue. 
While outside the scope of this work, there is active research on utilizing a machine-learning enhanced framework to construct low-dimensional QoI for DCI, e.g., see \cite{MSB+22, RHB25}. 

\begin{figure}
    \centering
    \includegraphics[width=0.75\linewidth]{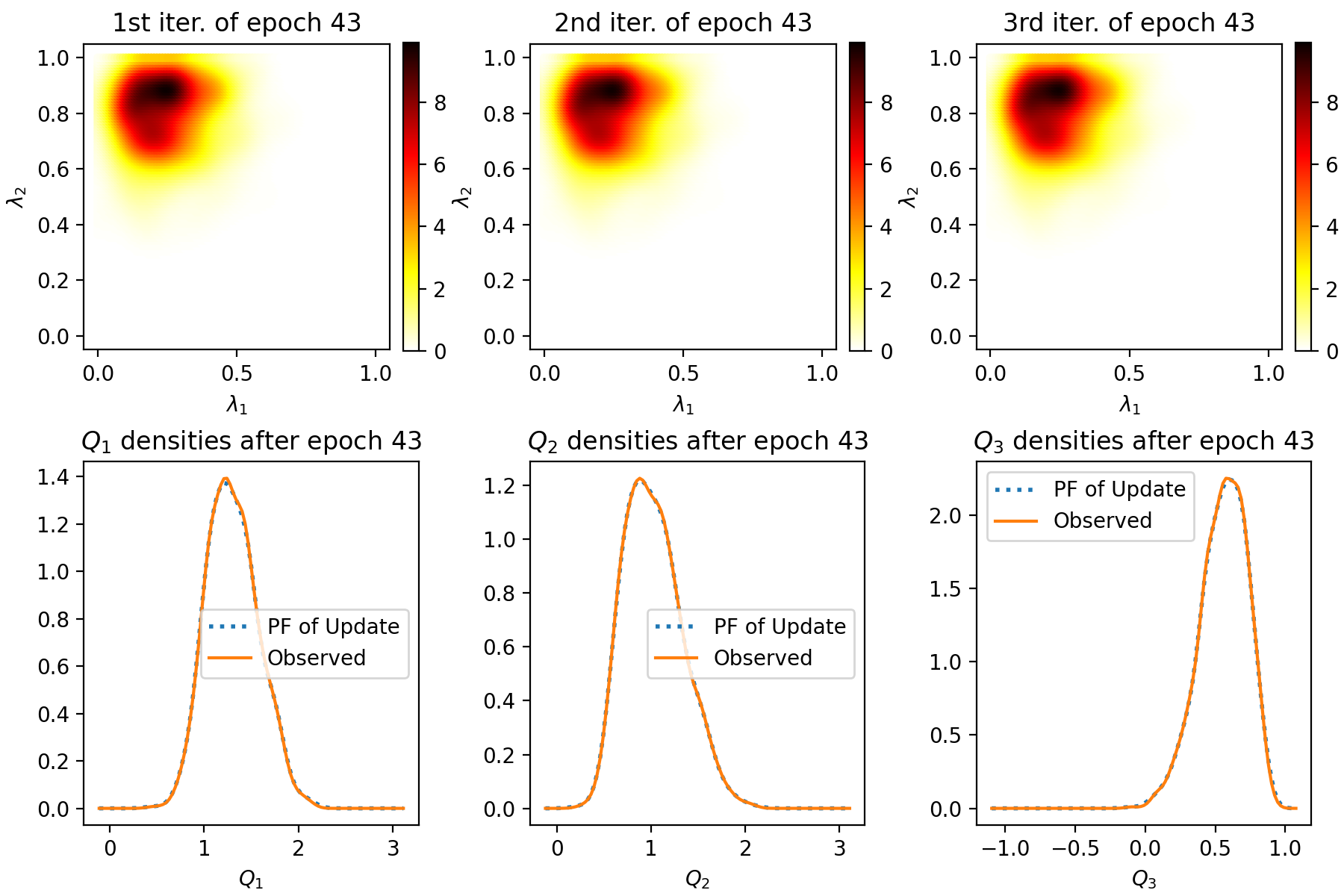}
    \caption{The KDE estimates of densities associated with parameter samples from Figure~\ref{fig:ex1_param_and_QoI_samples} and QoI samples from Figure~\ref{fig:ex1_linear_3_QoI_samples} after the final epoch (43rd) of iterations. (Top Row): The updated density obtained after using the $Q_1$ map in the first iteration of this epoch (left), after using the $Q_2$ map in the second iteration of this epoch (middle), and the $Q_3$ map in the third iteration of this epoch (right). (Bottom Row): The marginal QoI densities for $Q_1$ (left), $Q_2$ (middle), and $Q_3$ (right) associated with the push-forward of the updated density (blue dashed curves) and observed data (orange solid curves).}
    \label{fig:ex1_linear_3QoI_epoch43_results}
\end{figure}

The iterative method requires no such pre-processing of the data or the use of any specialized method for learning a density on a lower-dimensional manifold. 
In this case, the iterative algorithm terminated at the 43rd epoch due to the relative KL tolerance being reached.
The results of the final epoch are shown in Figure~\ref{fig:ex1_linear_3QoI_epoch43_results}. 
Since each epoch now includes three iterations, each iteration of the last epoch is shown in the top row of plots while the KDE of the data-generating distribution is omitted. 
However, comparing these to the prior top-row of plots involving two QoI (whether the iterative or joint results), we observe similar, if not better, qualitative agreement with the structure of the KDE of the data-generating distribution. 
This indicates that the inclusion of additional QoI that are ``geometrically redundant'' does not deteriorate the iterative DCI solution in this example whereas the joint solution is not even computable unless more sophisticated methods are utilized as mentioned above.

\subsection{A Higher-dimensional PDE-based Example}\label{sec:example2}

\subsubsection{Numerical Setup}

We adopt an example from~\cite{BJW18a} used to illustrate joint DCI solutions in high-dimensional spaces.
Specifically, we consider the following single-phase incompressible flow model:
\begin{equation}\label{eq:porous}
\begin{cases}
    -\nabla \cdot (K(\lambda) \nabla p) = 0, & \mathbf{x}=(x_1,x_2)\in\Omega = (0,1)^2,\\
    p = 1, & x_1=0, \\
    p = 0, & x_1=1, \\
    K\nabla p \cdot \mathbf{n} = 0, & x_2=0 \text{ and } x_2=1.
    \end{cases}
\end{equation}
Here, $p$ is the pressure field and $K$ is the permeability field which we assume is a scalar field given by a Karhunen-Lo\`eve expansion of the log transformation, $Y = \log{K}$, with
\begin{equation*}
    Y(\lambda) = \overline{Y} + \sum_{i=1}^\infty \lambda_i\sqrt{\eta_i}f_i(x_1,x_2),
\end{equation*}
where $\overline{Y}$ is the mean field and $\lambda_i$ are mutually uncorrelated random variables with zero mean and unit variance \cite{ganis2008stochastic,wheeler2011multiscale}.
Various QoI for this problem are considered from the evaluation of the pressure and/or norms of the velocity field, given by $-K\nabla p$, at various points within the physical domain $\Omega$ while the parameters are given by the $\lambda$ inputs to the permeability field.
We refer to $\lambda_i$ as the KL modes, which should not be confused with the KL divergence.
The eigenvalues, $\eta_i$, and eigenfunctions, $f_i$, are computed numerically using the following covariance function,
\begin{equation*}
    C_Y({\mathbf x},{\mathbf x}^\prime) = \sigma_Y^2 \exp \left[ -\frac{(x_1-{x}^\prime_1)^2}{2\zeta_1} - \frac{(x_2-{x}^\prime_2)^2}{2\zeta_2}\right],
\end{equation*}
where $\sigma_Y$ and $\zeta_i$ denote the variance and correlation length in the $i$th spatial direction, respectively.
For a given correlation length, the expansion is typically truncated once a sufficient fraction of the energy in the eigenvalues is retained~\cite{zhang2004efficient,ganis2008stochastic}. 
However, we set the correlation length to $0.1$ in each spatial direction and truncate the expansion at 100 terms for the sake of demonstration since these were observed to produce significant variability in both the permeability fields and ranges of potential QoI values.  

To approximate solutions to the PDE in~\eqref{eq:porous} we use the open source finite element software MrHyDE~\cite{mrhyde_user,mrhyde2023github} to implement a mixed finite element discretization with a physics-compatible discretization using the lowest-order Raviart-Thomas elements to approximate the velocity field, defined as $\mathbf{u} = -K \nabla p$, and a piecewise constant approximation of the pressure on a ($100\times 100$) spatial grid of $\Omega$.
In Figure~\ref{fig:ex2_perms}, we plot two random realizations of the permeability field generated using the truncated KL expansion.
\begin{figure}
    \centering
    \includegraphics[width=0.8\linewidth]{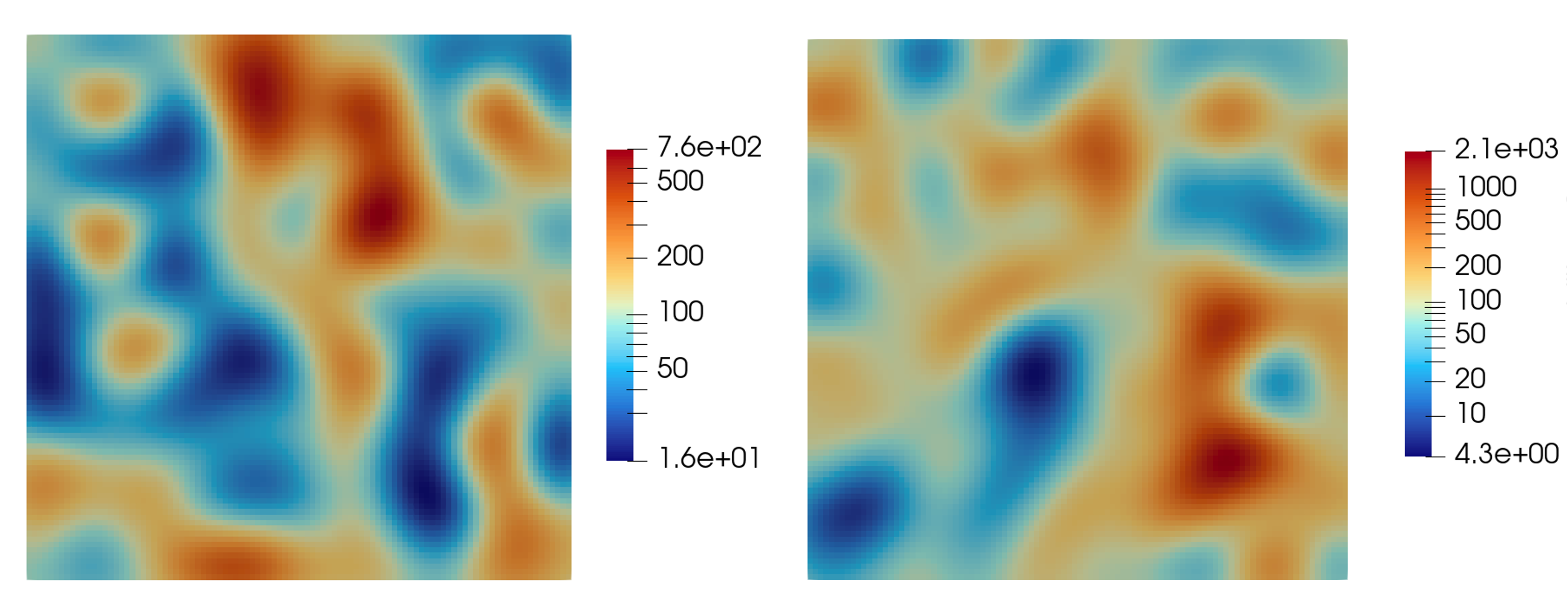}
    \caption{Two random realizations of the permeability field generated using the truncated KL expansion.}
    \label{fig:ex2_perms}
\end{figure}
Correspondingly, in Figures~\ref{fig:ex2_pressures} and ~\ref{fig:ex2_magvel} we plot the pressure fields and the magnitude of the velocity fields for each of these random realizations.
\begin{figure}
    \centering
    \includegraphics[width=0.8\linewidth]{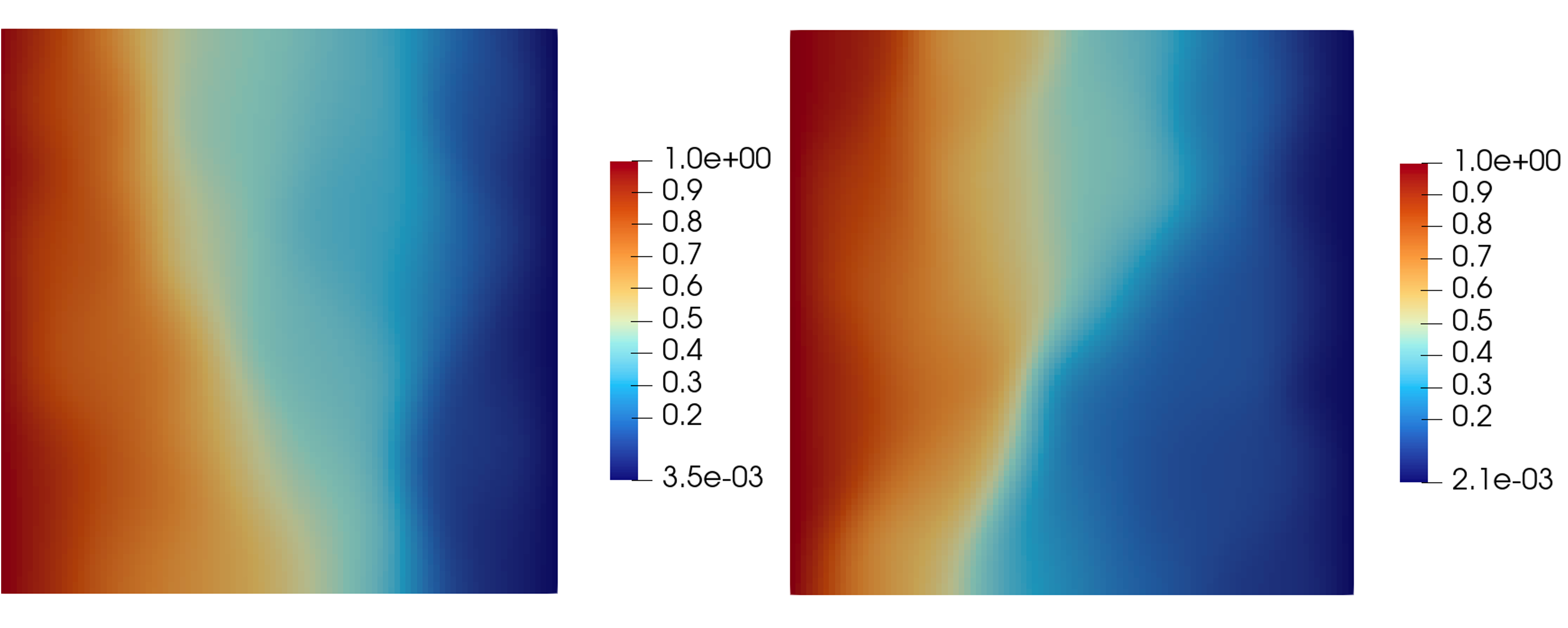}
    \caption{The approximated pressure fields corresponding to the two random realizations of the permeability field generated using the truncated KL expansion.}
    \label{fig:ex2_pressures}
\end{figure}
\begin{figure}
    \centering
    \includegraphics[width=0.8\linewidth]{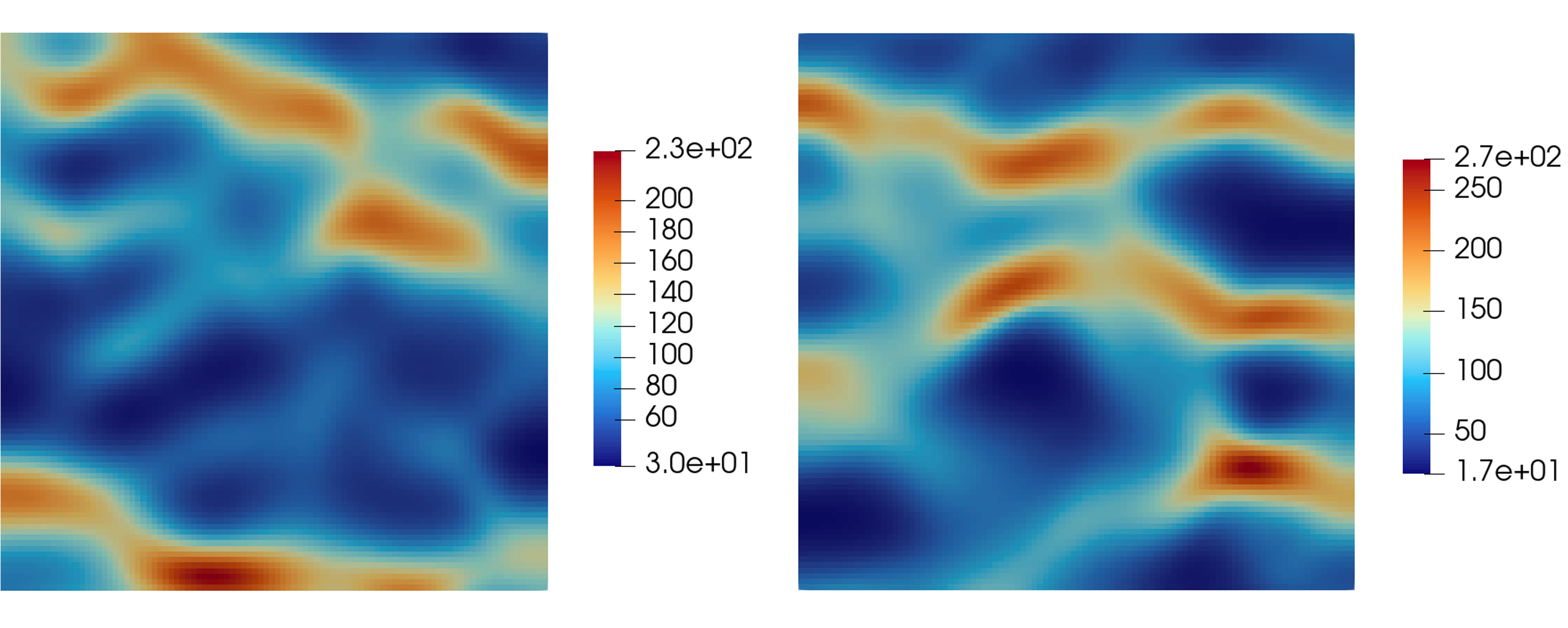}
    \caption{The magnitude of the approximated velocity fields corresponding to the two random realizations of the permeability field generated using the truncated KL expansion.}
    \label{fig:ex2_magvel}
\end{figure}
To define our QoI, we generate 1000 random sensor locations throughout $\Omega$ and measure both the pressure and velocity at these locations.  
Only a small subset of these were actually used for the numerical results in this paper.

We then generated $\expnumber{1}{5}$ samples from both the initial and data-generating distributions, which we define below. 
First, we utilize the assumptions of the Karhunen-Lo\`eve expansion to define the initial distribution as a multivariate standard normal density $\initdens \sim N({\mathbf 0},{\mathbf I})$ where ${\mathbf I}$ is the standard identity matrix.
The data-generating distribution is defined as a multivariate normal density $N({{\lambda}}_{\DG}, {\Sigma}_{\DG})$ where ${\Sigma}_\DG = 0.25 \mathbf{I}$ and the 100-dimensional vector ${\lambda}_\DG$ is given by
\begin{equation*}
    {\lambda}_{\DG,i} = \begin{cases}
        0.3, & i=1, 3, \ldots, 15, \\
        -0.3, & i=2, 4, \ldots, 16, \\
        0, & i>16. 
    \end{cases}
    % \ \text{ and } \ 
    % {\alpha}_{i} = \begin{cases}
    %     0.5, & i\leq 16, \\
    %     1, & i>16. 
    % \end{cases}
\end{equation*}

As discussed in~\cite{BJW18a}, an advantage of the DCI method is that we work directly with functions of the form $r(Q(\lambda))$ and the QoI often define data spaces that are lower-dimensional than the parameter space.
In that work, a single QoI is utilized defining a 1-dimensional data space for which the DCI solution is computed by applying a standard Gaussian KDE to approximate the predicted and observed densities.
However, such KDE methods become impractical as the dimension of the data space increases.
Generally, KDE methods are considered most effective and reliable for low-dimensional data with a common rule of thumb to restrict application of KDE methods to problems of dimension no greater than four or five.
For instance, in the \verb|R| statistical package \verb|ks|, grid-based KDE computation is only performed automatically for dimensions up to four, requiring specific evaluation points for higher dimensions.
When the standard multivariate Gaussian is the target density, a study presented in \cite{NC16} demonstrated that to achieve a KDE with accuracy comparable to 50 samples in one dimension, more than $\expnumber{1}{6}$ samples are required in ten dimensions. 

\begin{figure}
    \centering
    \includegraphics[width=0.5\linewidth]{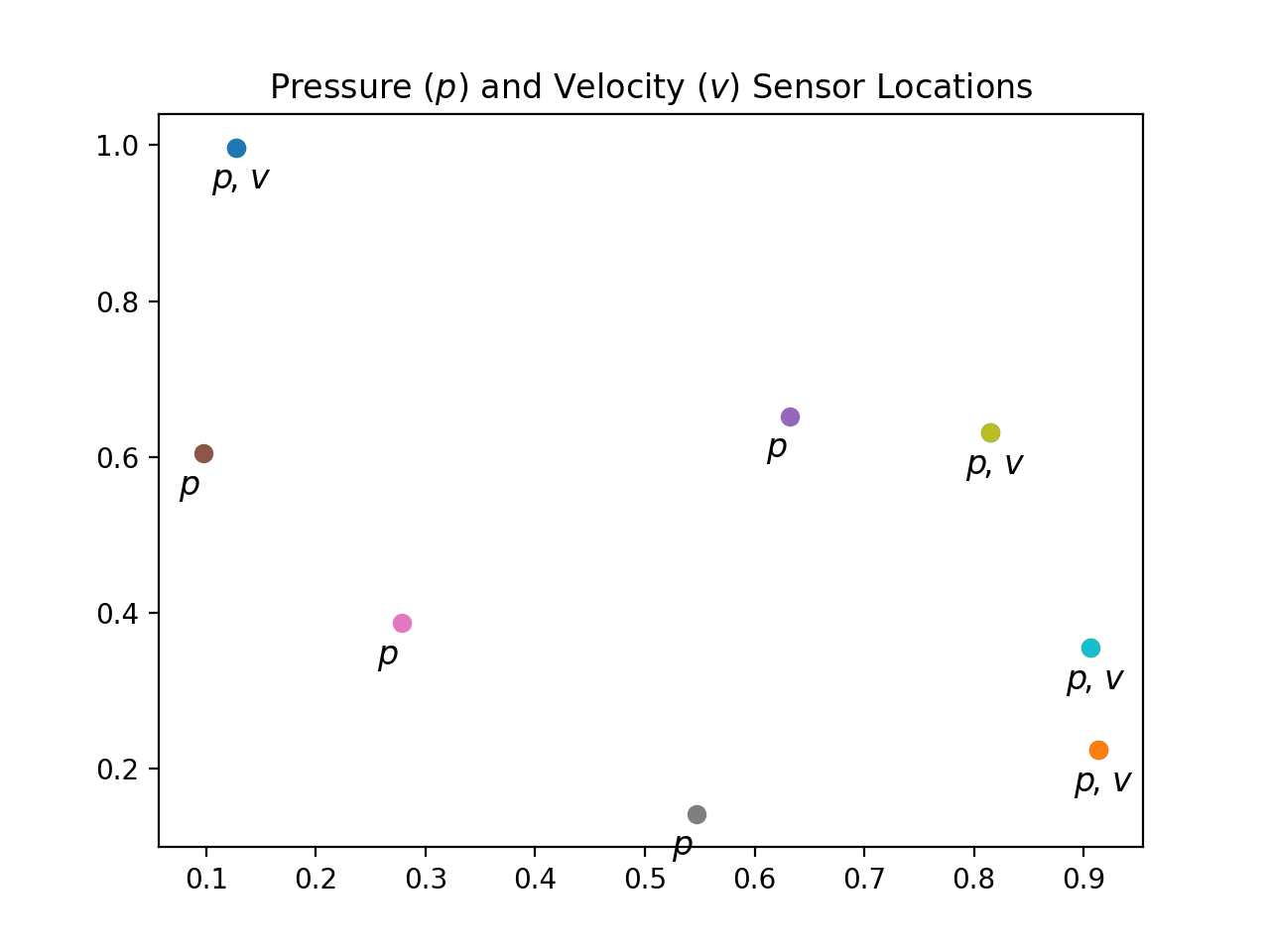}
    \caption{The first eight randomly generated locations for sensors are used to generate a 12-dimensional QoI map. The first four measurement locations produce both pressure and velocity data and the other four measurement locations produce only pressure data. Sensor locations are illustrated by the dots and the type of measurement data produced is shown as either a $p$ for pressure data only or as $p, v$ if both pressure and velocity data are observed.}
    \label{fig:ex2_sensors}
\end{figure}

In the results shown in this work, we demonstrate both the utility and flexibility of the iterative DCI solution by considering two different GSIPs defined by a 12-dimensional QoI map that contains the two types of measurements (pressure and velocity) obtained from randomly placed sensors within the domain.
Specifically, the first four sensors are assumed to measure both the pressure and the norm of the velocity field, which define eight of the QoI, and the last four sensors are assumed to measure only the pressure, which defines the other four QoI.
The locations of these eight sensors are shown in Figure~\ref{fig:ex2_sensors}. 
In the supplementary material, we include the scripts to run MrHyDE and regenerate the full datasets.
The datasets included in the supplementary material allow the user to define various QoI maps of different dimension from 1000 randomly generated sensor locations for which either pressure or velocity (or both) measurements can be chosen.
The eight sensor locations chosen to construct the 12-dimensional QoI map in this example are simply the first eight randomly generated locations of this dataset. 
For the first GSIP considered, we iterate through all 12 components of this map.
For the second GSIP considered, we assume that the pressure and velocity data are obtained simultaneously.
In that case, the four separate 2-dimensional joint observed and predicted densities are estimated and utilized in the iterative algorithm defining a total of eight QoI subspaces that are iterated over in the algorithm. 
By utilizing the iterative DCI approach in both of these GSIPs, we avoid the complications inherent in approximating non-parametric distributions in high-dimensions with KDEs. 
As in the previous example, all densities are approximated with a standard Gaussian KDE with Scott's rule used to determine the bandwidth parameter~\cite{TS92}.
Since there are significantly more push-forward constraints to simultaneously satisfy in this example compared to the previous example, we relax the absolute KL divergence tolerance to $\expnumber{1}{-6}$ in this example while leaving the remaining stopping criteria for the algorithm identical to those utilized in the previous example.
Additionally, since the scales of the pressure and velocity data are significantly different, we apply a standard scaling to all QoI data before any KDEs are applied so that all visualizations of the QoI densities are at similar scales. 

It is worth noting that by approximating so many densities, including several bivariate densities in the second GSIP, we risk a numerical violation of the assumption that $P_{\text{obs},i}=P\circ\phi_i^{-1}$ for each $i$ in Theorem~\ref{thm:main}.
In this case, we do not expect to numerically converge to a single probability measure.
Instead, we expect that the algorithm will locate a ``limit cycle'' of probability measures where, after a sufficient number of epochs, each additional epoch will result in iterations through the same probability measures that satisfy the specific push-forward constraint for an iteration.
In such a case, the algorithm should terminate due to the relative KL divergence stopping criteria where marginal QoI densities obtained at the end of an epoch are not sufficiently updated from those obtained at the conclusion of the prior epoch.
While outside the scope of this work, a future work will focus on the numerical analysis and convergence of the computational algorithm. 
In this work, we limit the presentation to simply pointing out the presence of numerical errors and their impact on results to keep the focus on the application of the iterative approach. 

\subsubsection{Iterative Results}

\begin{figure}
    \centering
    \includegraphics[width=0.7\linewidth]{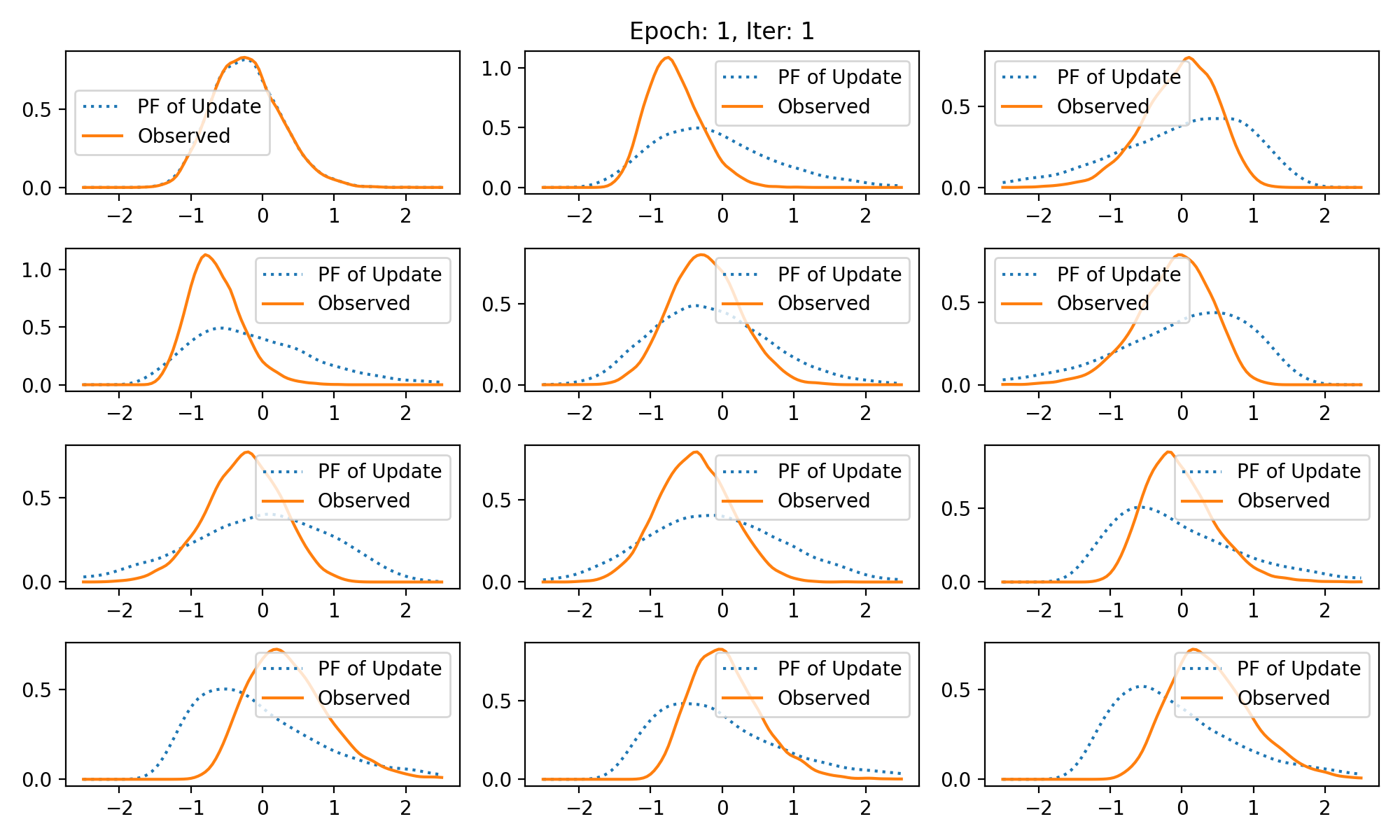}
    \caption{KDE estimates of the marginal QoI densities for $Q_1$ (top left) through $Q_{12}$ (bottom right) ordered from left-to-right in each row starting at the top. 
    The marginals associated with the push-forward measures of the updated distribution for the first iteration of the first epoch are shown as blue dotted curves and for the observed distribution are shown as orange solid curves.
    Except for the two $Q_1$ marginals (top left), the two marginals associated with each of the other QoI all have significant disagreement since none of the associated push-forward constraints have been utilized.}
    \label{fig:ex2_pfs_epoch_1_iter_1}
\end{figure}

\begin{figure}
    \centering
    \includegraphics[width=0.7\linewidth]{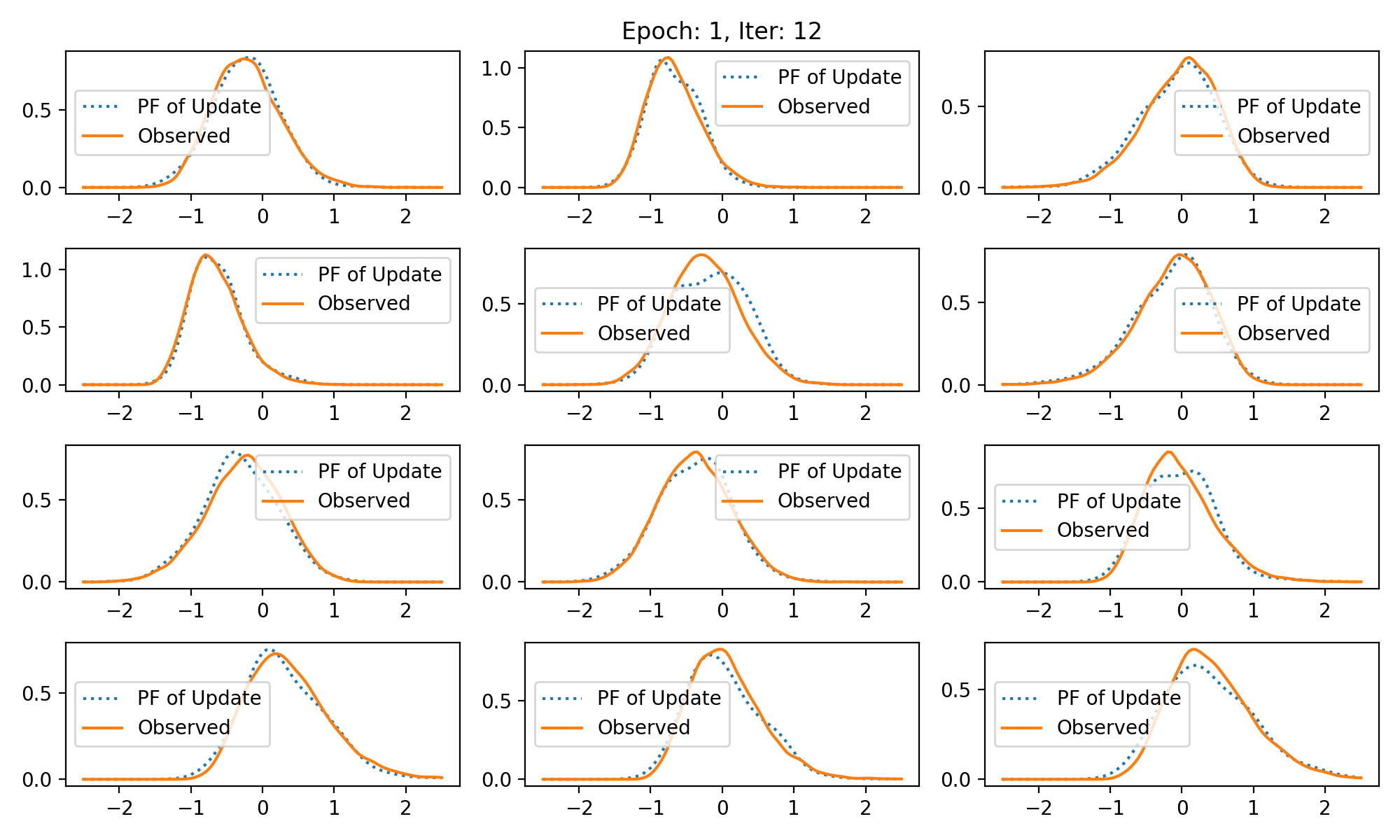}
    \caption{KDE estimates of the marginal QoI densities for $Q_1$ (top left) through $Q_{12}$ (bottom right) ordered from left-to-right in each row starting at the top. 
    The marginals associated with the push-forward measures of the updated distribution for the 12th (and final) iteration of the first epoch are shown as blue dotted curves and for the observed distribution are shown as orange solid curves.
    All push-forward constraints have now been utilized resulting in the two marginals for each of the QoI appearing similar.
    }
    \label{fig:ex2_pfs_epoch_1_iter_12}
\end{figure}

Figures~\ref{fig:ex2_pfs_epoch_1_iter_1} and~\ref{fig:ex2_pfs_epoch_1_iter_12}
summarize the QoI marginals associated with the first and last iterations, respectively, of the first epoch.
In the plots of these figures, the blue dotted curves are the marginals associated with the push-forward of the updated distributions while the orange curves are the marginals associated with the observed distribution.
In Figure~\ref{fig:ex2_pfs_epoch_1_iter_1}, the two marginals associated with $Q_1$ (top left plot) lay almost perfectly on top of each other while significant visual discrepancies are evident between the two marginal densities associated with all of the remaining QoI.
This is expected since the only push-forward constraint utilized at this point is the one associated with $Q_1$. 
%In other words, updating the initial density using only this first component of the QoI map (which corresponds to the pressure measured at approximately $(0.815, 0.631)\in\Omega$) fails to cause the other marginal QoI densities defined by the push-forward of the update and the associated QoI component map. 
However, once all of the push-forward constraints are utilized by iteration \#12, the two marginals for all of the QoI appear qualitatively similar as seen in Figure~\ref{fig:ex2_pfs_epoch_1_iter_12}.
Despite the qualitative similarities, there are some clear numerical discrepancies between several of these marginal densities. 
For instance, the marginals associated with $Q_5$ (shown in the middle plot of the second row) and with $Q_9$ (shown in the right plot of the third row) have the most visually evident discrepancies.
Further epochs are clearly required before numerical convergence, as measured by the absolute KL tolerance of $\expnumber{1}{-6}$, is reached.

\begin{figure}
    \centering
    \includegraphics[width=0.32\linewidth]{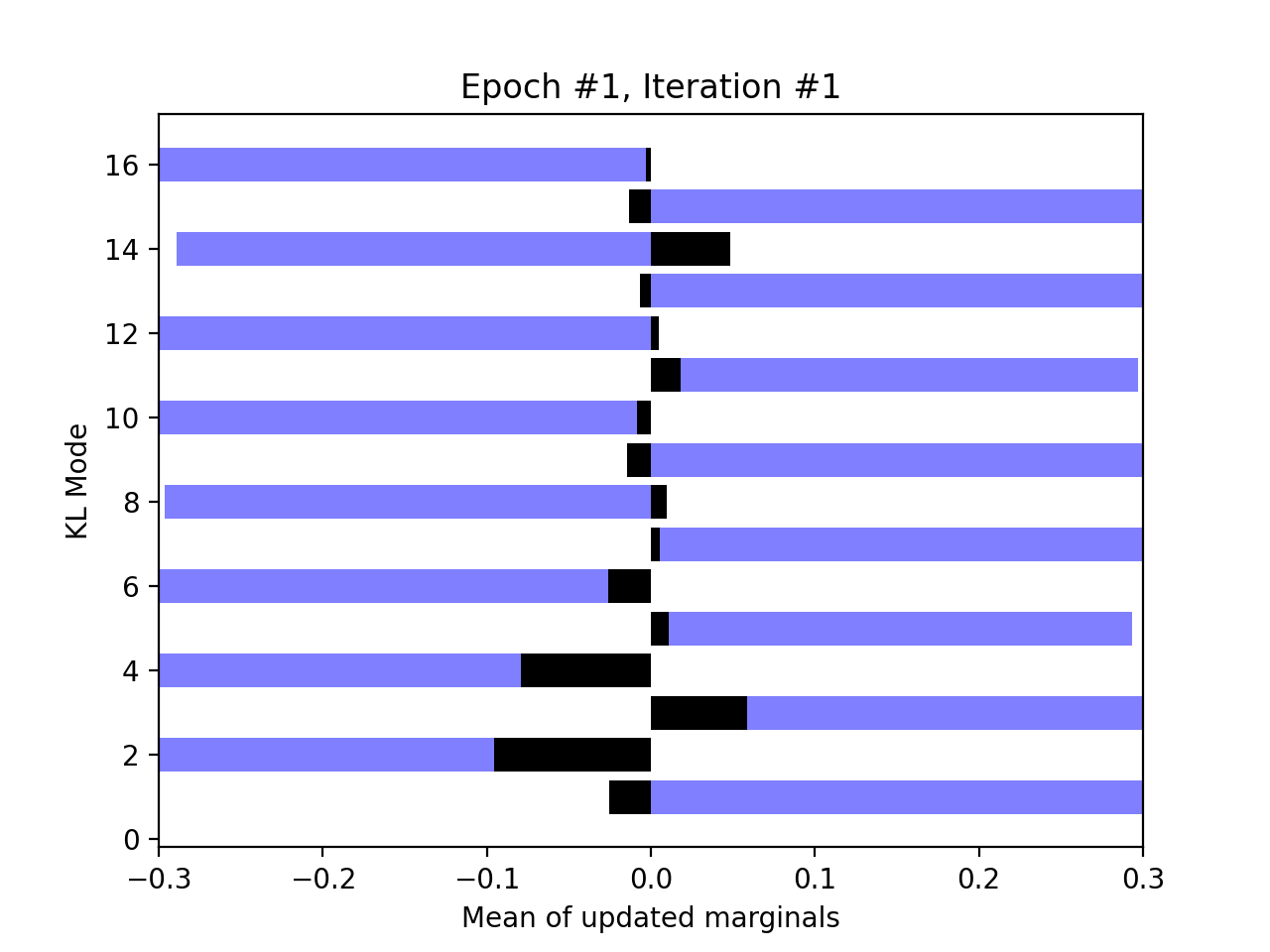}
    \includegraphics[width=0.32\linewidth]{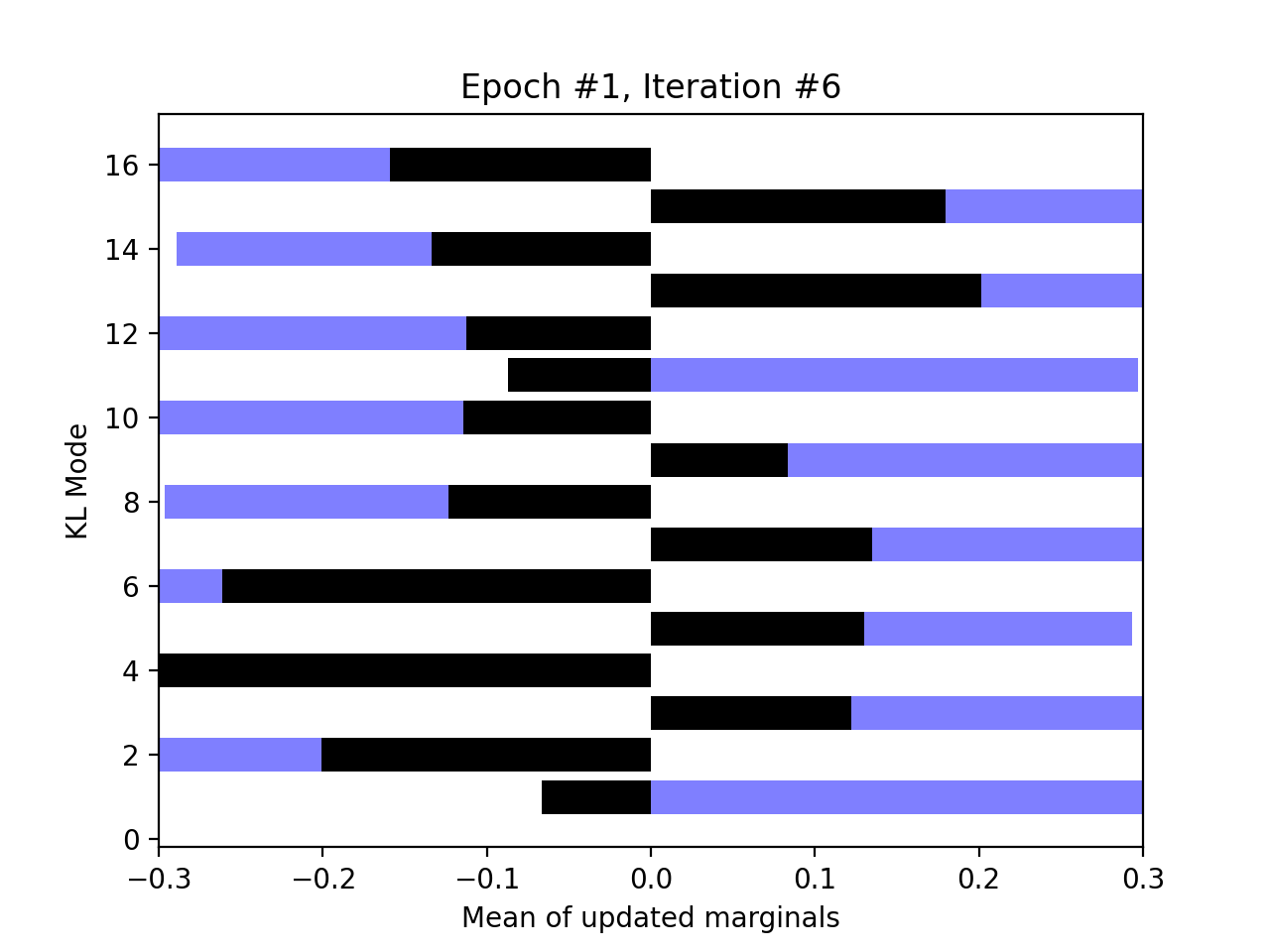}
    \includegraphics[width=0.32\linewidth]{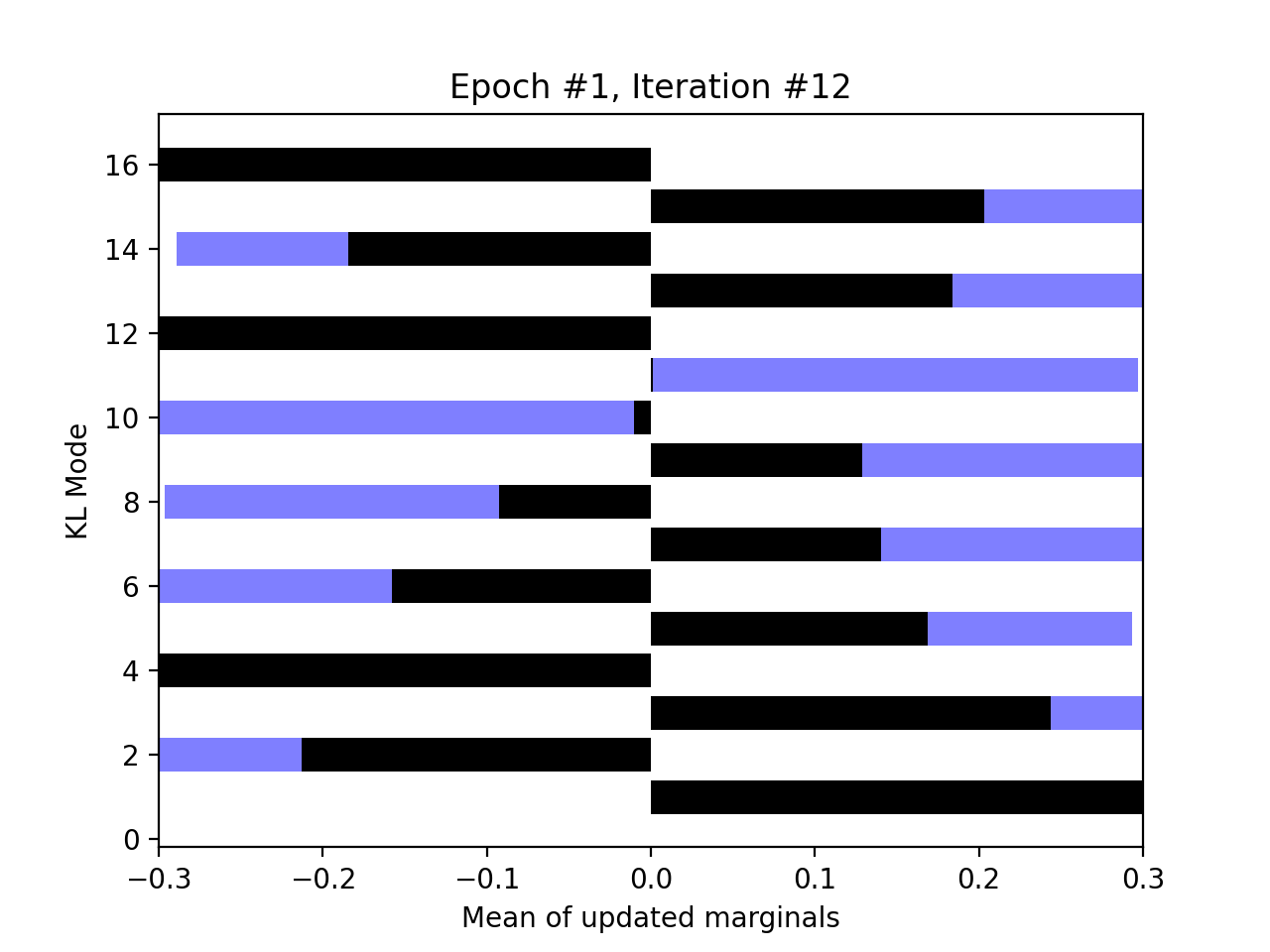}
    \caption{Sample means of the first 16 KL modes for the data-generating distribution (blue bars) and updated distributions (black bars). These plots show the variation in these results across the first epoch with the first iteration (left plot), sixth iteration (middle plot), and 12th and final iteration (right plot).}
    \label{fig:ex2_means_16KL_epoch_1_iters_1_6_and_12}
\end{figure}

Figure~\ref{fig:ex2_means_16KL_epoch_1_iters_1_6_and_12} shows three plots that provide insight into the behavior of the iterative updates on the parameter space obtained throughout the first epoch.
In each of these plots, we show the mean of the updated marginals for the first 16 (out of 100) KL modes.
We focus the plots on just those KL modes because those are the ones for which the sample means of the data-generating distribution were chosen to deviate away from zero, which allows some fixed basis for comparison. 
As previously mentioned, the data-generating means were chosen to alternate between $\pm 0.3$ for the first 16 KL modes, and the solid blue bars in each plot show the sample mean values, which approximate these theoretical values.
The black bars in each plot show the means of the KL modes associated with the updated distribution obtained at iteration \#1 (left plot), \#6 (middle plot), and \#12 (right plot). 
Recall that the initial means are all zero, and the first iteration (left plot) results in an updated distribution that produces means for only a few of these KL modes that appear to deviate strongly from zero and in the directions of the data-generating means (e.g., KL modes 2, 3, and 4) while most of the others either do not appear to deviate strongly away from zero or have values that appear to deviate in a direction opposite to that of the data-generating mean (e.g., KL mode \#14). 
By iteration \#6 (middle plot), there are clear deviations away from zero for more of the updated means of KL modes.
Furthermore, the emerging pattern mostly appears to mimic the alternating structure seen in the data-generating distribution with only a few exceptions (e.g., KL modes \#1 and \#11). 
At iteration \#12 (right plot), which is the final iteration of the epoch, the alternating pattern appears to be mostly present with only KL modes \#10 and \#11 not appearing to be strongly biased in the directions of the data-generating means. 

Before we show the results where numerical convergence was achieved, a few remarks are in order.
First, there are significant variations observed in the iterative updates throughout this epoch as evidenced by the varying mean values of the first 16 KL modes shown in Figure~\ref{fig:ex2_means_16KL_epoch_1_iters_1_6_and_12}.
Second, we do not expect to recover the data-generating distribution here since we (1) are not using a QoI map that defines a bijection, (2) are iterating through components of the QoI map, and (3) related to both (1) and (2), we expect an infinite number of probability distributions on the 100-dimensional parameter space that satisfy the 12 push-forward constraints. 
However, we found that most QoI maps chosen that varied from as few as eight to as many as 20 components resulted in final updated distributions that produced means of these KL modes that mostly matched the alternating pattern of the data-generating means for these first 16 KL modes. 
We expect this is due to the PDE solution being most sensitive to perturbations in the lower KL modes than the higher KL modes, which results in most of the potential measurements exhibiting similar sensitivities. 
While outside the scope of this work, a future work will examine systematic greedy approaches to optimal experimental design (OED) for constructing QoI maps, component-by-component, for the iterative solution of high-dimensional DCI problems.

\begin{figure}
    \centering
    \includegraphics[width=0.7\linewidth]{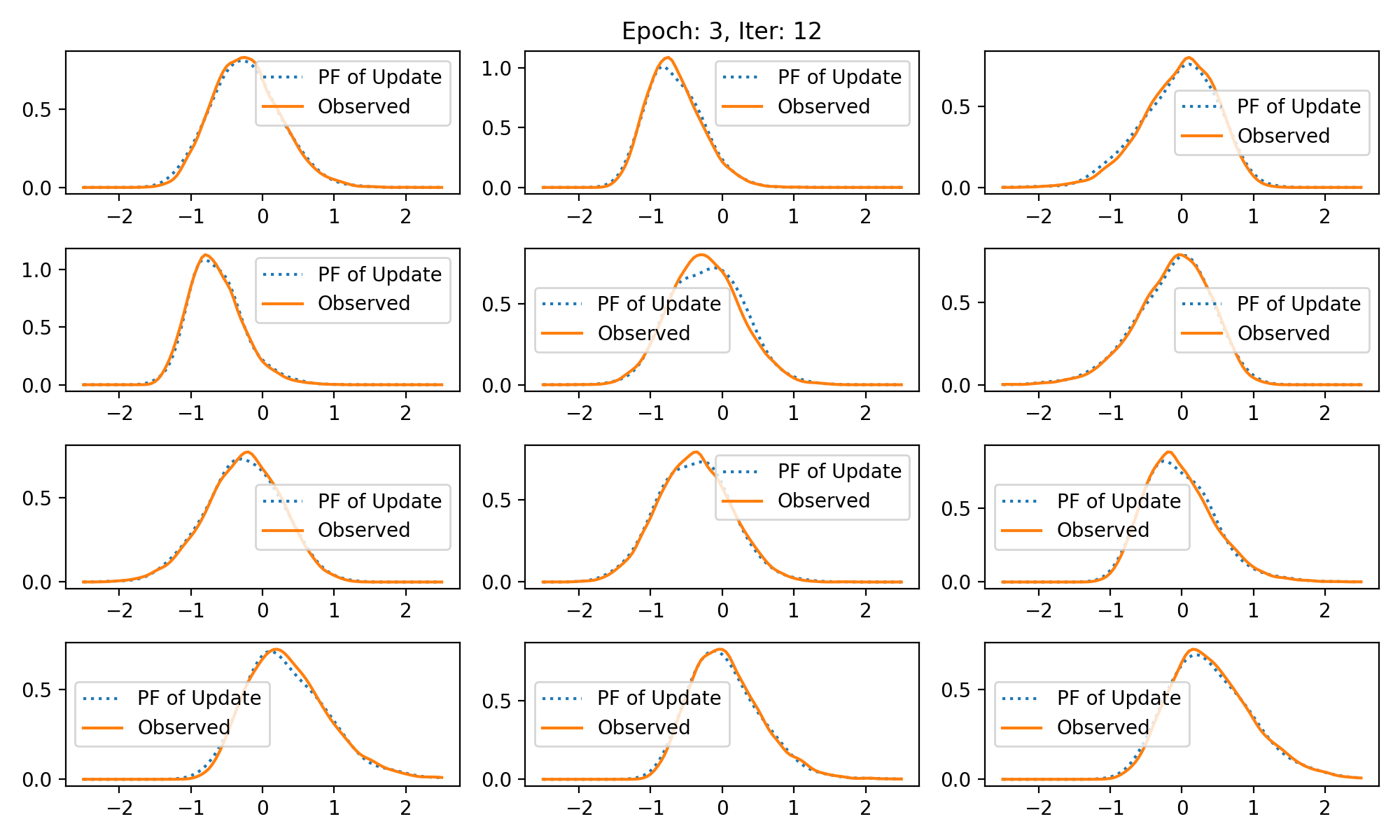}
    \caption{KDE estimates of the marginal QoI densities for $Q_1$ (top left) through $Q_{12}$ (bottom right) ordered from left-to-right in each row starting at the top. 
    The marginals associated with the push-forward measures of the updated distribution for the 12th (and final) iteration of the third (and final) epoch are shown as blue dotted curves and for the observed distribution are shown as orange solid curves.
    All pairs of QoI marginals are within the relaxed KL divergence tolerance of $\expnumber{1}{-6}$.}
    \label{fig:ex2_pfs_epoch_3_iter_12}
\end{figure}

The algorithm terminated at the third epoch due to the absolute KL divergence stopping criteria.
In other words, the KL divergence of every QoI marginal associated with the push-forward of the updated distribution at the end of this epoch was within $\expnumber{1}{-6}$ of the associated observed marginal. 
This is visually confirmed by the array of QoI marginal plots shown in Figure~\ref{fig:ex2_pfs_epoch_3_iter_12} that is arranged in the same order as Figures~\ref{fig:ex2_pfs_epoch_1_iter_1} and~\ref{fig:ex2_pfs_epoch_1_iter_12}.

\begin{figure}
    \centering
    \includegraphics[width=0.49\linewidth]{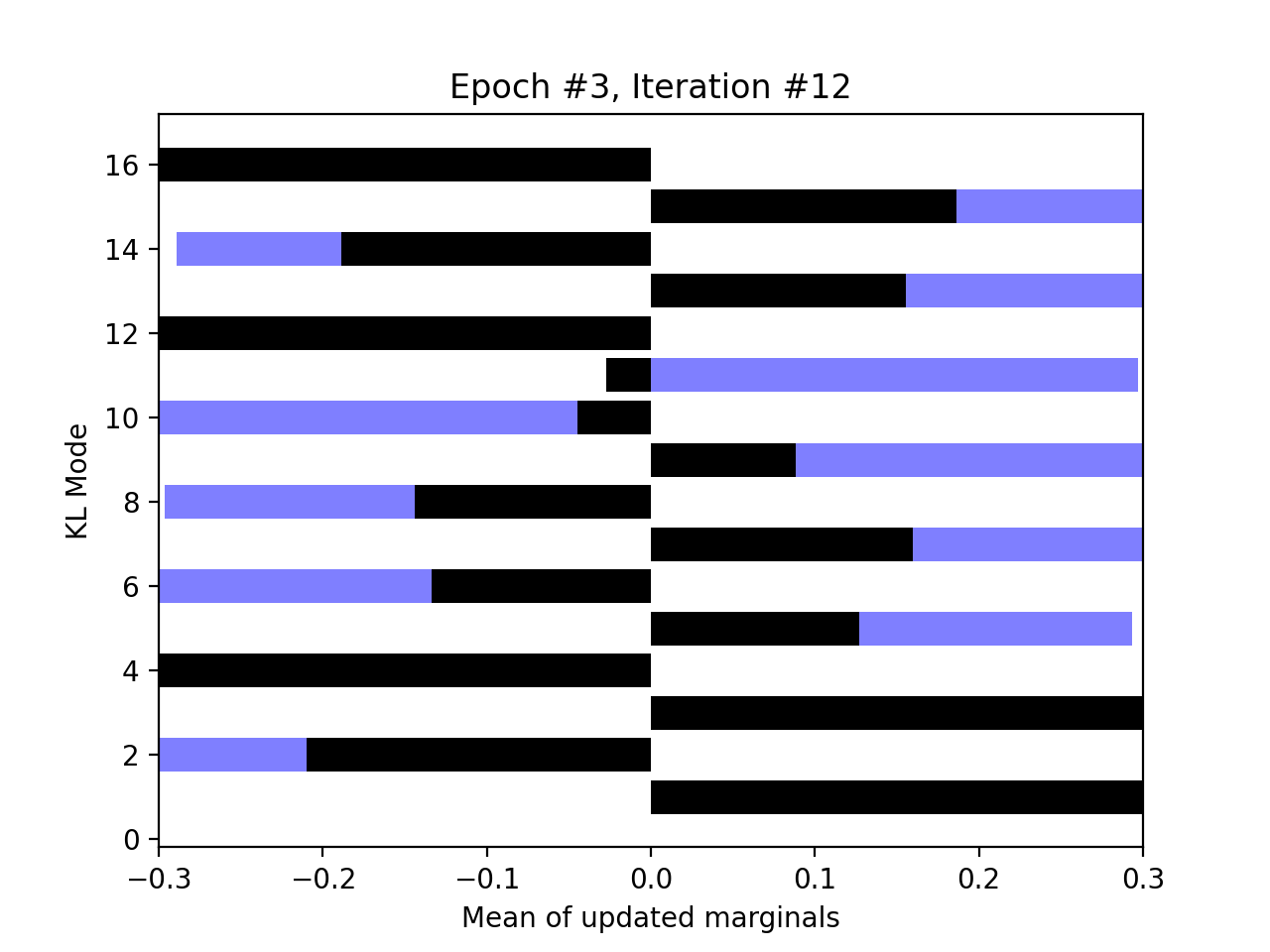}
    \includegraphics[width=0.49\linewidth]{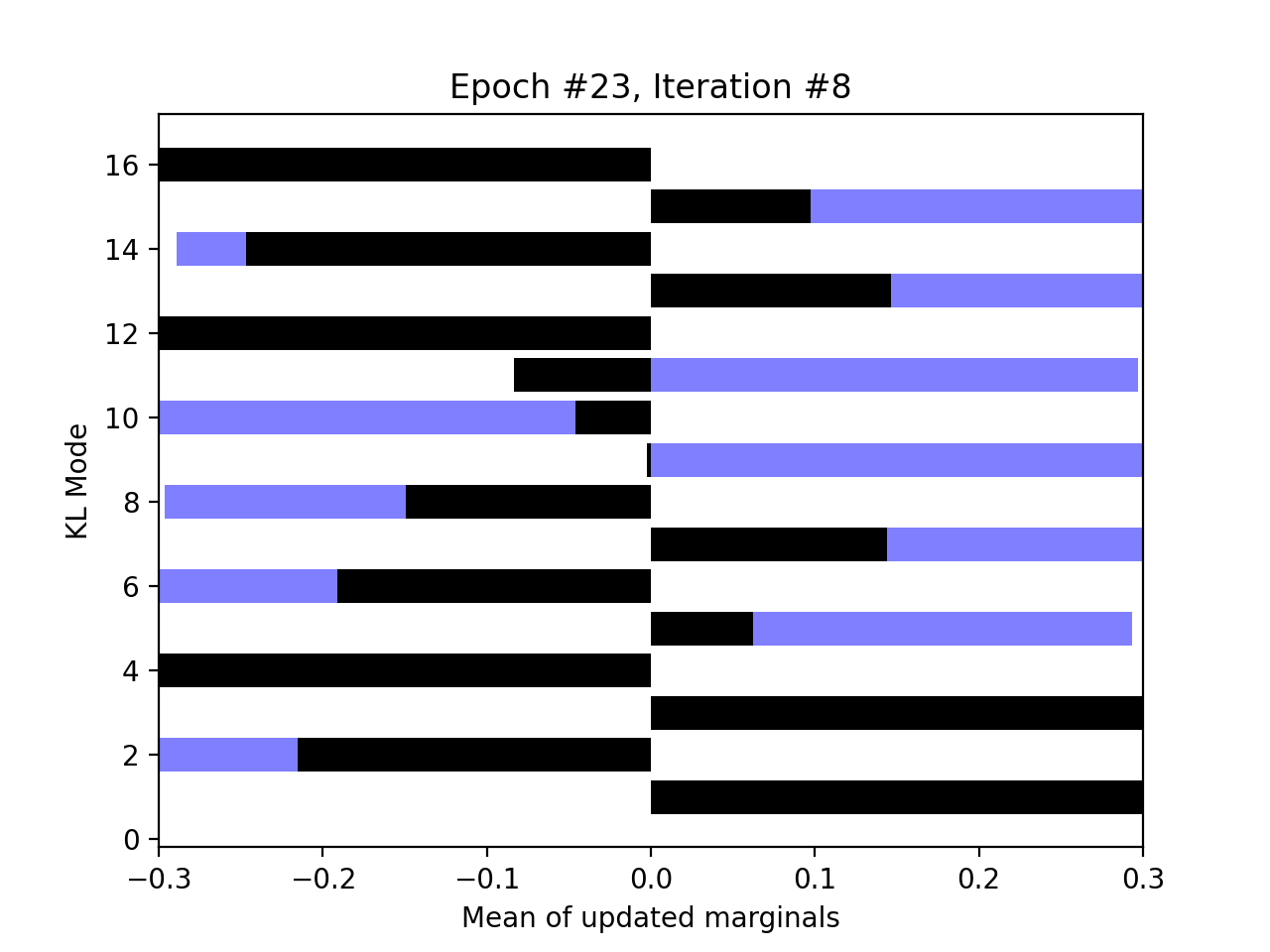}
    \caption{Sample means of the first 16 KL modes for the data-generating distribution (blue bars) and updated distributions (black bars). These plots show the variation in final epoch results when iterating across all 12 QoI (left plot) or across eight QoI subspaces (right plot) defined by four two-dimensional (pressure and velocity) and four one-dimensional (pressure) measurements as illustrated in Figure~\ref{fig:ex2_sensors}.}
    \label{fig:ex2_means_16KL_comparison}
\end{figure}

In the left plot of Figure~\ref{fig:ex2_means_16KL_comparison}, we show the means of the first 16 KL modes for the final updated distribution achieved at the final iteration of epoch \#3 versus the sample means of the data-generating distribution. 
As with the final iteration of the first epoch, the general alternating pattern is again seen with a clear exception for KL mode \#11. 
We again emphasize that the updated distribution is defined on a 100-dimensional space that is not guaranteed to reconstruct the data-generating distribution as previously mentioned, but, as seen in Figure~\ref{fig:ex2_pfs_epoch_3_iter_12}, the 12 push-forward constraints are simultaneously satisfied. 
Unlike the iterated updated distributions of the first epoch, the variation observed across the iterations of this final epoch was minimal, so those results are omitted in the interest of space.
However, the interested reader may use the supplementary material to explore this further and also vary the stopping criteria.

% we show two plots that provide insight into the means of the KL modes computed from the updates within this final epoch (left plot) as well as how the means of these updated KL modes vary as a function of iteration number where we note that there are $27\times 12 = 324$ total iterations.
% In the left plot, we observed only minor variations in the values obtained at each iteration, suggesting that each successive update is not significantly different from the prior ones in this epoch.
% This is further confirmed in the right plot where we observe most trend lines for the KL modes are relatively flat with relatively small oscillations.
% While all push-forward constraints are satisfied (within the specified tolerance), these small oscillations are indicative of the numerical approximation issues previously mentioned that can lead to the updated distributions iterating through a ``limit cycle'' of probability distributions. 
% The interested reader is encouraged to use the supplementary material to verify that reducing the absolute stopping criteria for the KL divergence from $\expnumber{1}{-7}$ to $\expnumber{1}{-10}$ will result in the relative stopping criteria terminating the algorithm at epoch 30 while producing results that are virtually indistinguishable from those seen here. 

\subsubsection{Iterative Result with Joint Pressure \& Velocity Data}

We conclude this numerical example by demonstrating the flexibility of the algorithm where we now assume that the four sensors that produce both pressure and velocity data do so simultaneously so that the joint data at these locations can be utilized in the iterations.
In other words, each epoch now contains eight iterations where four of the iterations involve the use of a two-dimensional joint QoI space. 

\begin{figure}
    \centering
    \includegraphics[width=0.49\linewidth]{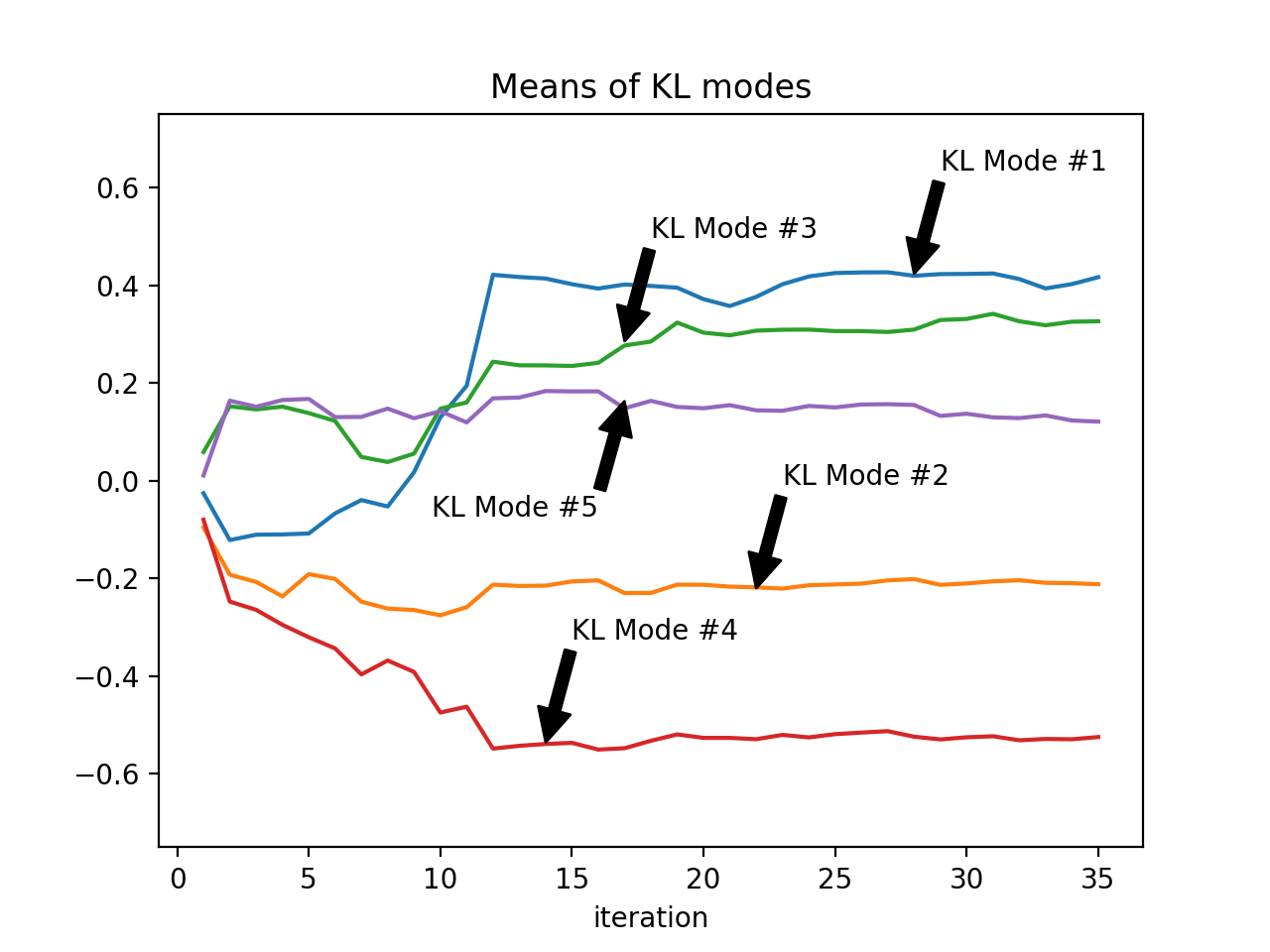}
    \includegraphics[width=0.49\linewidth]{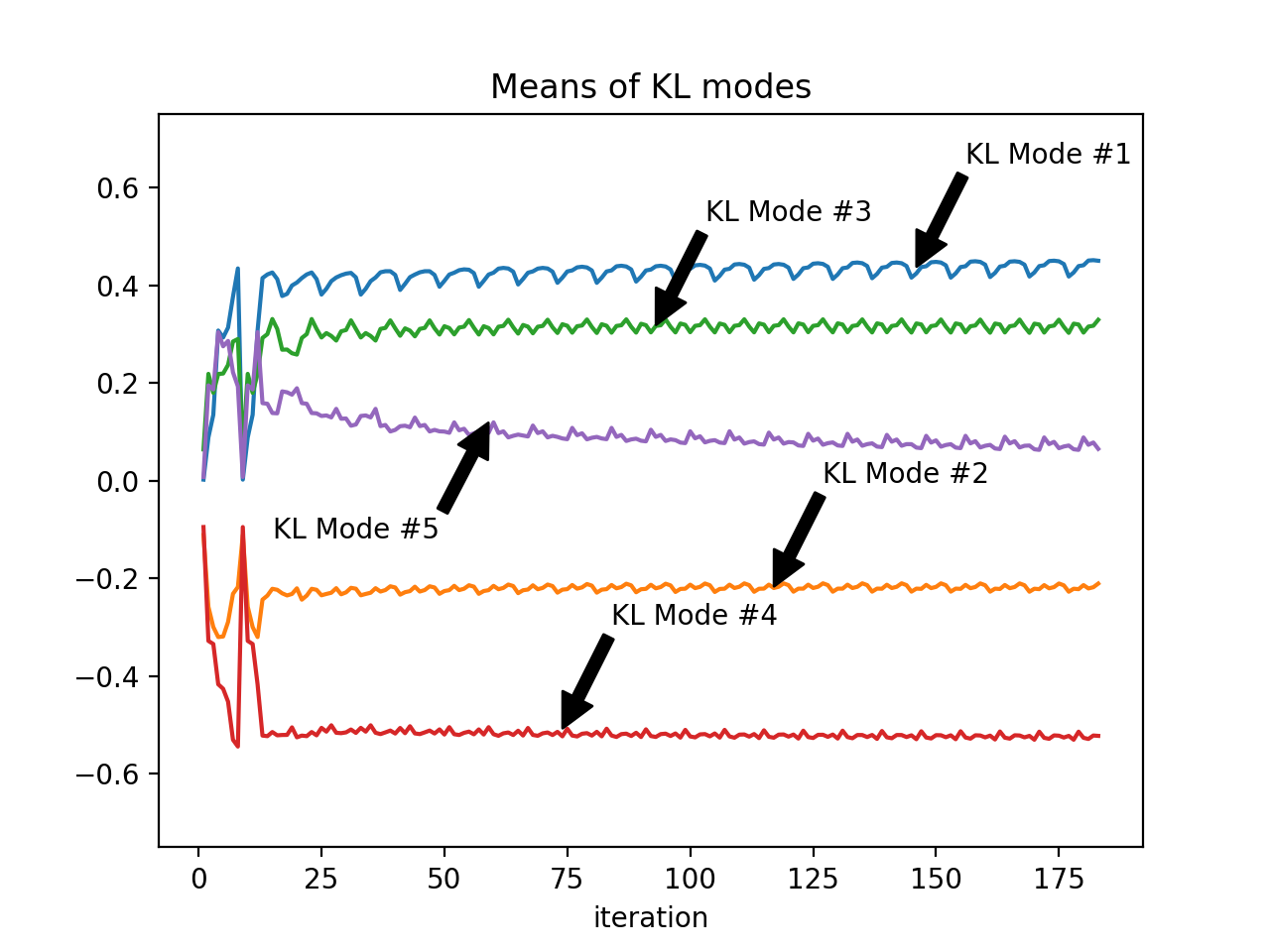}
    \caption{The trends of the updated means of the first five KL modes as a function of iteration when iterating across all 12 QoI (left plot) or across eight QoI subspaces (right plot) defined by four two-dimensional (pressure and velocity) and four one-dimensional (pressure) measurements as illustrated in Figure~\ref{fig:ex2_sensors}.}
    \label{fig:ex2_means_5K_iteration_comparison}
\end{figure}

The algorithm now terminates at epoch \#23 due to the relative KL divergence stopping criteria.
In other words, the QoI marginals associated with the push-forward of the updated distribution at the end of epoch \#23 is not significantly different than that obtained at the end of epoch \#22.
The right plot of Figure~\ref{fig:ex2_means_16KL_comparison} summarizes the final updated means of the first 16 KL modes compared to the sample means of the data-generating distribution.
Comparing these to those in the left plot of this same figure, we see many similarities with a few noticeable differences in the 6 of the 16 means of the KL modes \#5, \#6, \#9, \#11, \#14, and \#15. 
This illustrates the impact of incorporating joint structure in some of the QoI data not present in the prior GSIP. 

We further compare the results of these two GSIPs in Figure~\ref{fig:ex2_means_5K_iteration_comparison} where the updated means of the first five KL modes are shown as a function of iteration.
The left plot shows the results for the prior GSIP where there are a total of $3\times 12 = 36$ iterations and the right plot shows the results where there are eight QoI subspaces (four two-dimensional and four one-dimensional) and a total of $23\times 8=184$ iterations.
We first observe that the right plot shows there are more significant transitory variations in the means of the KL modes in the initial iterations of the current GSIP compared to the prior GSIP. 
After about two epochs, both GSIPs produce updated distributions with relatively stable means of KL modes. 
The left plot specifically shows minor variation in the updated means after iteration 24 (two epochs) whereas the right plot shows curves that seem to converge to a relatively horizontal but oscillatory signal indicating a ``limit cycle'' of the updated distributions is occurring within the epochs. 
This limit cycle behavior is in fact expected due to the fact that the algorithm terminated due to the the relative KL criteria. 
The interested reader can make the oscillations larger by reducing the number of samples used to approximate the densities, which increases the error in the density approximations and generally makes it more difficult for all push-forward constraints to be simultaneously satisfied. 

% \begin{figure}
%     \centering
%     \includegraphics[width=0.49\linewidth]{figures/ex2_means_16KL_modes_last_epoch_joint_pressure_velocity.png}
%     \includegraphics[width=0.49\linewidth]{figures/ex2_means_16KL_modes_vs_iteration_joint_pressure_velocity.png}
%     \caption{These results are for eight QoI spaces defining the 12 total QoI where pressure and velocity data spaces are jointly inverted for each sensor location that provides both types of data as shown in Figure~\ref{fig:ex2_sensors}.
%     (Left) A zoomed in look at the first 16 KL modes for which the means of the data-generating distribution have alternating  values (sample means shown in solid blue). The final iteration of the final epoch is black while the earlier iterations are in pinkish/red colors that are also more transparent. Note the every iteration in this final epoch exhibits the same alternating pattern for the mean of each KL mode that is present in the data-generating distribution.
%     (Right) The means of the first 16 KL modes as a function of the $17\times 12 = 204$ iterations, with final values shown in the plot to the left.}
%     \label{fig:ex2_joint_pressure_velocity}
% \end{figure}

\section{Conclusions and Future Work}\label{sec:conclusions}

The challenge of solving the GSIP, where a parameter measure must satisfy multiple push-forward constraints simultaneously, served as the primary motivation for this work. 
We introduced an iterative DCI framework designed to solve the GSIP through a sequence of local optimization steps. 
This approach leveraged the rigorous measure-theoretic foundations of DCI by systematically allowing for the sequential incorporation of information from disparate data sources associated with various subsets of the QoI.

The theoretical analysis established that the fundamental DCI solution for the standard SIP is an optimal information updated distribution that minimizes the $f$-divergence over the space of all measure-consistent pullback solutions. 
This crucial result enabled the construction of a convergent iterative algorithm for the GSIP, which we proved converges in total variation to a unique information projection of the initial distribution onto the intersection of the solution sets. 
Numerical results across low-dimensional linear systems and high-dimensional PDE-based examples demonstrated the convergence and practical robustness of the iterative DCI method, particularly highlighting its advantage in scenarios where the explicit approximation of high-dimensional joint observational densities is computationally infeasible or not possible due to asynchronous observations of QoI data.

The iterative DCI methodology provides a powerful, modular tool for updating uncertainty quantifications as new datasets become available. 
Future work will focus on analyzing the following topics rooted in this approach:

\begin{itemize}
    \item \textbf{Optimization of Initial Distribution:} The final converged solution ($P^\infty$) is dependent upon the choice of the initial measure ($P^0 = P_{\text{init}}$). 
    Future work will explore strategies for optimizing the choice of $P^0$ to target specific features of the data-generating distribution. 
    This includes analyzing the convergence properties when using a sequence of varying initial distributions that are mutually absolutely continuous after the first epoch.
    This also includes analyzing what conditions on $\initmeas$ are necessary to guarantee $\initmeas\ll\updatemeas$ to conclude optimality of the backward minimization problem. 
    \item \textbf{Numerical Analysis and Computational Efficiency:} While we demonstrated numerical convergence, a dedicated focus on the quantitative convergence analysis of the computational algorithm is needed, particularly addressing the effect of finite sampling errors and approximation errors in density estimation that lead to potential ``limit cycle'' behavior in the iterative sequence.
    \item \textbf{Integration with Learned QoI and Optimal Experimental Design (OED):} The iterative framework is well-suited for dynamically incorporating QoI maps learned from high-dimensional or noisy datasets, such as those derived through machine learning methods (e.g., kPCA in the LUQ framework \cite{MSB+22, RHB25}). 
    Future research will integrate the iterative DCI solution with OED criteria designed for DCI to construct optimal sequential experimental design strategies that systematically select the most informative next constraint ($\phi_i$) to incorporate into the GSIP. 
\end{itemize}

%%%%%%%%%%%%%%%%%%%%%%%%%%%%%%%%%%%%%%%%%%%%%%%%%%%
\section{Data Availability and Supplementary Material}\label{sec:supplementary}
%%%%%%%%%%%%%%%%%%%%%%%%%%%%%%%%%%%%%%%%%%%%%%%%%%%
%{\bf To-Do: Create a github repo and a zenodo link to the notebooks and data sources utilized in this work.}

The public repo \verb|https://github.com/CU-Denver-UQ/Iterative-DCI| contains the code utilized to generate the results for the two numerical examples.
Executing either the Jupyter notebook or the Python script associated with the first example will both generate all the data as well as the figures shown in Section~\ref{sec:example1}. 
The data for the second example shown in Section~\ref{sec:example2} must be generated prior to executing either the associated Jupyter notebook or script. 
To generate this data, install MrHyDE~\cite{mrhyde_user,mrhyde2023github} and use the yaml files as well as the dat file provided in the GitHub repo.

%%%%%%%%%%%%%%%%%%%%%%%%%%%%%%%%%%%%%%%%%%%%%%%%%%%
\section{Acknowledgments}\label{sec:acknowledgments}
%%%%%%%%%%%%%%%%%%%%%%%%%%%%%%%%%%%%%%%%%%%%%%%%%%%
T.~Butler's work is supported by the National Science Foundation under Grant No.~DMS-2208460.
T.~Butler's work was also supported by NSF IR/D program while working at National Science Foundation. 
However, any opinion, finding, and conclusions or recommendations expressed in this material are those of the author and do not necessarily reflect the views of the National Science Foundation.

This paper describes objective technical results and analysis. Any subjective views or opinions that might be expressed in the paper do not necessarily represent the views of the U.S. Department of Energy or the United States Government.
Sandia National Laboratories is a multimission laboratory managed and operated by National Technology and
Engineering Solutions of Sandia, LLC., a wholly owned subsidiary of Honeywell International, Inc.,
for the U.S. Department of Energy’s National Nuclear Security Administration under contract DE-NA-0003525.  This material is based upon work supported by the U.S. Department of Energy, Office of Science, Office of Advanced Scientific Computing Research, under contract 25-025287.

\bibliographystyle{IJ4UQ_Bibliography_Style}
\bibliography{references} 

@article{BSW+25,
	author  = {T. Butler and R. Spence and T. Wildey and T.Y. Yen},
	title   = {Stability and Convergence of Solutions to Stochastic Inverse Problems Using Approximate Probability Densities},
    journal = {International Journal for Uncertainty Quantification},
	issn    = {2152-5080},
	year    = {2025},
	volume  = {15},
	number  = {4},
	pages   = {21--51}
}

@article{pyMC3,
author={Salvatier, J. and Wiecki, T.V. and Fonnesbeck, C.},
year={2016},
title={{Probabilistic programming in Python using PyMC3}},
journal={PeerJ Computer Science},
volume={2},
pages={e55},
isbn={2376-5992}
}

@Article{Wikle1998,
author="Wikle, C.K.
and Berliner, L.M.
and Cressie, N.",
title={{Hierarchical {B}ayesian space-time models}},
journal="Environmental and Ecological Statistics",
year="1998",
month="Jun",
day="01",
volume="5",
number="2",
pages="117--154",
issn="1573-3009",
doi="10.1023/A:1009662704779"
}

@article{ganis2008stochastic,
title={Stochastic collocation and mixed finite elements for flow in porous media},
author={Ganis, B. and Klie, H. and Wheeler, M.F. and Wildey, T. and Yotov, I. and Zhang, D.},
journal={Computer methods in applied mechanics and engineering},
volume={197},
number={43},
pages={3547--3559},
year={2008},
publisher={Elsevier}
}

@article{wheeler2011multiscale,
title={A multiscale preconditioner for stochastic mortar mixed finite elements},
author={Wheeler, M.F. and Wildey, T. and Yotov, I.},
journal={Computer Methods in Applied Mechanics and Engineering},
volume={200},
number={9},
pages={1251--1262},
year={2011},
publisher={Elsevier}
}

@article{zhang2004efficient,
title={An efficient, high-order perturbation approach for flow in random porous media via {K}arhunen-{L}o\'eve and polynomial expansions},
author={Zhang, D. and Lu, Z.},
journal={Journal of Computational Physics},
volume={194},
number={2},
pages={773--794},
year={2004},
publisher={Elsevier}
}

@article{NC16,
title = {Evading the curse of dimensionality in nonparametric density estimation with simplified vine copulas},
journal = {Journal of Multivariate Analysis},
volume = {151},
pages = {69--89},
year = {2016},
issn = {0047-259X},
doi = {https://doi.org/10.1016/j.jmva.2016.07.003},
author = {T.~Nagler and C.~Czado}
}

@article{TS92,
 ISSN = {00905364},
 author = {G.R.~Terrell and D.W.~Scott},
 journal = {The Annals of Statistics},
 number = {3},
 pages = {1236--1265},
 publisher = {Institute of Mathematical Statistics},
 title = {Variable Kernel Density Estimation},
 volume = {20},
 year = {1992},
 doi = {10.1214/aos/1176348768},
}

@article{AH21,
  author  = {Agrawal, R. and Horel, T.},
  title   = {Optimal bounds between $f$-divergences and integral probability metrics},
  journal = {J. Mach. Learn. Res.},
  volume  = {22},
  number  = {1},
  pages   = {1--59},
  year    = {2021}
}

@ARTICLE{Guntuboyina2011,
  author={Guntuboyina, A.},
  journal={IEEE Transactions on Information Theory}, 
  title={Lower Bounds for the Minimax Risk Using $f$-Divergences, and Applications}, 
  year={2011},
  volume={57},
  number={4},
  pages={2386-2399},
  doi={10.1109/TIT.2011.2110791}
}

@ARTICLE{BKM17,
  title     = "Variational Inference: A Review for Statisticians",
  author    = "Blei, D.~M. and Kucukelbir, A. and McAuliffe, J.D.",
  journal = "Journal of the American Statistical Association",
  publisher = "Taylor \& Francis",
  volume    =  112,
  number    =  518,
  pages     = "859--877",
  month     =  apr,
  year      =  2017
}

@article{BWY20,
  title={Data-consistent inversion for stochastic input-to-output maps},
  author={Butler, T. and Wildey, T. and Yen, T.Y.},
  journal={Inverse Problems},
  volume={36},
  number={8},
  pages={085015},
  year={2020},
  publisher={IOP Publishing}
}

@article{BH20,
  title = {What Do We Hear from a Drum? {{A}} Data-Consistent Approach to Quantifying Irreducible Uncertainty on Model Inputs by Extracting Information from Correlated Model Output Data},
  shorttitle = {What Do We Hear from a Drum?},
  author = {Butler, T. and Hakula, H.},
  year = {2020},
  month = oct,
  journal = {Computer Methods in Applied Mechanics and Engineering},
  volume = {370},
  pages = {113228},
  issn = {0045-7825},
  doi = {10.1016/j.cma.2020.113228},
  langid = {english},
}

@ARTICLE{FBB25,
  title     = "Calibration of parameter perturbations for ensemble prediction using data-consistent inversion",
  author    = "Fleury, A. and Bouttier, F. and Bergot, T.",
  journal   = "Monthly Weather Review",
  publisher = "American Meteorological Society",
  volume    =  "153",
  number    =  "4",
  pages     = "655--672",
  month     =  "April",
  year      =  2025
}

@article{BGW2020,
	title = {Data-{Consistent} {Solutions} to {Stochastic} {Inverse} {Problems} {Using} a {Probabilistic} {Multi}-{Fidelity} {Method} {Based} on {Conditional} {Densities}},
	volume = {10},
	issn = {2152-5080, 2152-5099},
	doi = {10.1615/Int.J.UncertaintyQuantification.2020030092},
	language = {English},
	number = {5},
	urldate = {2023-11-21},
	journal = {International Journal for Uncertainty Quantification},
	author = {Bruder, L. and Gee, M.W. and Wildey, T.},
	year = {2020}
}

@article{tran2021solving,
  title={Solving stochastic inverse problems for property--structure linkages using data-consistent inversion and machine learning},
  author={Tran, A. and Wildey, T.},
  journal={JOM},
  volume={73},
  number={1},
  pages={72--89},
  year={2021},
  publisher={Springer}
}

@article{RPK+23,
author = {Rumbell, T.  and Parikh, J.  and Kozloski, J.  and Gurev, V.},
title = {Novel and flexible parameter estimation methods for data-consistent inversion in mechanistic modelling},
journal = {Royal Society Open Science},
volume = {10},
number = {11},
pages = {230668},
year = {2023},
doi = {10.1098/rsos.230668}
}

@INPROCEEDINGS{ZM2023,
  author={Zhang, Y. and Mikelsons, L.},
  booktitle={2023 IEEE/ASME International Conference on Advanced Intelligent Mechatronics (AIM)}, 
  title={{Solving Stochastic Inverse Problems with Stochastic BayesFlow}}, 
  year={2023},
  volume={},
  number={},
  pages={966-972},
  doi={10.1109/AIM46323.2023.10196190}
}

@article{PR2000,
author = {D. Poole and A.E. Raftery},
title = {{Inference for Deterministic Simulation Models: The {B}ayesian Melding Approach}},
journal = {Journal of the American Statistical Association},
volume = {95},
number = {452},
pages = {1244-1255},
year = {2000},
publisher = {Taylor & Francis},
doi = {10.1080/01621459.2000.10474324}
}

@article{PMS+14,
author = {Petra, N. and Martin, J. and Stadler, G. and Ghattas, O.},
title = {{A Computational Framework for Infinite-Dimensional {B}ayesian Inverse Problems, Part {II}: Stochastic Newton MCMC with Application to Ice Sheet Flow Inverse Problems}},
journal = {SIAM Journal on Scientific Computing},
volume = {36},
number = {4},
pages = {A1525-A1555},
year = {2014},
doi = {10.1137/130934805}
}

@article{Burger_2014,
	doi = {10.1088/0266-5611/30/11/114004},
	year = {2014},
	month = {Oct},
	publisher = {{IOP} Publishing},
	volume = {30},
	number = {11},
	pages = {114004},
	author = {M. Burger and F. Lucka},
	title = {{Maximum a posteriori estimates in linear inverse problems with log-concave priors are proper Bayes estimators}},
	journal = {Inverse Problems}
}

@article{BMP+1994,
author="Berger, J.O.
and Moreno, E.
and Pericchi, L.R.
and Bayarri, M.J.
and Bernardo, J.M.
and Cano, J.A.
and De la Horra, J.
and Mart{\'i}n, J.
and R{\'i}os-Ins{\'u}a, D.
and Betr{\`o}, B.
and Dasgupta, A.
and Gustafson, P.
and Wasserman, L.
and Kadane, J.B.
and Srinivasan, C.
and Lavine, M.
and O'Hagan, A.
and Polasek, W.
and Robert, C.P.
and Goutis, C.
and Ruggeri, F.
and Salinetti, G.
and Sivaganesan, S.",
title="An overview of robust {B}ayesian analysis",
journal="Test",
year="1994",
volume="3",
number="1",
pages="5--124",
issn="1863-8260",
doi="10.1007/BF02562676"}

@article {KO2001,
author = {Kennedy, Marc C. and O'Hagan, Anthony},
title = {{B}ayesian calibration of computer models},
journal = {Journal of the Royal Statistical Society: Series B (Statistical Methodology)},
volume = {63},
number = {3},
publisher = {Blackwell Publishers Ltd.},
issn = {1467-9868},
doi = {10.1111/1467-9868.00294},
pages = {425--464},
year = {2001}
}

@article{Fitzpatrick1991,
  author={B. Fitzpatrick},
  title={{B}ayesian analysis in inverse problems},
  journal={Inverse Problems},
  volume={7},
  number={5},
  pages={675},
    year={1991}
}

@article{CKS14,
  author={D. Calvetti and J. Kaipio and E. Somersalo},
  title={Inverse problems in the {B}ayesian framework},
  journal={Inverse Problems},
  volume={30},
  number={11},
  pages={110301},
  year={2014}
}

@book{Gelman2013,
	AUTHOR	= {A.~Gelman and J.~B.~Carlin and H.~S.~Stern and D.~B.~Dunson and A.~Vehtari and D.~B.~Rubin},
	TITLE		= {{{B}ayesian Data Analysis, Third Edition}},
	publisher = {Chapman and Hall/CRC},
	YEAR		= {2013}
}

@article{BBE24,
   author = "Bingham, D. and Butler, T. and Estep, D.",
   title = "Inverse Problems for Physics-Based Process Models", 
   journal= "Annual Review of Statistics and Its Application",
   year = "2024",
   volume = "11",
   pages = "461-482",
   publisher = "Annual Reviews",
   issn = "2326-831X"
}

@article{PCT+23,
title = {Parameter estimation with maximal updated densities},
journal = {Computer Methods in Applied Mechanics and Engineering},
volume = {407},
pages = {115906},
year = {2023},
issn = {0045-7825},
doi = {https://doi.org/10.1016/j.cma.2023.115906},
author = {M. Pilosov and C. del-Castillo-Negrete and T.Y. Yen and T. Butler and C. Dawson}
}

@article{RHB25,
title = {A machine-learning enabled framework for quantifying uncertainties in parameters of computational models},
journal = {Computers \& Mathematics with Applications},
volume = {182},
pages = {184-212},
year = {2025},
issn = {0898-1221},
doi = {https://doi.org/10.1016/j.camwa.2025.01.030},
author = {T. Roper and H. Hakula and T. Butler}
}

@article{MSB+22,
	author = {S.A. Mattis and K.R. Steffen and T. Butler and C.N. Dawson and D. Estep},
	doi = {https://doi.org/10.1016/j.cma.2021.114230},
	issn = {0045-7825},
	journal = {Computer Methods in Applied Mechanics and Engineering},
	pages = {114230},
	title = {Learning Quantities of Interest from dynamical systems for observation-consistent inversion},
	volume = {388},
	year = {2022}
	}

@article{BJW18b,
author = {Butler, T. and Jakeman, J. and Wildey, T.},
title = {Convergence of Probability Densities Using Approximate Models for Forward and Inverse Problems in Uncertainty Quantification},
journal = {SIAM Journal on Scientific Computing},
volume = {40},
number = {5},
pages = {A3523-A3548},
year = {2018},
doi = {10.1137/18M1181675},
eprint = {https://doi.org/10.1137/18M1181675}
}

@article{BJW18a,
author = {T. Butler and J. Jakeman and T. Wildey},
title = {Combining Push-Forward Measures and {B}ayes' Rule to Construct Consistent Solutions to Stochastic Inverse Problems},
journal = {SIAM Journal on Scientific Computing},
volume = {40},
number = {2},
pages = {A984-A1011},
year = {2018},
doi = {10.1137/16M1087229}
}

@ARTICLE{APS+16,
author = {A. Alexandrian and N. Petra and G. Stadler and O. Ghattas},
title = {{A Fast and Scalable Method for A-Optimal Design of Experiments for Infinite-dimensional {B}ayesian Nonlinear Inverse Problems}},
journal = {SIAM Journal on Scientific Computing},
volume = {38},
issue = {1},
pages = {A243--A272},
year = {2016},
doi = {10.1137/140992564},
}

@article{CDS10,
author = "S.L. Cotter and M. Dashti and A.M. Stuart",
title = {{Approximation of {B}ayesian Inverse Problems}},
journal = "SIAM Journal of Numerical Analysis",
volume = "48",
year = "2010",
pages = "322-345",
}

@article{BET+14,
	Author = {Butler, T. and Estep, D. and Tavener, S. and Dawson, C. and Westerink, J.J.},
	Journal = {SIAM Journal on Uncertainty Quantification},
	Pages = {1--27},
	Title = {{A Measure-Theoretic Computational Method For Inverse Sensitivity Problems III: Multiple Quantities of Interest}},
	Year = {2014}}

@article{KL1951,
  author = "S. Kullback and R.A. Leibler",
  title  = "On Information and Sufficiency",
  journal= "The Annals of Mathematical Statistics",
  volume = "22",
  year   = "1951",
  pages  = "79--86"
}

@article{RenyiKL2014,
  author = "T. van Erven and P. Harremoes",
  title  = {{Renyi Divergence and Kullback-Leibler Divergence}},
  journal= "IEEE Transactions on Information Theory",
  volume = "60",
  year   = "2014",
  pages  = "3797--3820"
}

@misc{mrhyde2023github,
      title = "{MrHyDE} library",
      author = {Wildey, T.},
      url = "https://github.com/sandialabs/MrHyDE",
      howpublished = {\url{https://github.com/sandialabs/MrHyDE}},
      note = {Accessed: 09/25/2024}
}

@misc{mrhyde_user,
	Author = {Wildey, T.},
	Month = {May},
        Note = {SAND2024-06292}, 
	Organization = {Sandia National Labs},
	Title = {{User/Reference Guide for MrHyDE - A framework for solving Multi-resolution Hybridized Differential Equations -- Version 1.0}},
	Year = 2024,
	}

@article{Csiszar1975,
author = {I. Csiszar},
title = {{$I$-Divergence Geometry of Probability Distributions and Minimization Problems}},
volume = {3},
journal = {The Annals of Probability},
number = {1},
publisher = {Institute of Mathematical Statistics},
pages = {146 -- 158},
year = {1975},
doi = {10.1214/aop/1176996454}
}

@article{Ruschendorf1995,
author = {L. Ruschendorf},
title = {{Convergence of the Iterative Proportional Fitting Procedure}},
volume = {23},
journal = {The Annals of Statistics},
number = {4},
publisher = {Institute of Mathematical Statistics},
pages = {1160 -- 1174},
year = {1995},
doi = {10.1214/aos/1176324703}
}

@article{alter_min,
author = {Gunawardana, A. and Byrne, W.},
title = {Convergence Theorems for Generalized Alternating Minimization Procedures},
year = {2005},
issue_date = {12/1/2005},
publisher = {JMLR.org},
volume = {6},
issn = {1532-4435},
journal = {J. Mach. Learn. Res.},
month = dec,
pages = {2049–2073},
numpages = {25}
}

@article{csiszar1984,
  title={Information geometry and alternating minimization procedures},
  author={Csisz\'ar, I. and Tusn\'ady, G.},
  journal={Statistics and Decisions},
  year={1984},
  volume={Supplemental Issue Number 1},
  pages={205-237}
}

@article{changandpollard1997,
author = {Chang, J.T. and Pollard, D.},
title = {Conditioning as disintegration},
journal = {Statistica Neerlandica},
volume = {51},
number = {3},
pages = {287-317},
doi = {https://doi.org/10.1111/1467-9574.00056},
eprint = {https://onlinelibrary.wiley.com/doi/pdf/10.1111/1467-9574.00056},
year = {1997}
}

@article{Guiasu1985,
  author    = {S. Guiasu and A. Shenitzer},
  title     = {The principle of maximum entropy},
  journal   = {The Mathematical Intelligencer},
  year      = {1985},
  volume    = {7},
  number    = {1},
  pages     = {42--48},
  doi       = {10.1007/BF03023004},
  issn      = {0343-6993}
}

@book{polyanskiy2022information,
  title={Information Theory: From Coding to Learning},
  author={Polyanskiy, Y. and Wu, Y.},
  note={Draft of October 20, 2022},
  publisher={Cambridge University Press},
  year={2022},
}

@article{Marino2020,
  title = {{An Optimal Transport Approach for the Schr\"{o}dinger Bridge Problem and Convergence of Sinkhorn Algorithm}},
  volume = {85},
  ISSN = {1573-7691},
  DOI = {10.1007/s10915-020-01325-7},
  number = {2},
  journal = {Journal of Scientific Computing},
  publisher = {Springer Science and Business Media LLC},
  author = {Marino,  S.D. and Gerolin,  A.},
  year = {2020},
  month = oct 
}
\end{document}